\documentclass[10pt]{amsart}
\usepackage{amsmath,amsthm,amstext}
\usepackage{amsfonts,amssymb}
\usepackage{mathtools}
\usepackage[mathscr]{eucal}
\usepackage[]{hyperref}
\usepackage{setspace,url}
\usepackage{tikz}
\usepackage{caption}
\usepackage{subcaption}

\headheight=8.2pt \topmargin=0pt
\textheight=640pt \textwidth=450pt
\oddsidemargin=10pt \evensidemargin=10pt

\newtheorem{lemma}{Lemma}[section]

\newtheorem{proposition}{Proposition}[section]
\newtheorem{remark}{Remark}[section]

\numberwithin{equation}{section}

\renewcommand{\H}{\mathcal{H}}

\newcommand{\D}{\mathcal{D}}
\renewcommand{\P}{\mathcal{P}}


\newcommand{\calO}{\ensuremath{\mathcal{O}}} 


\newcommand{\sgn}[1]{{\rm sgn} (#1)}


%
{\begin{trivlist} \item[]{\textbf{Proof} }}%
{\hspace*{\fill}$\rule{.3\baselineskip}{.35\baselineskip}$\end{trivlist}}

\begin{document}

\title[A Hamiltonian Dysthe equation for hydroelastic waves]
{A Hamiltonian Dysthe equation for hydroelastic waves in a compressed ice sheet}
\author{Philippe Guyenne}
\author{Adilbek Kairzhan}
\author{Catherine Sulem}
\address[Guyenne]{Department of Mathematical Sciences, University of Delaware, Newark, DE 19716, USA.} \email{guyenne@udel.edu}
\address[Kairzhan]{Department of Mathematics, Nazarbayev University, 010000, Astana, Kazakhstan.} 
\email{akairzhan@nu.edu.kz}
\address[Sulem]{Department of Mathematics, University of Toronto, Toronto, Ontario M5S 2E4, Canada} 
\email{sulem@math.utoronto.ca}





\begin{abstract}
Nonlinear hydroelastic waves along a compressed ice sheet lying on top of a two-dimensional fluid of infinite depth are investigated.
Based on a Hamiltonian formulation of this problem and by applying techniques from Hamiltonian perturbation theory,
a Hamiltonian Dysthe equation is derived for the slowly varying envelope of modulated wavetrains.
This derivation is further complicated here by the presence of cubic resonances for which a detailed analysis is given.
A Birkhoff normal form transformation is introduced to eliminate non-resonant triads while accommodating resonant ones.
It also provides a non-perturbative scheme to reconstruct the ice-sheet deformation from the wave envelope.
Linear predictions on the modulational instability of Stokes waves in sea ice are established,
and implications for the existence of solitary wavepackets are discussed for a range of values of ice compression relative to ice bending.
This Dysthe equation is solved numerically to test these predictions.
Its numerical solutions are compared to direct simulations of the full Euler system, and very good agreement is observed.
\end{abstract}

\maketitle


\section{Introduction}
This paper is devoted to the study of hydroelastic waves describing the deformations of an ice sheet floating on water,
a problem of importance in the polar regions.
It is part of a large class of hydroelasticity problems concerning the interaction between deformable bodies and a surrounding fluid,
and it has many engineering and industrial applications. 
A major challenge concerns modeling the mutual interactions between sea ice and water waves.
On one hand, the presence of sea ice affects the wave dynamics in various different ways.
On the other hand, waves can deform the sea ice, move it vertically and horizontally, and possibly break it up.
There is a large literature on linear models, valid for small-amplitude waves and small ice deflections,
which have focused on quantifying wave attenuation due to sea ice via scattering or other dissipative processes \cite{CGG19,S20}.
In this framework, a multitude of configurations have been considered, including a continuous ice cover
or a fragmented ice cover made of multiple discrete floes with possibly different characteristics.

However, intense wave-in-ice events have been reported and their analysis suggests that linear theory is not sufficient
to explain these observations \cite{M03}.
It is also expected that, in the context of climate change, the proliferation of open water (or more compliant sea ice)
in the polar regions will promote wave growth with larger amplitudes and stronger nonlinear effects.
Motivated in part by these considerations, nonlinear theory has drawn increasing attention in recent years,
with an emphasis on describing nonlinear ice deformations subject to water wave excitation.
Such studies have typically represented the ice cover as a continuous elastic plate of infinite extent,
with various effects depending on the complexity of the elasticity model. 
Their results range from direct numerical simulations of the full nonlinear equations
to weakly nonlinear predictions in asymptotic scaling regimes of interest,
for freely evolving wave solutions or generated by a load moving on the ice \cite{DKP19}.

We are interested here in the modulational regime where approximate solutions are sought 
in the form of slow modulations of near-monochromatic waves.
In this case, perturbation calculations typically yield the nonlinear Schr\"odinger (NLS) equation 
which governs the nonlinear dispersive behavior of the slowly varying wave envelope at leading order.
The NLS equation is a canonical reduced model that arises in many areas of nonlinear science.
Aside from its relative simplicity, its popularity owes much to its many mathematical properties,
including a Hamiltonian formulation and exact traveling wave solutions such as envelope solitons.

Recent findings from both field observations and direct numerical simulations support the relevance of this modulational regime
for wave-ice interactions in the ocean \cite{XG23}.
In particular, several sets of measurements from the Arctic Ocean have found that groupiness is a common trait
of the wave field under open-water and ice-covered conditions \cite{GMT21}.
These observations even suggest that the group structure is enhanced when waves interact with sea ice.
Wave groups with their inherent nonlinearity may be prone to wave focusing due to modulational instability
and may produce unusually large amplitudes \cite{CRMB15}, 
which has implications for ice breaking and ice decline as well as for safety of ship navigation in the polar regions.
Furthermore, this modulational approximation is of interest to the related problem where hydroelastic waves
are induced by a moving load, having in mind e.g. frozen lakes that are used in winter for roads and aircraft runways.
Experiments in Lake Saroma by \cite{T85} showed that a ski-doo snowmobile traveling at a certain speed 
generates localized disturbances of appreciable amplitude, which bear resemblance to solitary wavepackets or envelope solitons.

For this hydroelastic problem, a body of work using modulation theory has developed NLS models in a variety of configurations.
Pioneering results have been obtained by \cite{LM88} and \cite{MS92},
with the former authors considering nonlinear effects from ice bending, compression and inertia
based on Kirchhoff--Love (KL) formulation of a thin elastic plate, while the latter authors incorporated linearized versions of these effects (without inertia).
Nonlinear models based on KL theory have been widely used in both mathematical and numerical investigations.
Subsequent examples include \cite{PD02} who examined the response of a floating ice sheet to a moving load
and derived a steady form of the NLS equation with a forcing term.
They showed that solitary wavepackets exist for certain ranges of parameter values.
\cite{MV-BW11} performed direct numerical simulations of the forced and unforced wave dynamics,
with a focus on solutions having near-minimum phase speeds.
They also produced asymptotic results based on a time-dependent NLS equation that was found to be of defocusing type 
at this minimum (i.e. for small-amplitude forcing) and was corroborated by the numerics.
More recently, \cite{SS22} proposed another NLS equation with a more elaborate elasticity model
and identified in detail the domains of modulational instability for a broad range of physical parameters.
Unlike \cite{MV-BW11} or \cite{PD02}, they adopted an extended version of the KL representation for ice bending
and also included effects from ice compression and inertia.
A similar parametric analysis was conducted by \cite{Hartmann20} using the NLS equation of \cite{LM88}.

Alternatively, an important advance has been made by \cite{PT11} who devised a thin-plate formulation
based on the special Cosserat theory of hyperelastic shells, coupled with nonlinear potential-flow theory for the fluid dynamics.
A distinctive feature of their model as compared to KL is a more nonlinear dependence of the bending force exerted by
the ice cover on the water surface, together with the fact that it has a conservative form.
This allows such a model to be framed within the classical Hamiltonian formulation of the water wave problem by \cite{Z68},
providing a generalization to nonlinear hydroelastic waves.
A number of subsequent studies have adopted Cosserat theory, including \cite{GP12,GP14}
and \cite{MW13} who conducted a weakly nonlinear analysis in the modulational regime
and recovered earlier predictions from KL \cite{MV-BW11} on the non-existence of small-amplitude wavepackets 
(i.e. a defocusing NLS equation at the minimum phase speed).

Beyond the NLS equation in this asymptotic limit, the next-order approximation as originally proposed by \cite{D79}
for surface gravity waves on deep water, has also drawn much attention and has since been extended to other settings.
This so-called Dysthe equation has been shown to compare better with laboratory experiments and direct numerical simulations
than the NLS equation does for larger wave steepnesses.
For example, it can capture the asymmetry of evolving wavepackets, whereas the NLS equation is unable to do so \cite{GKSX21}.
Another effect arising at this higher level of approximation is the wave-induced mean flow 
which has an influence on the stability of finite-amplitude waves.
However, unlike the NLS equation, earlier versions of the Dysthe equation lack a Hamiltonian structure
that would comply with the primitive equations (i.e. the Euler system).
Because it is desirable that important structural properties such as energy conservation be preserved for various reasons,
recent effort has been devoted to establishing a Hamiltonian version of the Dysthe equation.
In particular, \cite{CGS21}, \cite{GKS22-siam} and \cite{GKS22-jfm}
developed a systematic approach to derive Hamiltonian Dysthe equations for water waves with or without a shear current 
by applying techniques from Hamiltonian perturbation theory, which involve reduction to normal form, 
homogenization and other canonical transformations.
Such a reduction is achieved by eliminating non-resonant triads according to the linear dispersion relation.
As a byproduct of this approach, the normal form transformation offers a non-perturbative procedure to reconstruct
the surface elevation from the wave envelope, via the solution of an auxiliary Hamiltonian system of evolution equations.
This contrasts with more standard techniques like the method of multiple scales
where the reconstruction is implemented perturbatively via a Stokes expansion.
For hydroelastic waves, we are not aware of any previous report on a (Hamiltonian or non-Hamiltonian) Dysthe equation from the literature.

Pursuing this line of inquiry, the present work is an extension of \cite{GP12} in several important directions.
The starting point is a Hamiltonian formulation for nonlinear potential flow,
coupled with a nonlinear representation of the ice cover based on Cosserat theory \cite{PT11}. 
Our viewpoint is motivated in part by the significance of Zakharov's Hamiltonian formulation which has formed the theoretical basis
for a countless number of results on nonlinear water waves ranging from rigorous mathematical analysis to operational wave forecasting.
The Dirichlet--Neumann operator is introduced to accomplish the reduction to a lower-dimensional system in terms of surface variables,
and its Taylor series expansion is exploited to carry out the perturbation calculations.
We focus on the two-dimensional problem of hydroelastic waves on water of infinite depth.
In this framework, our new contributions include:

\begin{enumerate}

\item Consideration of nonlinear models for both ice bending and compression,
which is a refinement over previous studies based on simpler elasticity models \cite{GP12,MS92,MV-BW11,MW13}.
The competition between ice bending and compression can affect the focusing/defocusing nature
of this modulational regime, with implications on the existence of solitary wavepackets.

\item Derivation of a Dysthe equation for the wave envelope, with a well-defined Hamiltonian structure and a conserved energy.
This follows from a Birkhoff normal form transformation which is given by the explicit solution of a cohomological relation,
involving an auxiliary system of integro-differential equations in the Fourier space.
An additional difficulty here is the presence of resonant cubic terms due to the more complicated dispersion relation.

\item Development of a splitting scheme in the Fourier space to handle these resonant triads.
Their detailed analysis leads to corrections in the normal form transformation as well as in the envelope equation,
especially for the mean-flow term.
Such corrections are absent in the pure gravity case \cite{CGS21,GKS22-siam} and differ fundamentally from previous adjustments 
to the Dysthe equation in similar situations such as gravity-capillary waves \cite{H85}.

\item Linear analysis of the modulational stability of Stokes waves in sea ice
and validation against numerical solutions of the Dysthe equation under these conditions.
A comparison with NLS predictions and with direct numerical simulations of the full Euler system is also presented.
For this purpose, the surface reconstruction is accomplished in a non-perturbative manner by inverting the normal form transformation.

\end{enumerate}

The remainder of this paper is organized as follows.
The hydroelastic problem under consideration is described in Section 2,
including the Hamiltonian formulation in terms of the Dirichlet--Neumann operator,
the Taylor expansion for small-amplitude waves, and the linear dispersion relation.
Section 3 examines the issue of cubic resonances and proposes a treatment in this theoretical setting.
Section 4 presents the basic tools from Hamiltonian perturbation theory,
focusing on the development of the Birkhhoff normal form transformation 
to deal with resonant and non-resonant triads.
The modulational Ansatz for weakly nonlinear quasi-monochromatic waves is introduced in Section 5,
and the Dysthe equation with its associated Hamiltonian is derived in Section 6.
Finally, Section 7 gives an analytical prediction on modulational instability,
which is tested against numerical solutions of the Dysthe equation.
This model's performance is also assessed relative to NLS computations and direct simulations of the full Euler system.
Results are discussed for a range of ice and wave parameters.

\section{Formulation of the problem} \label{section:formulation}

\subsection{Equations of motion}

We consider nonlinear hydroelastic waves propagating along an elastic sheet (e.g. ocean waves in sea ice) on top of a two-dimensional ideal fluid of infinite depth. In the thin-plate approximation, the ice sheet is assumed to coincide with the fluid surface $\{ y = \eta(x, t)\}$ and to bend in unison with it. 
The fluid domain is then given by $S(\eta) := \{ (x,y): ~x\in \mathbb{R}, ~ -\infty < y <\eta(x,t) \}.$ Assuming that the fluid is incompressible, inviscid and irrotational, it is described by a potential flow such that the velocity field $\mathbf{u}(x,y,t) = \nabla \varphi$ satisfies 
\begin{equation*}
\nabla^2 \varphi = 0 ,
\end{equation*}
in the fluid domain $S(\eta, t)$, where the symbol $\nabla$ denotes the spatial gradient $(\partial_x, \partial_y)$. 

On the surface $y = \eta(x,t)$, we impose the standard kinematic condition  
\begin{equation}
\label{kinematic-condition}
\partial_t \eta = \partial_y \varphi - (\partial_x \eta) (\partial_x \varphi), 
\end{equation}
as well as  the dynamic boundary condition
\begin{equation}
\label{dynamic-conditions}
\partial_t \varphi = -g\eta - \frac{1}{2} \big(\partial_x^2 \varphi + \partial_y^2 \varphi \big) -  
\D \Big( \partial_{s}^2 \kappa + \frac{1}{2} \kappa^3 \Big) - \P \kappa . 
\end{equation}

This condition is derived from the Bernoulli balance of forces  following \cite{PT11} for the interaction between an elastic ice sheet and water beneath it, while the compression term is proportional to the curvature of the fluid-ice interface.
Here, $g$ is the acceleration due to gravity, $\D$ is the coefficient of flexural rigidity, $\P$ is the coefficient of ice compression, $\kappa$ is the curvature of the fluid-ice interface caused by the plate deflection and $s$ is the arclength along this interface. 
The curvature $\kappa$, in terms of $\eta$, is given by
\begin{equation}
\label{curvature}
\kappa = \frac{\partial_x^2 \eta}{\big( 1 + (\partial_x \eta)^2 \big)^{3/2}} ,
\end{equation}
so that the part in \eqref{dynamic-conditions} that describes the bending of the ice sheet becomes 
{\small \begin{equation}
\label{curvature-relation}
\begin{aligned}
\partial_s^2 \kappa + \frac{1}{2} \kappa^3 = \frac{1}{\sqrt{1+(\partial_x \eta)^2}} \partial_x \Bigg( \frac{1}{\sqrt{1+(\partial_x \eta)^2}} \partial_x \Bigg( \frac{\partial_x^2 \eta}{\big( 1 + (\partial_x \eta)^2 \big)^{3/2}} \Bigg) \Bigg) + \frac{1}{2} \Bigg( \frac{\partial_x^2 \eta}{\big( 1 + (\partial_x \eta)^2 \big)^{3/2}} \Bigg)^3 .
\end{aligned}
\end{equation}}

The boundary condition at the bottom reads
\begin{equation}
\label{condition-at-infinity}
|\nabla \varphi| \to 0 \quad \text{as} \quad y \to -\infty .
\end{equation}


\subsection{Hamiltonian formulation}

Following \cite{Z68} and \cite{CS93}, the system of equations \eqref{kinematic-condition}--\eqref{condition-at-infinity} has a Hamiltonian formulation in terms of the variables $(\eta, \xi)$, where $\xi(x,t) = \varphi (x, \eta(x,t), t)$ denotes the trace of the fluid velocity potential evaluated at the surface \cite{GP12}. 

Indeed, introducing the Dirichlet--Neumann operator (DNO) for the fluid domain, which associates to the Dirichlet data $\xi$ on $y=\eta(x,t)$ the normal derivative of $\varphi$ at the surface with a normalizing factor, namely
\begin{equation}
\label{dno-definition}
G(\eta): \xi \mapsto \sqrt{1+|\partial_x \eta|^2} \, \partial_{\text{n}} \varphi \big|_{y=\eta},
\end{equation}
the equations of motion \eqref{kinematic-condition}--\eqref{dynamic-conditions} are equivalent to the following canonical Hamiltonian system
\begin{equation}
\label{Hamiltonian-formulation-spatial}
\partial_t \begin{pmatrix}
\eta \\ \xi
\end{pmatrix} = \begin{pmatrix}
0 & 1 \\ -1 & 0
\end{pmatrix} 
\begin{pmatrix}
\partial_\eta \H \\ \partial_\xi \H
\end{pmatrix}, 
\end{equation}
where the Hamiltonian $\H(\eta, \xi)$ is the total energy 
\begin{equation}
\label{Hamiltonian}
\begin{aligned}
\H (\eta, \xi) = \frac{1}{2} \int_\mathbb{R}  \xi G(\eta) \xi \, dx + \frac{1}{2} \int_\mathbb{R} \Bigg(
g\eta^2 + \D \frac{(\partial_x^2 \eta)^2}{\big( 1+ (\partial_x \eta)^2 \big)^{5/2}} - 2\P \Big( \sqrt{1+(\partial_x \eta)^2} -1 \Big) \Bigg) dx.
\end{aligned}
\end{equation}

The first integral is the kinetic energy, while the second integral is the potential energy due to gravity, bending (or rigidity) and compression.
The Hamiltonian $\H$, together with the momentum 
\begin{equation}
\label{momentum}
I = \int_\mathbb{R} \eta \, (\partial_x \xi) dx,
\end{equation}
and the volume
\begin{equation}
\label{V-invariant}
 V = \int_\mathbb{R} \eta \, dx,
 \end{equation}
 are invariants of motion.

In Fourier variables, the system \eqref{Hamiltonian-formulation-spatial} preserves its canonical form. Indeed, denoting the Fourier transform of $f(x)$ by $\hat{f_k} = (1/\sqrt{2\pi}) \int_\mathbb{R} e^{-ikx} f(x) dx$, we have 
\begin{equation}
\label{Hamiltonian-formulation-fourier}
\partial_t \begin{pmatrix}
\hat \eta_{-k} \\ 
\hat \xi_{-k}
\end{pmatrix} = \begin{pmatrix}
0 & 1 \\ -1 & 0
\end{pmatrix} 
\begin{pmatrix}
\partial_{\hat \eta_k} \H \\ \partial_{\hat \xi_k} \H
\end{pmatrix},
\end{equation}
where we have used that $(\hat \eta_{-k}, \hat \xi_{-k}) = (\overline{\hat \eta}_k, \overline{\hat \xi}_k)$ since $\eta(x)$ and $\xi(x)$ are real-valued functions. Due to the conservation of volume \eqref{V-invariant}, we can choose $\hat \eta_0 = 0.$ In the following, we drop the hats when denoting Fourier modes if there is no confusion.

\subsection{Non-dimensionalization} 

The ice parameters are defined by 
$$
\D = \frac{\sigma}{\rho}, \quad  \sigma = \frac{Eh^3}{12(1-\nu^2)},
 \quad \P = \frac{Ph}{\rho}, 
$$
where $\rho$ is the density of the underlying fluid, $h$ is the thickness of the ice sheet, $\nu$ is Poisson's ratio for ice, $E$ is Young's modulus and $P$ is the compressive stress.
Numerical values of these physical parameters are listed in Table 1 of \cite{PD02} for two sets of experimental data.
Introducing the characteristic length and time scales
\begin{equation*}
\ell = \left(\frac{\sigma}{\rho g} \right)^{1/4} = \left( \frac{\D}{g} \right)^{1/4}, \quad \tau = \left(\frac{\sigma}{\rho g^5} \right)^{1/8} = \left( \frac{\D}{g^5} \right)^{1/8},
\end{equation*}
respectively \cite{GP12,XG23}, the dimensionless equations of motion follow \eqref{Hamiltonian-formulation-spatial} with Hamiltonian \eqref{Hamiltonian} modulo the change
\begin{equation*}
g \to 1, \quad \D \to 1, \quad \P \to \frac{Ph}{\sqrt{\sigma \rho g}} = \frac{\P}{\sqrt{g \D}},
\end{equation*}
where the third dimensionless parameter measures the relative importance of compression to gravity and rigidity. 
In typical physical situations, it is of order 1 (see e.g. \cite{LM88} for realistic values of physical parameters and a discussion by \cite{SS22}).

For convenience, we will use the same notations hereafter but it is understood that all variables and parameters are now dimensionless
according to this choice of non-dimensionalization.


\subsection{Taylor expansion of the Hamiltonian near equilibrium} 

It is known that the DNO is analytic in $\eta$ \cite{CM85}, and admits a convergent Taylor series expansion
\begin{equation} \label{DNOseries}
G(\eta) = \sum_{m=0}^\infty G^{(m)}(\eta),
\end{equation}
about $\eta=0$. For each $m$, the term $G^{(m)}(\eta)$ is homogeneous of degree $m$ in $\eta$, and can be calculated explicitly via recursive relations \cite{CS93}. Denoting $D = -i \, \partial_x$, the first three terms are
\begin{equation}
\label{g-012-recursive}
\left\{ \begin{array}{l}
G^{(0)}(\eta)  = |D|, \\
G^{(1)}(\eta) = D \eta D - G^{(0)}\eta G^{(0)}, \\
G^{(2)}(\eta) = -\frac{1}{2} \left( |D|^2 \eta^2 G^{(0)}+ G^{(0)} \eta^2 |D|^2 - 2G^{(0)} \eta G^{(0)} \eta G^{(0)} \right).
\end{array} \right.
\end{equation}
This in turn  provides an expansion of
the Hamiltonian near the stationary solution $(\eta,\xi)  = (0,0)$:
\begin{equation*}
\begin{aligned}
\H (\eta, \xi) = \frac{1}{2} \int_\mathbb{R} \Big[ & \xi G^{(0)}(\eta) \xi + g\eta^2 + \D (\partial_x^2 \eta)^2 - \P (\partial_x \eta)^2 + \xi G^{(1)}(\eta) \xi\\
& + \xi G^{(2)} (\eta) \xi - \frac{5}{2} \D (\partial_x \eta)^2 (\partial_x^2 \eta)^2 + \frac{1}{4} \P (\partial_x \eta)^4 + \dots
\Big] dx.
\end{aligned}
\end{equation*}
In Fourier variables, the above expansion can be written as
\begin{equation}
\label{HH}
\H = \H^{(2)} + \H^{(3)}+ \H^{(4)} + \dots
\end{equation}
where each term $\H^{(m)}$ is homogeneous of degree $m$ in $\eta$ and $\xi$. In particular, we have
{\small
\begin{equation}
\label{H2-H3-in-fourier}
\begin{aligned}
& \H^{(2)} = \frac{1}{2} \int \Big( |k| |\xi_k|^2 + (g - \P k^2 + \D k^4) |\eta_k|^2 \Big) dk,\\[3pt]
& \H^{(3)} = - \frac{1}{2\sqrt{2\pi}} \int (k_1 k_3 + |k_1| |k_3|) \xi_1 \eta_2 \xi_3 \delta_{123} dk_{123},
\end{aligned}
\end{equation}
}
and 
\begin{equation}
\label{H4-in-fourier}
\begin{aligned}
\H^{(4)} =& ~ - \frac{1}{8\pi} \int |k_1| |k_4| (|k_1| + |k_4| - 2 |k_3+k_4|) \xi_1 \eta_2 \eta_3 \xi_4 \delta_{1234} dk_{1234}\\[3pt]
& ~ + \frac{1}{4\pi} \int \Big( \frac{5 \D}{2} k_1^2 k_2^2 k_3 k_4 {+} \frac{\P}{4} k_1 k_2 k_3 k_4 \Big) \eta_1 \eta_2 \eta_3 \eta_4 \delta_{1234} dk_{1234}, 
\end{aligned}
\end{equation}
where we have used the compact notations
$(\eta_j, \xi_j)= (\eta_{k_j}, \xi_{k_j})$, $dk_{1 \dots n} = dk_1 \dots dk_n$, and $\delta_{1 \dots n} = \delta(k_1+\dots+k_n)$, where $\delta(\cdot)$ is the Dirac distribution $\delta(k) = (1/2\pi) \int_\mathbb{R} e^{-ikx} dx$.
The domain of integration is now omitted from all integrals and is understood to be $\mathbb{R}$ for each $x_j$ or $k_j$.
Notice that the ice parameters $\P$ and $\D$ appear explicitly in the expressions for $\H^{(2)}$ and $\H^{(4)}$, but not in $\H^{(3)}$.

\subsection{  {Dispersion relation} }

The linearized system for \eqref{Hamiltonian-formulation-spatial} around $(\eta, \xi)=(0,0)$ is
\begin{equation}
\label{linearized}
\begin{aligned}
& \partial_t \eta = |D| \xi,\\
& \partial_t \xi = -g\eta - \P \partial_x^2 \eta - \D \partial_x^4 \eta. 
\end{aligned}
\end{equation}
For
\begin{equation}
\label{dispersion-relation}
\omega^2(k) = \omega_k^2 := |k| \left( g - \P k^2 + \D k^4 \right),
\end{equation}
Eq. \eqref{linearized} admits periodic plane-wave solutions 
{
\begin{equation}
\label{modes}
\eta(x,t) 
\propto e^{i(kx-\omega_k t)} \quad \text{and} \quad \xi(x,t) 
\propto e^{i(kx-\omega_k t)}.
\end{equation}
}
Equation \eqref{dispersion-relation} is known as the linear dispersion relation. 
By construction, for a particular choice of $\P, \D$ and $k$ the value of $\omega_k^2$ may be either positive or negative. When $\omega_k^2$ is positive, the solutions in \eqref{modes} are traveling waves since $\omega_k \in \mathbb{R}$. 
On the other hand, when $\omega_k^2<0$, one  has $\omega_k \in i \, \mathbb{R}$, and therefore, the solutions in \eqref{modes} represent either evanescent or growing waves due to the factor $e^{\pm |\omega_k| t}$ in the expressions of $\eta$ and $\xi$.

{For parameters $ \P, \D$ and $k$ with $\omega_k^2>0$,  it is convenient to introduce the complex symplectic coordinates which diagonalize the quadratic Hamiltonian $\H^{(2)}$ in \eqref{H2-H3-in-fourier}. In the Fourier space, these are given by }
\begin{equation}
\label{eta-xi-to-z-mapping}
\begin{pmatrix}
z_k \\ \bar z_{-k}
\end{pmatrix} = \frac{1}{\sqrt 2} \begin{pmatrix}
a_k & i \, a_k^{-1} \\
a_k & -i \, a_k^{-1} 
\end{pmatrix} \begin{pmatrix}
\eta_k \\ \xi_k
\end{pmatrix},
\end{equation}
where $a_k^2 := \omega_k/|k|$. The quadratic term $\H^{(2)}$ becomes
\begin{equation}
\label{H2-fourier-in-z}
\H^{(2)} = \int \omega_k |z_k|^2 dk,
\end{equation}
while the cubic term $\H^{(3)}$ takes the form
\begin{equation}
\label{H3-fourier-in-z}
\H^{(3)} = \frac{1}{8\sqrt{\pi}} \int ( { k}_1 { k}_3 +|{k}_1|{k}_3|) \frac{a_1 a_3}{a_2} (z_1-\bar z_{-1})(z_2+\bar z_{-2})(z_3-\bar z_{-3})  \delta_{123} d k_{123},
\end{equation}
where $z_{\pm j} := z_{\pm { k}_j}$ and $a_j := a_{{ k}_j}$. Moreover, since the mapping \eqref{eta-xi-to-z-mapping} is canonical \cite{GKS22-siam}, the full Hamiltonian system \eqref{Hamiltonian-formulation-fourier} becomes 
\begin{equation}
\label{z-physical-ham-system}
\partial_t \begin{pmatrix}
z_k \\ \bar z_{-k}
\end{pmatrix} = \begin{pmatrix}
0 & -i \\ i & 0
\end{pmatrix} \begin{pmatrix}
\partial_{z_k} \H \\ \partial_{\bar z_{-k}} \H
\end{pmatrix}.
\end{equation}
We note that the expressions \eqref{H2-fourier-in-z} and \eqref{H3-fourier-in-z} coincide in structure with the corresponding formulas for the quadratic and cubic Hamiltonians in the case of deep-water surface waves \cite{CS16}. This is because the parameters of ice rigidity and ice compression, appearing in our problem, are hidden in the definitions of  $\omega_k$ and $a_k$.

One finds the values of $\P$ when $\omega_k^2$ is positive for any $k$ by analyzing the quartic inequality
\begin{equation*}
\D k^4 - \P k^2 + g > 0. 
\end{equation*}
Recall that $g = 1$ and $\D = 1$ in dimensionless units.
For a given $\P$, one has $\omega_k^2 > 0$ for  small or large $k$. When $k$ takes moderate values,  $\omega_k^2$ could be negative. One can find the zeros of $\omega_k^2$ analytically but it is more convenient to plot the dispersion relation  $\omega_k^2$ as a function of $k$ for various values of the parameter $\P$ 
as shown in figure \ref{figure-omega2-k}. 
Note that $\omega_k^2$ remains always positive whenever $\P < 2$.  When $\P = 2$, $\omega_k^2 = 0$ for $k = 1$. 

We are interested in having  $\omega_k^2 > 0$ for all real values of $k$ since we are looking for traveling wave solutions, not evanescent or growing waves. Thus,  throughout the paper, we make the assumption 
\begin{equation}
\label{B-assumption}
0 \le \P < 2. 
\end{equation}

\section{Resonant triads}\label{section:resonant-triads}

\subsection{Existence of resonant triads}

We now address the question of existence of nonzero \textit{resonant triads} $(k_1, k_2,k_3)$ satisfying 
\begin{equation}
\label{resonance-condition-sum-of-k}
k_1 + k_2 + k_3 = 0, 
\end{equation}
and at least one of the equations 
\begin{equation}
\label{resonance-condition}
\omega_1 \pm \omega_2 \pm \omega_3 = 0,
\end{equation}
where $\omega_j$ stands for $\omega_{k_j}$.
To find all the possible triads satisfying \eqref{resonance-condition-sum-of-k}--\eqref{resonance-condition}, we consider the following function
\begin{equation}
\label{d123-def}
\begin{aligned}
d_{123} = d(k_1,k_2,k_3) := (\omega_1 + \omega_2 + \omega_3)(\omega_1 + \omega_2 - \omega_3)(\omega_1 - \omega_2 + \omega_3)(\omega_1 - \omega_2 - \omega_3).
\end{aligned}
\end{equation}
From the construction of $d_{123}$, we have that $(k_1,k_2,k_3)$ is a resonant triad if and only if it solves the system 
\begin{equation}
\label{d123=0}
\left\{ 
\begin{array}{l}
k_1+k_2+k_3 = 0, \\
d_{123}(k_1,k_2,k_3) = 0, \\
k_j \neq 0, \quad j = \{ 1,2,3 \}.
\end{array}
\right.
\end{equation}
The analogue of the system \eqref{d123=0} was studied in detail by \cite{CS16} for surface gravity water waves. In that case, due to the absence of physical parameters $\P$ and $\D$, the expression for $d_{123}$ is much simpler and the computations can be performed by hand. 
However, in the present case, it is difficult to solve \eqref{d123=0} explicitly due to the complexity of the dispersion relation \eqref{dispersion-relation} and the high degree of the polynomial function $d_{123}$. 

Furthermore, as shown in \cite{CS16}, the function $d_{123}$ appears together with the constraint 
\begin{equation}
\label{constraint-for-d123}
k_1 k_3+|k_1||k_3| \neq 0.
\end{equation}
Therefore, we provide 
estimates for $d_{123}$ under the constraint \eqref{constraint-for-d123} and the assumption $k_1+k_2+k_3=0$. 

\begin{lemma}
\label{lemma-d123}
Assuming $\sgn{k_1} = \sgn{k_3}$ and $k_1+k_2+k_3=0$, we have 
\begin{equation}
\label{d123-tilded123-relation}
\begin{aligned}
d_{123} = k_1k_3 \widetilde d(k_1, k_3),
\end{aligned}
\end{equation}
where 
\begin{equation}
\label{tilde-d123}
\begin{aligned}
\widetilde d (k_1, k_3) := & ~ k_1 k_3 (k_1 + k_3)^2 \left( 3\P -5 \D (k_1^2+k_1k_3+k_3^2) \right)^2\\
& ~ - 4 (g-\P k_1^2 + \D k_1^4)(g-\P k_3^2 + \D k_3^4). 
\end{aligned}
\end{equation}
The function $\widetilde d$ has the following symmetry properties: 
\begin{equation}
\label{tilde-d123-symmetry}
\widetilde d (k_1, k_3) = \widetilde d (k_3, k_1) \quad \text{and} \quad \widetilde d (k_1, k_3) = \widetilde d (-k_1, -k_3). 
\end{equation}
\end{lemma}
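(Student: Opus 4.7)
The plan is to exploit the standard factored form of $d_{123}$. Pairing the four factors two at a time gives
\[ d_{123} = \bigl((\omega_1+\omega_2)^2-\omega_3^2\bigr)\bigl((\omega_1-\omega_2)^2-\omega_3^2\bigr) = (\omega_1^2+\omega_3^2-\omega_2^2)^2 - 4\omega_1^2\omega_3^2, \]
which is the convenient form because each piece can be computed directly from $\omega_j^2 = |k_j|\,P(k_j)$ with $P(k) := g - \P k^2 + \D k^4$. Under the hypotheses, letting $\sigma := \sgn{k_1} = \sgn{k_3}$ and using $k_2 = -(k_1+k_3)$, I first note that $|k_2| = |k_1| + |k_3|$, so that
\[ \omega_1^2 + \omega_3^2 - \omega_2^2 = \sigma\,\bigl[k_1 P(k_1) + k_3 P(k_3) - (k_1+k_3)\,P(k_2)\bigr], \qquad 4\omega_1^2 \omega_3^2 = 4 k_1 k_3\, P(k_1) P(k_3). \]
Both combinations are independent of $\sigma$ once the first is squared, so the identity \eqref{d123-tilded123-relation} reduces to a purely polynomial computation in $k_1$ and $k_3$.

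Next, I will evaluate $F := k_1 P(k_1) + k_3 P(k_3) - (k_1+k_3) P(k_2)$ term by term. The constant ($g$) contribution cancels immediately because $k_1 + k_3 - (k_1+k_3) = 0$. For the cubic and quintic contributions I plan to use the elementary power-sum identities
\[ (k_1+k_3)^3 - k_1^3 - k_3^3 = 3 k_1 k_3 (k_1+k_3), \qquad (k_1+k_3)^5 - k_1^5 - k_3^5 = 5 k_1 k_3 (k_1+k_3)\bigl(k_1^2 + k_1 k_3 + k_3^2\bigr), \]
which yield the clean factorization
\[ F = k_1 k_3 (k_1+k_3)\bigl[3\P - 5\D (k_1^2 + k_1 k_3 + k_3^2)\bigr]. \]
Squaring $F$ and subtracting $4 k_1 k_3\, P(k_1) P(k_3)$ then produces an expression with an overall factor of $k_1 k_3$; the bracket that remains is exactly $\widetilde d(k_1, k_3)$ in \eqref{tilde-d123}.

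Finally, the symmetry properties \eqref{tilde-d123-symmetry} I will read off directly from the explicit formula. The swap $k_1 \leftrightarrow k_3$ leaves every constituent of $\widetilde d$ manifestly invariant. For the joint sign flip $(k_1,k_3) \to (-k_1,-k_3)$, the factors $k_1 k_3$, $(k_1+k_3)^2$, and $k_1^2 + k_1 k_3 + k_3^2$ are unchanged, and $P$ is an even function of $k$, so the whole expression is invariant. I do not anticipate any substantial obstacle: the argument is a deliberate algebraic manipulation, and the only step requiring genuine attention is recognizing the quintic factorization $(k_1+k_3)^5 - k_1^5 - k_3^5 = 5 k_1 k_3 (k_1+k_3)(k_1^2 + k_1 k_3 + k_3^2)$, without which the common factor of $k_1 k_3$ would not materialize cleanly.
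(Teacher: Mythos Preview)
Your proof is correct and follows essentially the same approach as the paper: both start from the factored form $d_{123} = (\omega_1^2+\omega_3^2-\omega_2^2)^2 - 4\omega_1^2\omega_3^2$, then use $|k_1|+|k_3|=|k_2|$ together with the cubic and quintic power-sum identities to extract the common factor $k_1k_3$, and finally note that the symmetries are immediate from the explicit formula. The only cosmetic difference is that you package the dispersion via $P(k)=g-\P k^2+\D k^4$ and an explicit sign $\sigma$, whereas the paper works directly with absolute values; the computations are otherwise identical.
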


\begin{proof}
The results are based on direct computations under the above assumptions. 
Below, we provide a short overview of steps. 

First, rewrite $d_{123}$ as 
\begin{equation}
\label{d123-alternative}
d_{123} = (\omega_1^2 + \omega_3^2 - \omega_2^2)^2 - 4 \omega_1^2 \omega_3^2.
\end{equation}
Observe that the second term on the right-hand side of \eqref{d123-alternative}  identifies with the second line in \eqref{tilde-d123}. For the first term on the right-hand side of \eqref{d123-alternative}, we write 
\begin{equation*}
\begin{aligned}
\omega_1^2 + \omega_3^2 - \omega_2^2 = g (|k_1|+ |k_3|-|k_2|) - \P (|k_1|^3+ |k_3|^3-|k_2|^3) + \D (|k_1|^5+ |k_3|^5-|k_2|^5). 
\end{aligned}
\end{equation*}
Given the assumptions, the first term vanishes since 
\begin{equation}
\label{linear-terms-kj}
|k_1|+ |k_3|-|k_2| = 0,
\end{equation}
and for the cubic terms, 
we have
\begin{equation*}
\label{cubic-terms}
\begin{aligned}
|k_1|^3+ |k_3|^3-|k_2|^3 = |k_2| (k_1^2 - k_1k_3 + k_3^2) - |k_2|^3 = -3 |k_2| k_1k_3. 
\end{aligned}
\end{equation*}
Similarly, for fifth-power terms, we have
\begin{equation*}
\label{fifth-terms}
\begin{aligned}
|k_1|^5+ |k_3|^5-|k_2|^5 = -5|k_2| k_1k_3 (k_1^2 + k_1k_3 + k_3^2).  
\end{aligned}
\end{equation*}
Substituting these expressions back into \eqref{d123-alternative}, we obtain \eqref{d123-tilded123-relation}.
The properties \eqref{tilde-d123-symmetry} are straightforward to verify.
\end{proof}

Based on Lemma \ref{lemma-d123}, finding resonant triads $(k_1, k_2, k_3)$ in \eqref{d123=0} under the constraint \eqref{constraint-for-d123} is equivalent to  finding the nonzero roots of the equation 
\begin{equation}
\label{tilde-d13=0}
\widetilde d (k_1, k_3) = 0 \quad \text{with} \quad \sgn{k_1} = \sgn{k_3},
\end{equation}
where $k_2 = -k_1-k_3$.  
Solving \eqref{tilde-d13=0} explicitly seems to be impossible due to the complicated structure of $\widetilde d(k_1,k_3)$ in \eqref{tilde-d123}. 
Therefore, we find solutions to \eqref{tilde-d13=0} numerically. 
Since $\widetilde d$ is invariant under sign change of variables, $\widetilde d (k_1, k_3) = \widetilde d (-k_1, -k_3)$, we  only concentrate on the positive roots of \eqref{tilde-d13=0} and the negative roots are found by symmetry.  
Setting $\P = 1$ for simplicity,  the solution curve $\mathcal{C}^+$ for positive roots of \eqref{tilde-d13=0} is displayed in figure \ref{figure-resonant-triads}.  A similar solution curve is present for other values of $\P < 2$. 
We note that the curve $\mathcal{C}^+$ is unbounded, has the  $k_1$- and $k_3$-axes as  asymptotes and never intersects these axes. The negative roots of \eqref{tilde-d13=0} are given by the solution curve $\mathcal{C}^-$, which is obtained from $\mathcal{C}^+$ by symmetry with respect to the origin. Combining these curves,
all nonzero roots of \eqref{tilde-d13=0} are given by 
\begin{equation}
\label{C-defn}
\mathcal{C} := \mathcal{C}^+ \cup \mathcal{C}^-. 
\end{equation}

In our analysis later, various integrals involve expressions where the function $\widetilde d(k_1, k_3)$ appears in the denominator. To avoid this ``small divisors'' issue, we will restrict the integral to regions away from the solution curve $\mathcal{C}$. 
To do so, we construct a neighborhood of $\mathcal{C}$, referred to as $\mathcal{C}_\mu$, for sufficiently small $\mu$, such that   
\begin{equation}
\label{d-tilde-bounded-away-from-0}
\begin{aligned}
|\widetilde d(x, y)| \geq \text{const}_\mu \quad \text{for} \quad \text{all } (x, y) \in \mathbb{R}^2 \backslash \mathcal{C}_\mu, 
\end{aligned}
\end{equation}
where $\text{const}_\mu$ is a constant depending on $\mu$ only. 
More precisely, at every point $(k_1, k_3) \in \mathcal{C}^+$, denote a normal vector to $\mathcal{C}^+$ by $\vec{n} (k_1, k_3)$. Then, consider the set of points in the $(k_1,k_3)$-plane given by 
\begin{equation}
\label{tube-C-plus}
\begin{aligned}
\left\{ (k_1, k_3) + \mu \, t \, \vec{n} (k_1,k_3): t \in [-1,1] \text{ and } (k_1, k_3) \in \mathcal{C}^+ \right\},
\end{aligned}
\end{equation}
which geometrically looks like a ``tube'' around the curve $\mathcal{C}^+$ with a width $\mu$. We then construct a neighborhood  of $\mathcal{C}^+$, denoted by $\mathcal{C}_\mu^+$, as 
\begin{equation}
\mathcal{C}_\mu^+ := \{ \text{the set } \eqref{tube-C-plus}\} \cap \{(x,y): x\geq 0, y\geq 0\}. 
\end{equation}
A sketch of $\mathcal{C}_\mu^+$ is shown in figure \ref{figure-resonant-triads} for $\P = 1$. The neighborhood $\mathcal{C}_\mu^-$ for the curve $\mathcal{C}^-$ is constructed similarly, and can be seen as the reflection of $\mathcal{C}_\mu^+$ with respect to the origin. Combining these sets, we get the neighborhood of the set $\mathcal{C}$ in \eqref{C-defn} as 
$\mathcal{C}_\mu := \mathcal{C}_\mu^+ \cup \mathcal{C}_\mu^-$. 

We now provide formal arguments to show that \eqref{d-tilde-bounded-away-from-0} is satisfied for the set $\mathcal{C}_\mu$ constructed above. 
It suffices to show that  \eqref{d-tilde-bounded-away-from-0} is satisfied on the boundary of $\mathcal{C}_\mu^+$ for small values of $k_1$, which corresponds to the far end of the neighborhood in figure \ref{figure-resonant-triads} located at the top left corner. Then, the validity of \eqref{d-tilde-bounded-away-from-0} for all $(x, y)$ outside $\mathcal{C}_\mu$ follows by continuity arguments and symmetry.

First, we note that, under  assumption \eqref{B-assumption}, the points $(k_1, k_3) \in \mathcal{C}$ satisfy
$k_3 \approx (4g/(25 \D k_1))^{1/3}$ whenever $k_1 \ll 1$. Indeed, from \eqref{tilde-d123}, it is clear that if $k_1 \ll 1$ then $k_3 \gg 1$. As a result, for $k_1 \ll 1$, we have 
\begin{equation}
\label{k_1-k_3-asymptotics}
0 = \widetilde d(k_1, k_3) \approx 25 \mathcal{D}^2 k_1 k_3^7 - 4 g \mathcal{D} k_3^4 \implies k_3 \approx \left(\frac{4g}{25 \mathcal{D} k_1} \right)^{1/3}. 
\end{equation}
Therefore, the normal vector of $\mathcal{C}^+$ for small values of $k_1$ is approximately given by 
\begin{equation*}
\vec{n} (k_1,k_3) = \frac{\left( k_1, 3k_1^{4/3} \left( \frac{25\D}{4g} \right)^{1/3} \right)}{\left\| k_1, 3k_1^{4/3} \left( \frac{25\D}{4g} \right)^{1/3} \right\|} \approx (1, 0). 
\end{equation*}
The points on the boundary of $\mathcal{C}_\mu^+$ with $k_1 \ll 1$ are  
\begin{equation*}
(0, k_3) \quad \text{and} \quad (k_1+ \mu, k_3),
\end{equation*}
where $k_3$ satisfies the estimate \eqref{k_1-k_3-asymptotics}. Applying simple algebraic steps to \eqref{tilde-d123}, it can be shown that the value of $\widetilde d (0, k_3)$ under the assumption \eqref{B-assumption} is bounded away from zero by 
\begin{equation*}
\left| \widetilde d (0, k_3) \right| \geq \left| \widetilde d (0, \sqrt{\P/(2\D)}) \right| = \frac{g}{\D} (4g\D - \P^2) >0.
\end{equation*}
To estimate the value of $\widetilde d(k_1 + \mu, k_3)$ we use the Taylor expansion 
\begin{equation}
\label{formal-expansion-tilde-d123}
\begin{aligned}
\widetilde d(k_1 + \mu, k_3) \approx \widetilde d(k_1, k_3) + \mu \, \partial_{k_1} \widetilde d(k_1, k_3) = \mu \, \partial_{k_1} \widetilde d(k_1, k_3). 
\end{aligned}
\end{equation}
Taking the derivative of \eqref{tilde-d123} with respect to $k_1$, we have 
\begin{equation*}
\begin{aligned}
\partial_{k_1} \widetilde d (k_1, k_3) = &~k_3 (3k_1+k_3) (k_1 + k_3) \left( 3\P -5 \D (k_1^2+k_1k_3+k_3^2) \right)^2 \\
& + 2 k_1 k_3 (k_1 + k_3)^2 \left( 3\P -5 \D (k_1^2+k_1k_3+k_3^2) \right) \left( -10 \D k_1 - 5\D k_3 \right)\\
& + 4 (2 \P k_1 - 4 \D k_1^3)(g-\P k_3^2 + \D k_3^4),
\end{aligned}
\end{equation*}
which, under \eqref{k_1-k_3-asymptotics}, is approximated by
\begin{equation*}
\begin{aligned}
\partial_{k_1} \widetilde d (k_1, k_3) \approx 25 \D^2 k_3^7,
\end{aligned}
\end{equation*}
whenever $k_1 \ll 1$. As a result, the estimate in \eqref{formal-expansion-tilde-d123} takes the form 
\begin{equation*}
\widetilde d(k_1 + \mu, k_3) \approx 25 \mu \D^2 k_3^7,
\end{equation*}
which is bounded away from zero as desired. 
Later in the analysis, the function $d_{123} $ will appear in the denominator and we will set $\mu = \varepsilon $, where $\varepsilon$ is the small parameter in the modulational regime.

In view of \eqref{d123=0} and \eqref{d123-tilded123-relation}, the set of resonant triads satisfying \eqref{resonance-condition-sum-of-k}--\eqref{resonance-condition} is given by
\begin{equation}
\label{resonance-set}
\mathcal{R} := \{(k_1, -k_1-k_3, k_3) \in \mathbb{R}^3: (k_1, k_3) \in \mathcal{C} \}.
\end{equation}
The neighborhood of $\mathcal{R}$ is defined by
\begin{equation}
\label{R-neighborhood}
\mathcal{N}_\mu := \{ (k_1, -k_1-k_3, k_3) \in \mathbb{R}^3: (k_1, k_3) \in \mathcal{C}_\mu\},
\end{equation}
as illustrated in figure \ref{figure-resonant-triads}. 
We also define a characteristic function $\chi_{\mathcal{N}_\mu} (k_1, k_2, k_3)$ supported in the neighborhood $\mathcal{N}_\mu$ as
\begin{equation}
\label{characteristic-fnct-defn}
\chi_{\mathcal{N}_\mu} (k_1, k_2, k_3) :=
\left\{ \begin{array}{l}
1, \quad \text{if } (k_1, k_2, k_3) \in \mathcal{N}_\mu, \\
0, \quad \text{otherwise}.
\end{array} \right.
\end{equation}

\subsection{Phase and group speeds}

Assuming $k > 0$ without loss of generality (since $\omega_k$ is an even function of $k$), 
the phase speed associated with the linear dispersion relation \eqref{dispersion-relation} is given by
\[
c(k) = \frac{\omega_k}{k} = \sqrt{\frac{g - {\P} k^2 + {\D} k^4}{k}},
\]
while the group speed reads
\[
c_g(k) = \partial_k \omega_k = \frac{g - 3 {\P} k^2 + 5 {\D} k^4}{2 \omega_k}.
\]
It can be easily shown that, if the phase speed $c$ has a local minimum at $k = k_{\min}$, then
the phase and group speeds coincide at this minimum.
The equation $c = c_g$ reduces to $g + {\P} k^2 - 3 {\D} k^4 = 0$ which is quadratic in $k^2$.
The only possible solution that is real and positive takes the form
\begin{equation} \label{kmin}
k_{\min} = \sqrt{\frac{{\P} + \sqrt{{\P}^2 + 12 g {\D}}}{6 {\D}}}.
\end{equation}
In the flexural-gravity case (${\P} = 0$), this solution yields the well-known value
$k_{\min} = (g/(3 {\D}))^{1/4}$ \cite{GP12}.
Figure \ref{coeff-graph} shows the phase and group speeds for values $\P = \{ 1, 2, 5 \}$.

\section{Transformation theory} \label{section:transf-theory}

The method of Hamiltonian transformation theory has been previously applied to deep-water irrotational gravity waves in two and three dimensions \cite{CGS21,GKS22-siam}, and waves with constant vorticity \cite{GKS22-jfm} to derive a Hamiltonian Dysthe equation for the envelope of the surface elevation. 
In all of these cases, the set of resonant triads, satisfying the analogue of \eqref{resonance-condition}, is either empty or naturally ruled out from the analysis. This is not the case in the present problem.

We recall that the Poisson bracket of two functionals $K(\eta, \xi)$ and $H(\eta, \xi)$ 
of real-valued functions $\eta$ and $\xi$ is defined  as
\begin{equation*}
\label{poisson-bracket}
\{K, H\} = \int \big( \partial_\eta H \partial_\xi K - \partial_\xi H \partial_\eta K \big) dx.
\end{equation*}
Assuming in addition that $K$ and $H$ are real-valued, the Poisson bracket takes the form 
\begin{equation}
\label{poisson-bracket-formula}
\begin{aligned}
\{K, H\} & = \int \big( \partial_{\eta_{{\rm k}_1}} H \partial_{\xi_{{\rm k}_2}} K - \partial_{\xi_{{\rm k}_1}} H \partial_{\eta_{{\rm k}_2}} K \big) \delta_{12} dk_{12}, \\
& = \frac{1}{i} \int \big( \partial_{z_{{\rm k}_1}} H \partial_{\bar z_{-{\rm k}_2}} K - \partial_{\bar z_{- {\rm k}_1}} H \partial_{z_{{\rm k}_2}} K \big) \delta_{12} dk_{12}.
\end{aligned}
\end{equation}

\subsection{Canonical transformation}

We first construct a  transformation that eliminates non-resonant terms from the cubic Hamiltonian \eqref{H3-fourier-in-z}. More precisely, we are  looking for a canonical transformation of the physical variables $(\eta,\xi)$
$$
\tau: w = \begin{pmatrix}
\eta \\ \xi
\end{pmatrix} \longmapsto w' =  \begin{pmatrix}
\eta' \\ \xi'
\end{pmatrix},
$$
defined in a neighborhood of the origin, such that the transformed Hamiltonian satisfies
$$
\H'(w') = \H(\tau^{-1}(w')), \quad 
\partial_t w' = J \, \nabla \H'(w'),
$$
and reduces to
$$
\H'(w') = \H^{(2)}(w') +  Z^{(3)} + Z^{(4)} + \ldots +  Z^{(m )} + R^{(m+1)},
$$
where $Z^{(j )}$ only consists of resonant terms  of order $j$ and $R^{(m+1)}$ is the remainder term at order $m$ \cite{CGS21b,CS16}. 
The transformation $\tau$ is obtained  as a Hamiltonian flow $\psi$ from ``time'' $s=-1 $ to ``time'' $s=0$ governed by
$$
\partial_s \psi = J \, \nabla K(\psi), \quad \psi(w')|_{s=0} = w' , \quad 
\psi(w')|_{s=-1} = w ,
$$
and associated to an auxiliary Hamiltonian $K$.
Such a transformation is canonical and preserves the Hamiltonian structure of the system. The Hamiltonian $\H'$ satisfies 
$\H'(w') = \H(\psi(w'))|_{s=-1}$ and its Taylor expansion around $s=0$ is
$$
\H'(w') = \H(\psi(w'))|_{s=0} - \frac{d \H}{ds}(\psi(w'))|_{s=0} + \frac{1}{2} \frac{d^2 \H}{ds^2}(\psi(w'))|_{s=0} - \ldots
$$
Abusing  notations, we  further  drop the primes and use $w = (\eta,\xi)^\top $ 
to denote the new variable $w'$.  Terms in this expansion can be expressed using Poisson brackets as 
\begin{equation*}
\begin{aligned}
\H(\psi(w))|_{s=0} & = \H(w), \\
\frac{d \H}{ds}(\psi(w))|_{s=0} & = \int \left( \partial_\eta \H \partial_s \eta + 
\partial_\xi \H \partial_s \xi \right) dx, \\    
& = \int \left( \partial_\eta \H \partial_\xi K - \partial_\xi \H \partial_\eta K \right) dx = \{K, \H\}(w),
\end{aligned}
\end{equation*}
and similar expressions  for higher-order $s$-derivatives. The Taylor expansion of $H'$ around $s=0$ now has the form 
$$
\H'(w) = \H(w) - \{K, \H\}(w) + \frac{1}{2} \{K, \{K, \H\}\}(w) - \ldots
$$
Substituting this transformation into  the expansion (\ref{HH}) of $\H$, we obtain
\begin{align*}
\H'(w) & = \H^{(2)}(w) + \H^{(3)}(w) + \ldots \\
& \quad - \{K,\H^{(2)} \} (w) -\{K, \H^{(3)}\} (w)  -\{K, \H^{(4)}\} (w) - \ldots \\
 & \quad + \frac{1}{2}\{K,\{K,\H^{(2)} \}\} (w) +\frac{1}{2} \{K,\{K,\H^{(3)}\}\} (w) + \ldots  
\end{align*} 
If $K$ is homogeneous of degree $m$ and $\H^{(n)}$ is homogeneous of degree $n$, then $\{K,\H^{(n)} \}$ is of degree $m+n-2$. 
Thus, if we construct an auxiliary Hamiltonian $K=K^{(3)}$ that is homogeneous of degree $3$ and satisfies the relation 
\begin{equation}\label{cohomological-relation}
   \H^{(3)}- \{K^{(3)},\H^{(2)} \}  = 0,
\end{equation}
we will have eliminated all cubic terms from the transformed Hamiltonian $\H'$.
We can repeat this process at each order.

\subsection{Third-order Birkhoff normal form}

We recall that the complex symplectic coordinates $z_{j}$ and $\bar{z}_{-j}$ diagonalize
the coadjoint operator
${\rm coad}_{\H^{(2)}} := \{\cdot, \H^{(2)}\}$, that is, the linear operation of taking Poisson brackets with
$H^{(2)}$ \cite{CS16,GKS22-siam}. When applying the operator to monomial terms such as 
$
\mathcal{I} := \int z_1 z_2 \bar z_{-3} \delta_{123} d{ k}_{123},
$
we have 
\begin{equation} \label{ad-H2}
\{ \mathcal{I}, \H^{(2)}\}  =  
i \int (\omega_1 + \omega_2 - \omega_3)   z_1 z_2 \bar z_{-3}  \delta_{123} d{k}_{123}.
\end{equation}

We use the diagonal property as in \eqref{ad-H2} to find the auxiliary Hamiltonian $K^{(3)}$ from \eqref{cohomological-relation}. The presence of resonant triads \eqref{resonance-set} does not allow us to eliminate $\H^{(3)}$ completely by virtue of \eqref{cohomological-relation}. Instead, we are only able to find $K^{(3)}$ such that 
\begin{equation}
\label{cohomol-eqn-nonresonant}
\{K^{(3)},\H^{(2)} \} = \H_{\rm NoRes}^{(3)},
\end{equation}
where $\H_{\rm NoRes}^{(3)}$ stands for the non-resonant part of the third-order Hamiltonian.  
Explicitly, we  define the non-resonant part of $\H^{(3)}$ as $\H_{\rm NoRes}^{(3)} := \H^{(3)} - \H_{\rm Res}^{(3)}$, 
where 
\begin{equation}
\label{H3-resonant}
\H_{\rm Res}^{(3)} = \frac{1}{8\sqrt{\pi}} \int \chi_{\mathcal{N}_\mu} (k_1, k_2, k_3) S_{123} (z_1 \overline{z}_{-2} z_3 + \overline{z}_{-1} {z}_{2} \overline{z}_{-3} )  \delta_{123} d{k}_{123},
\end{equation}
is a resonant part of \eqref{H3-fourier-in-z}, 
\begin{equation}
\label{S_123-defn}
S_{123} = S(k_1, k_2, k_3) := (k_1 k_3 + |k_1| |k_3|) \frac{a_1 a_3}{a_2},
\end{equation}
and $\chi_{\mathcal{N}_\mu}$ is the characteristic function defined in \eqref{characteristic-fnct-defn}.

\begin{proposition}
The cohomological equation \eqref{cohomol-eqn-nonresonant} has a unique solution $K^{(3)}$ which, in complex symplectic coordinates, is 
\begin{equation}
\begin{aligned}
\label{K3-fourier-z}
K^{(3)} = ~ \frac{1}{8i\sqrt{\pi}} \int &
S_{123} \Big[
\frac{z_1 z_2 z_3- \bar z_{-1} \bar z_{-2} \bar z_{-3} }{\omega_1 + \omega_2+ \omega_3} - 
2 \frac{z_1 z_2 \bar z_{-3}- \bar z_{-1} \bar z_{-2} z_3}{\omega_1+ \omega_2 -\omega_3} \\
&\qquad + \left(1-\chi_{\mathcal{N}_\mu} (k_1, k_2, k_3) \right)
\frac{ z_1\bar z_{-2}  z_3-  \bar z_{-1} z_2 \bar z_{-3} }{\omega_1 -\omega_2 +\omega_3}  \Big] \delta_{123} d{k}_{123}.
\end{aligned}
\end{equation} 
Alternatively, in the variables $(\eta,\xi)$, $K^{(3)}$ has the form
\begin{equation}
\label{K3-fourier-eta-xi}
\begin{aligned}
K^{(3)} = \frac{1}{\sqrt{2\pi}} \int & \left( P_{123}~ \eta_1 \eta_2 \xi_3 +  Q_{123}~ \eta_1 \xi_2 \eta_3 + R_{123}~  
\xi_1 \xi_2 \xi_3 \right) \delta_{123} d{k}_{123},
\end{aligned}
\end{equation}
where the  denominator $d_{123}$ is given in \eqref{d123-def}
and the coefficients are 
\begin{equation*}
\begin{aligned}
& P_{123} = \frac{1 + \sgn{k_1} \sgn{k_3}}{4 \widetilde d (k_1, k_3)} a_1^2 \left( 4 \omega_1 (\omega_1^2 - \omega_2^2 - \omega_3^2) - \chi_{\mathcal{N}_\mu} (k_1, k_2, k_3) \Pi_{123} \right), \\
& Q_{123} = \frac{1 + \sgn{k_1} \sgn{k_3}}{8 \widetilde d (k_1, k_3)} \left( \frac{a_1^2 a_3^2}{a_2^2} \right) \left( 8 \omega_1 \omega_2 \omega_3 + \chi_{\mathcal{N}_\mu} (k_1, k_2, k_3) \Pi_{123} \right), \\
& R_{123} = \frac{1 + \sgn{k_1} \sgn{k_3}}{8 \widetilde d (k_1, k_3)} \left( \frac{1}{a_2^2} \right) \left( 4 \omega_2 (\omega_1^2 - \omega_2^2 + \omega_3^2) - \chi_{\mathcal{N}_\mu} (k_1, k_2, k_3) \Pi_{123} \right),
\end{aligned}
\end{equation*}
with $\Pi_{123} := (\omega_1 + \omega_2 + \omega_3)(\omega_1 + \omega_2 - \omega_3) (\omega_1 - \omega_2 - \omega_3)$.
\end{proposition}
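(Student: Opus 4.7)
The plan is to solve the cohomological equation \eqref{cohomol-eqn-nonresonant} directly in the complex symplectic coordinates by exploiting the diagonal action \eqref{ad-H2} of $\text{coad}_{\H^{(2)}} := \{\cdot,\H^{(2)}\}$, and then to translate the resulting expression back into the physical variables $(\eta,\xi)$ via the inverse of \eqref{eta-xi-to-z-mapping}. I will take as ansatz the most general real cubic functional invariant under translations (i.e.\ supported on $\delta_{123}$), writing $K^{(3)}$ as an integral over $\delta_{123}\,dk_{123}$ of a linear combination of the eight monomials $z_1z_2z_3$, $z_1z_2\bar z_{-3}$, $z_1\bar z_{-2}z_3$, $\bar z_{-1}z_2z_3$, and their complex conjugates, with unknown symbols to be determined. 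Reality of $K^{(3)}$ pairs the eight monomials into four conjugate classes, so there are four independent symbols to solve for.

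First, I expand $\H^{(3)}$ from \eqref{H3-fourier-in-z} by distributing $(z_1-\bar z_{-1})(z_2+\bar z_{-2})(z_3-\bar z_{-3})$, producing the same eight monomials weighted by $S_{123}$ defined in \eqref{S_123-defn}. I then apply $\text{coad}_{\H^{(2)}}$ to the ansatz using \eqref{ad-H2}: each monomial $z_1^{\pm}z_2^{\pm}z_3^{\pm}$ is an eigenvector with eigenvalue $i(\pm\omega_1\pm\omega_2\pm\omega_3)$, so the cohomological equation decouples into four scalar equations, one for each conjugate class, obtained by matching symbols on both sides. Away from the zero set of the eigenvalues, these equations are inverted by straightforward division, yielding unique symbols. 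The symmetry $S_{123}=S(k_1,k_2,k_3)$ under $k_1\leftrightarrow k_3$ is used to symmetrize the mixed classes so that the relabeling of $k_1,k_2,k_3$ produces the factor $2$ in front of the $(z_1z_2\bar z_{-3}-\bar z_{-1}\bar z_{-2}z_3)$ term.

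Next, I account for resonances. By Lemma~\ref{lemma-d123} and the description of $\mathcal{R}$ in \eqref{resonance-set}, under $k_1+k_2+k_3=0$ and the constraint $\sgn{k_1}=\sgn{k_3}$ (forced by the prefactor $k_1k_3+|k_1||k_3|$ in $S_{123}$), the only eigenvalue among $\omega_1\pm\omega_2\pm\omega_3$ that can vanish on a nontrivial set is $\omega_1-\omega_2+\omega_3$. Consequently, only the class $z_1\bar z_{-2}z_3-\bar z_{-1}z_2\bar z_{-3}$ carries a small divisor, which must be suppressed inside $\mathcal{N}_\mu$ via the cutoff $1-\chi_{\mathcal{N}_\mu}$. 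The remaining denominators $\omega_1+\omega_2+\omega_3$ and $\omega_1+\omega_2-\omega_3$ are uniformly bounded away from zero under \eqref{B-assumption}, so those classes are solved globally. The unabsorbed piece defines $\H^{(3)}_{\text{Res}}$ as in \eqref{H3-resonant}, and the cohomological equation is solved uniquely in every eigenspace. This gives \eqref{K3-fourier-z}.

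Finally, to pass to $(\eta,\xi)$, I substitute the inverse transform $z_k=(a_k\eta_k+ia_k^{-1}\xi_k)/\sqrt 2$ and $\bar z_{-k}=(a_k\eta_k-ia_k^{-1}\xi_k)/\sqrt 2$ into \eqref{K3-fourier-z} and expand. After collecting like cubic monomials in $(\eta,\xi)$, symmetrizing by $k_1\leftrightarrow k_3$ (which leaves $S_{123}$, $\widetilde d$, and $\delta_{123}$ invariant), and reducing to the three independent tensor structures $\eta_1\eta_2\xi_3$, $\eta_1\xi_2\eta_3$, and $\xi_1\xi_2\xi_3$, I reduce the three distinct denominators to a common one by the identity
\begin{equation*}
(\omega_1+\omega_2+\omega_3)(\omega_1+\omega_2-\omega_3)(\omega_1-\omega_2+\omega_3)(\omega_1-\omega_2-\omega_3)=d_{123}=k_1k_3\,\widetilde d(k_1,k_3)
\end{equation*}
from Lemma~\ref{lemma-d123}. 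The factor $1+\sgn{k_1}\sgn{k_3}$ in each of $P_{123}$, $Q_{123}$, $R_{123}$ comes from the prefactor $k_1k_3+|k_1||k_3|$ of $S_{123}$, while the polynomial numerators $4\omega_1(\omega_1^2-\omega_2^2-\omega_3^2)$, $8\omega_1\omega_2\omega_3$, $4\omega_2(\omega_1^2-\omega_2^2+\omega_3^2)$ arise from combining products of the three ``non-resonant'' factors $\omega_1\pm\omega_2\pm\omega_3$ across the three denominators. The correction term $\mp\chi_{\mathcal{N}_\mu}\Pi_{123}$ in each coefficient precisely tracks the absence of the $(\omega_1-\omega_2+\omega_3)^{-1}$ piece inside $\mathcal{N}_\mu$.

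The main obstacle is the last step: the algebraic bookkeeping required to put three different rational expressions over the common denominator $\widetilde d(k_1,k_3)$ while respecting both the symmetrization $k_1\leftrightarrow k_3$ and the resonant cutoff, and then recognizing the resulting numerators as the compact polynomials stated in the proposition. The use of $k_1+k_2+k_3=0$ to simplify expressions in $\omega_j^2$ (as in the proof of Lemma~\ref{lemma-d123}) is essential here, since it collapses what would otherwise be an unwieldy polynomial identity in three variables to the stated form.
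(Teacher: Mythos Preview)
Your proposal is correct and takes essentially the same approach as the paper. The paper's own proof is extremely terse---it simply states that \eqref{K3-fourier-z} follows by ``applying the diagonal property \eqref{ad-H2} to the non-resonant part of the cubic Hamiltonian'' and that \eqref{K3-fourier-eta-xi} follows by ``substituting the relation \eqref{eta-xi-to-z-mapping}''---and your plan is a faithful expansion of exactly those two steps, including the eigenspace-by-eigenspace inversion, the use of the $k_1\leftrightarrow k_3$ symmetry of $S_{123}$ to produce the factor $2$, the cutoff on the one resonant class, and the reduction to the common denominator $\widetilde d(k_1,k_3)$ via Lemma~\ref{lemma-d123}.
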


\begin{proof}
The derivations of \eqref{K3-fourier-z} and \eqref{K3-fourier-eta-xi} are based on straightforward computations. The expression \eqref{K3-fourier-z} is obtained by applying the diagonal property \eqref{ad-H2} to the non-resonant part of the cubic Hamiltonian, $\H_{\rm NoRes}^{(3)}$. The expression \eqref{K3-fourier-eta-xi} is derived from \eqref{K3-fourier-z} by substituting the relation \eqref{eta-xi-to-z-mapping}. 
\end{proof}

We point out that the coefficients $P_{123}, Q_{123}$ and $R_{123}$ are well defined in the neighborhood of the resonant triads \eqref{resonance-set}. This is due to the  construction of $K^{(3)}$ in \eqref{K3-fourier-z} which avoids the resonant triads that make the denominator $\omega_1-\omega_2 + \omega_3$ equal to $0$.  This is the principal  reason why  we  solve \eqref{cohomol-eqn-nonresonant} to find $K^{(3)}$ 
rather than the full relation $\{ K^{(3)}, \H^{(2)} \} = \H^{(3)}$ in \eqref{cohomological-relation}. The latter equation would lead to an ill-defined $K^{(3)}$ with vanishing denominator  $\omega_1-\omega_2 + \omega_3$, which we cannot handle.

The third-order normal form defining the new coordinates is obtained as the solution map at $s=0$ of 
the Hamiltonian flow 
\begin{equation*}
\label{NFT-flow-fourier-initial}
\partial_s   
\begin{pmatrix}
\eta \\ \xi
\end{pmatrix} = 
\begin{pmatrix}
0 & 1 \\
-1 & 0
\end{pmatrix}
\begin{pmatrix}
\partial_{\eta}K^{(3)} \\
\partial_{\xi} K^{(3)}
\end{pmatrix},
\end{equation*}
with initial condition at $s= -1$ being the original variables $(\eta,\xi)$.  Equivalently, in Fourier coordinates,
\begin{align}         \label{Eq:Hamiltonflow-Fourier}       
\partial_s \eta_{-k} = \partial_{\xi_k} K^{(3)},  \quad 
\partial_s \xi_{-k} 
= - \partial_{\eta_k} K^{(3)},
\end{align}
where
\begin{align*}
\partial_{\xi_k}   K^{(3)} & = \frac{1}{\sqrt{2\pi}} \int \Big(   \left( P_{12k} + Q_{1k2} \right)  \eta_1 \eta_2 + \left(R_{12k} + R_{2k1} + R_{k12} \right) \xi_1 \xi_2 \Big) \delta_{12k} d{k}_{12}, \\  
\partial_{\eta_k}   K^{(3)} & = \frac{1}{\sqrt{2\pi}} \int \left(  P_{1k2} + P_{k12} +Q_{12k} + Q_{k21}  \right)   \eta_1 \xi_2 \delta_{12k} d{k}_{12}, 
\end{align*}
by virtue of \eqref{K3-fourier-eta-xi}.

\section{Reduced Hamiltonian} \label{section:reduced-Hamil}

The new Hamiltonian $\H'$ obtained after applying the third-order normal form transformation has the form
\begin{equation}
\label{reduced-hamiltonian-0}
\begin{aligned}
\H(w) & = \H^{(2)}(w) + \H^{(3)}_{\rm Res}(w) + \H^{(4)}(w) - \{K^{(3)}, \H^{(3)}\}(w) + 
\frac{1}{2} \{K^{(3)}, \{K^{(3)}, \H^{(2)}\}\}(w) + R^{(5)}, \\
& = \H^{(2)}(w) + \H^{(3)}_{\rm Res}(w) + \H_+^{(4)}(w) + R^{(5)},
\end{aligned}
\end{equation}
where $R^{(5)}$ denotes all terms of order $5$ and higher, and $\H^{(4)}_+$ is the new fourth-order term
\begin{equation}
\label{new-H4-formula}
\H^{(4)}_+ = \H^{(4)} - \frac{1}{2} \{K^{(3)}, \H^{(3)}_{\rm NoRes}\} - \{K^{(3)}, \H^{(3)}_{\rm Res}\},
\end{equation}
where we have used the relation \eqref{cohomol-eqn-nonresonant}. 
Note that the presence of resonant terms did not eliminate all homogeneous cubic terms in $\H$, and the resonant terms also contribute to the quartic part of the Hamiltonian. 

In view of the forthcoming modulational Ansatz, the only important quartic terms are given by
\begin{equation}
\label{fourth-order-H-R}
\H_{+R}^{(4)} =  \int T z_1 z_2 \overline z_{3} \overline z_{4} \delta_{1+2-3-4} d{k}_{1234}.
\end{equation}
The other quartic terms 
will not contribute under this Ansatz due to homogenization; we refer to Section 4 of \cite{GKS22-siam} for details. In Section \ref{section:vanishing-resonant-cubic-term}, we show that  the resonant cubic Hamiltonian $\H^{(3)}_{\rm Res}$ in \eqref{reduced-hamiltonian-0} can also  be ruled out of the computations. 
As a result, the Hamiltonian $\H$ in \eqref{reduced-hamiltonian-0}    is at  leading order   
\begin{equation*}
\H(w) = \H^{(2)}(w) + \H_{+R}^{(4)}(w) +  
\widetilde{R}.
\end{equation*}

Denoting
\begin{equation} \label{decompH4}
\H_R^{(4)} = \int T_0 z_1 z_2 \overline z_{3} \overline z_{4} \delta_{1+2-3-4} d{ k}_{1234}, 
\end{equation}
\begin{equation} 
\label{decompH4-2}
\begin{aligned}
&\{K^{(3)}, \H^{(3)}_{\rm NoRes}\}_R = \int T_{\rm NoRes} z_1 z_2 \overline z_{3} \overline z_{4} \delta_{1+2-3-4} d{k}_{1234}, \\
& 
\{K^{(3)}, \H^{(3)}_{\rm Res}\}_R = \int T_{\rm Res} z_1 z_2 \overline z_{3} \overline z_{4} \delta_{1+2-3-4} d{k}_{1234},   
\end{aligned}
\end{equation}
the contributions of $zz\overline{z} \overline{z}$-type monomials to the terms of \eqref{new-H4-formula},
we have  that $T$ appearing  in \eqref{fourth-order-H-R} identifies to 
\begin{equation}
\label{T-formula}
T = T_0 - \frac{1}{2} T_{\rm NoRes} - T_{\rm Res}.
\end{equation}

\subsection{Explicit computations of $T_0$, $T_{\rm Res}$ and $T_{\rm NoRes}$}

Here, we provide 
precise formulas for the coefficients $T_0$, $T_{\rm Res}$ and $T_{\rm NoRes}$  
appearing in \eqref{decompH4}--\eqref{decompH4-2}.

\begin{proposition}
\label{lemma-T1-coeff}
We have $T_0 = T_0^{(1)} + T_0^{(2)}$, where
\begin{equation}
\label{T-0}
\begin{aligned}
& T_0^{(1)} = -V_{12(-3)(-4)} -V_{(-4)(-3)21} -V_{1(-3)2(-4)} -V_{(-4)2(-3)1} +V_{1(-4)(-3)2} +V_{(-3)21(-4)},\\[2pt]
& T_0^{(2)} = \frac{k_1 k_2 k_3 k_4}{32 \pi a_1 a_2 a_3 a_4} \big( 5\D (k_1k_2 -k_1k_3 - k_1k_4 - k_2k_3 - k_2k_4 + k_3k_4) - 3 \P \big),
\end{aligned}
\end{equation}
with 
\begin{equation}
\label{D-1234-defn}
\begin{aligned}
& V_{1234} = \frac{a_1 a_4}{32 \pi a_2 a_3} |k_1| |k_4| (|k_1|+|k_4| - 2|k_3+k_4|).
\end{aligned}    
\end{equation}

\begin{proof}
The proof is given in Appendix \ref{section-lemma-T1-coeff}. 
\end{proof}

\end{proposition}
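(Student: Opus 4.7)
The plan is to derive $T_0$ by substituting the canonical change of variables \eqref{eta-xi-to-z-mapping} directly into $\H^{(4)}$ as given in \eqref{H4-in-fourier}, expanding, and extracting the coefficient of the monomial $z_1 z_2 \overline z_3 \overline z_4 \delta(k_1+k_2-k_3-k_4)$. Since $\H^{(4)}$ has two structurally distinct pieces, the DNO-generated term $\xi_1 \eta_2 \eta_3 \xi_4$ and the elastic term $\eta_1\eta_2\eta_3\eta_4$, I would treat them separately and obtain $T_0^{(1)}$ from the first and $T_0^{(2)}$ from the second. The needed substitutions are $\eta_k = (z_k+\bar z_{-k})/(\sqrt{2}\,a_k)$ and $\xi_k = a_k(z_k-\bar z_{-k})/(i\sqrt{2})$, which follow by inverting \eqref{eta-xi-to-z-mapping}.

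For $T_0^{(1)}$, I would expand the product $(z_1-\bar z_{-1})(z_2+\bar z_{-2})(z_3+\bar z_{-3})(z_4-\bar z_{-4})$ into $16$ monomials, of which exactly $\binom{4}{2}=6$ are of type $zz\bar z\bar z$. The prefactor $-a_1a_4/(4a_2a_3)$ combined with $-1/(8\pi)$ and the DNO kernel $|k_1||k_4|(|k_1|+|k_4|-2|k_3+k_4|)$ from \eqref{H4-in-fourier} produces the function $V_{1234}$ of \eqref{D-1234-defn}. I would then relabel each of the six monomials into the canonical form $z_1z_2\bar z_3\bar z_4\,\delta(k_1+k_2-k_3-k_4)$: every $\bar z_{-k_j}$ is converted to $\bar z_{k_j}$ by the change of variable $k_j\to -k_j$, which simultaneously flips the corresponding argument slots of $V$ and turns $\delta_{1234}$ into $\delta(k_1+k_2-k_3-k_4)$. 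The sign of each of the six contributions is inherited from the $(-)$ factors sitting in front of $\bar z_{-1}$ and $\bar z_{-4}$, producing exactly the pattern $-,-,+,+,-,-$ that appears in the six-term sum for $T_0^{(1)}$ in \eqref{T-0}.

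For $T_0^{(2)}$, the four factors $(z_j+\bar z_{-j})/(\sqrt{2}a_j)$ are all symmetric, so each $zz\bar z\bar z$ monomial enters with the same analytic prefactor $1/(4 a_1a_2a_3a_4)$; the total contribution is obtained by summing the polynomial $\tfrac{5\D}{2}k_1^2k_2^2k_3k_4+\tfrac{\P}{4}k_1k_2k_3k_4$ from \eqref{H4-in-fourier} over the six labelings of which pair of indices is relabeled $k\to -k$. The $\P$-monomial is invariant under any even number of sign flips, so it contributes $6\cdot\tfrac{\P}{4}k_1k_2k_3k_4/(16\pi a_1a_2a_3a_4)$, yielding the $-3\P$ term after using the factor $1/8$ from combining $1/(4\pi)$ and $1/(4a_1a_2a_3a_4)$. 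For the $\D$-monomial $k_i^2k_j^2k_\ell k_m$ the sign of $k_\ell k_m$ depends on how many of $\ell,m$ belong to the $\bar z$-pair; tabulating the six cases produces precisely the alternating sum $k_1k_2-k_1k_3-k_1k_4-k_2k_3-k_2k_4+k_3k_4$ (inner pairs contribute $+$, mixed pairs contribute $-$), which matches \eqref{T-0} after pulling out the symmetric factor $k_1k_2k_3k_4/(32\pi a_1a_2a_3a_4)$.

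The main obstacle will be bookkeeping: correctly tracking sign changes and slot permutations through the six relabelings, and then recognising that the resulting expression can be written with the compact $V$-symbol (using its symmetry $V_{1234}=V_{4321}$ under the constraint $k_1+k_2=k_3+k_4$ to identify terms such as $V_{(-3)(-4)12}$ with $V_{(-4)(-3)21}$). Once these identifications are made, no further simplification is needed, and the formula for $T_0=T_0^{(1)}+T_0^{(2)}$ follows, with $T_{\rm NoRes}$ and $T_{\rm Res}$ of \eqref{decompH4-2} treated by the same extraction procedure applied to the double Poisson brackets, but using $K^{(3)}$ from \eqref{K3-fourier-z} in place of the raw Hamiltonian.
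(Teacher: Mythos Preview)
Your approach is essentially the same as the paper's: substitute \eqref{eta-xi-to-z-mapping} into the two pieces of $\H^{(4)}$, expand the quartic products, and collect the six $zz\bar z\bar z$ monomials by relabeling into the canonical form $z_1 z_2 \bar z_3 \bar z_4\,\delta_{1+2-3-4}$. The only organizational difference is that the paper first permutes the indices so that every monomial reads $z_1 z_2 \bar z_{-3}\bar z_{-4}\,\delta_{1234}$ and then performs a single global sign flip $(k_3,k_4)\to(-k_3,-k_4)$, which avoids the term-by-term sign-chasing and the $V_{1234}=V_{4321}$ identification you mention (that identification is in fact not needed, and your example $V_{(-3)(-4)12}\leftrightarrow V_{(-4)(-3)21}$ is not an instance of it).
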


\begin{proposition}
\label{lemma-T-nores-coeff}
Let $B_{123} := S_{123} \left(1-\chi_\mathcal{N_\mu} (k_1, k_2, k_3)\right)$ with $S_{123}$ and $\chi_{\mathcal{N}_\mu}$  given in \eqref{S_123-defn} and  \eqref{characteristic-fnct-defn} respectively. Then,  
$T_{\rm NoRes} =  T_{\rm NoRes}^{(1)} +  T_{\rm NoRes}^{(2)} +  T_{\rm NoRes}^{(3)}$ with
\begin{equation}
\label{T-nores-1}
\begin{aligned}
 T_{\rm NoRes}^{(1)} = & ~\frac{1}{64 \pi}  \big(S_{(-1-2)12} + S_{2(-1-2)1} + S_{12(-1-2)}\big)
 \left(S_{(-3-4)34} + S_{4(-3-4)3} + S_{34(-3-4)}\right) \\
 & \qquad \quad \times 
\left( \frac{1}{\omega_{1}+ \omega_{2}+ \omega_{1+2}} + \frac{1}{\omega_{3}+ \omega_{4}+ \omega_{3+4}} \right) \\
& + \frac{1}{16\pi} \big( S_{(3-1)(-3)1} + S_{1(3-1)(-3)} \big) \big( S_{(4-2)2(-4)} + S_{(-4)(4-2)2} \big) \\
 & \qquad \quad \times 
\left( \frac{1}{\omega_{3-1}+ \omega_{3}- \omega_{1}} + \frac{1}{\omega_{4-2}+ \omega_{2}- \omega_{4}} \right) \\
& - \frac{1}{16\pi} S_{12(-1-2)} S_{34(-3-4)} 
\left( \frac{1}{\omega_{1}+ \omega_{2}- \omega_{1+2}} + \frac{1}{\omega_{3}+ \omega_{4}- \omega_{3+4}} \right),
\end{aligned}
\end{equation}
\begin{equation}
\label{T-nores-2}
\begin{aligned}
T_{\rm NoRes}^{(2)} = &~ \frac{1}{32 \pi} \left( S_{12(-1-2)} B_{3(-3-4)4} + S_{34(-3-4)} B_{1(-1-2)2} \right) \left( \frac{1}{\omega_1 + \omega_2 - \omega_{1+2}} + \frac{1}{\omega_3 + \omega_4 - \omega_{3+4}} \right)\\
& - \frac{1}{16\pi} B_{(4-2)(-4)2} \left( S_{(3-1)(-3)1} + S_{(-3)(3-1)1} \right) \left( \frac{1}{\omega_{3-1} + \omega_3-\omega_1} + \frac{1}{\omega_{4-2} + \omega_2-\omega_4} \right)\\
& - \frac{1}{16\pi} B_{(3-1)1(-3)} \left( S_{(4-2)2(-4)} + S_{2(4-2) (-4)} \right) \left( \frac{1}{\omega_{3-1} + \omega_3-\omega_1} + \frac{1}{\omega_{4-2} + \omega_2-\omega_4} \right),
\end{aligned}
\end{equation}
and 
\begin{equation}
\label{T-nores-3}
\begin{aligned}
T_{\rm NoRes}^{(3)} = &~ - \frac{1}{64 \pi} B_{1(-1-2)2} B_{3(-3-4)4} \left( \frac{1}{\omega_{1} + \omega_2 - \omega_{1+2}} + \frac{1}{\omega_{3} + \omega_4 - \omega_{3+4}} \right)\\
& + \frac{1}{16 \pi} B_{(3-1)1(-3)} B_{(4-2)(-4)2} \left( \frac{1}{\omega_{3-1} + \omega_3 - \omega_1} + \frac{1}{\omega_{4-2} + \omega_2 - \omega_4} \right).
\end{aligned}
\end{equation}
Here, based on \eqref{S_123-defn}, $S_{(3-1)(-3)1}$ reads 
\begin{equation}
\label{S-3minus1-defn}
S_{(3-1)(-3)1} = \big( (k_3-k_1)k_1 + |k_3-k_1| |k_1| \big) \frac{a(k_3-k_1) a(k_1)}{a(-k_3)},
\end{equation}
and $B_{(3-1)(-3)1} = S_{(3-1)(-3)1} \big( 1- \chi_{\mathcal{N}_\mu} (k_3-k_1, -k_3, k_1) \big)$. 

\end{proposition}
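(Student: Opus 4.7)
The plan is to compute the Poisson bracket $\{K^{(3)}, \H^{(3)}_{\rm NoRes}\}$ directly in the complex symplectic coordinates and then project onto the monomial type $z_1 z_2 \bar z_3 \bar z_4 \delta_{1+2-3-4}$. First I expand $(z_1-\bar z_{-1})(z_2+\bar z_{-2})(z_3-\bar z_{-3})$ in $\H^{(3)}$ and relabel dummy momenta under $\delta_{123}$ to put each of the eight monomial types into a canonical symmetrized form, whose kernel is a symmetric combination of permutations of $S_{123}$. Since $\H^{(3)}_{\rm Res}$ involves only the $z\bar z z$ and $\bar z z\bar z$ monomial types, subtracting it replaces precisely those kernels by their $B_{123}$ analogues (multiplication by $1-\chi_{\mathcal{N}_\mu}$) and leaves the other six monomial types unaffected. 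I perform the analogous expansion of $K^{(3)}$: the $zzz$ and $\bar z\bar z\bar z$ monomials carry the denominator $\omega_1+\omega_2+\omega_3$, the $zz\bar z$ and $\bar z\bar z z$ monomials carry $\omega_1+\omega_2-\omega_3$, and the $z\bar z z$ and $\bar z z\bar z$ monomials carry $\omega_1-\omega_2+\omega_3$ together with the $(1-\chi_{\mathcal{N}_\mu})$ cutoff.

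Next I take the functional derivatives $\partial_{z_k}$, $\partial_{\bar z_{-k}}$ of both $K^{(3)}$ and $\H^{(3)}_{\rm NoRes}$ and substitute into
\[
\{K^{(3)}, \H^{(3)}_{\rm NoRes}\} = \frac{1}{i}\int\!\big(\partial_{z_{k_1}}\H^{(3)}_{\rm NoRes}\,\partial_{\bar z_{-k_2}}K^{(3)} - \partial_{\bar z_{-k_1}}\H^{(3)}_{\rm NoRes}\,\partial_{z_{k_2}}K^{(3)}\big)\delta_{12}\,dk_{12}.
\]
Keeping only the $zz\bar z\bar z$-type contributions, each corresponds to contracting a single pair of modes across the bracket. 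The contracted wavenumber is either $k_{1+2}=k_1+k_2=k_3+k_4$ (from pairings of $zz\bar z$ with $\bar z\bar z z$-type factors, producing denominators $\omega_1+\omega_2\pm\omega_{1+2}$ and $\omega_3+\omega_4\pm\omega_{3+4}$), or $k_{3-1}=k_3-k_1$ and $k_{4-2}=k_4-k_2$ (from pairings of $z\bar z z$ with $\bar z z\bar z$-type factors, producing denominators $\omega_{3-1}+\omega_3-\omega_1$ and $\omega_{4-2}+\omega_2-\omega_4$). After relabeling to bring every contribution to the canonical form $\int(\cdots)z_1 z_2 \bar z_3 \bar z_4\delta_{1+2-3-4}dk_{1234}$ and averaging under the required $k_1\leftrightarrow k_2$ and $k_3\leftrightarrow k_4$ symmetries, the pieces group naturally by the number of $(1-\chi_{\mathcal{N}_\mu})$ cutoff factors inherited from $K^{(3)}$: zero cutoff factors gives $T_{\rm NoRes}^{(1)}$, one cutoff factor gives the mixed $S$-$B$ combinations of $T_{\rm NoRes}^{(2)}$, and two cutoff factors gives the $B$-$B$ combinations of $T_{\rm NoRes}^{(3)}$.

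The main obstacle is bookkeeping. Every functional derivative of a symmetrized cubic monomial yields a sum of three permuted quadratic terms, each subsequent product yields on the order of nine pieces, and there are many such products across the six relevant monomial types of $K^{(3)}$ and $\H^{(3)}_{\rm NoRes}$. Signs have to be tracked carefully through the $(z_j - \bar z_{-j})$ versus $(z_2 + \bar z_{-2})$ factors of $\H^{(3)}$, and the prefactors $\tfrac{1}{8i\sqrt{\pi}}$ from $K^{(3)}$, $\tfrac{1}{8\sqrt{\pi}}$ from $\H^{(3)}$, and $1/i$ from the bracket must combine with the combinatorial symmetrization factors to yield the stated $\tfrac{1}{64\pi}$, $\tfrac{1}{32\pi}$, $\tfrac{1}{16\pi}$. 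I would organize the calculation in a table indexed by (monomial type in $K^{(3)}$, monomial type in $\H^{(3)}_{\rm NoRes}$, contracted variable), verify as a sanity check that the resulting kernel has the expected $k_1\leftrightarrow k_2$ and $k_3\leftrightarrow k_4$ symmetries, and collect the final contributions into the three groups as stated in \eqref{T-nores-1}--\eqref{T-nores-3}.
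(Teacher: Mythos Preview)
Your approach is essentially the same as the paper's: it too splits both $K^{(3)}$ and $\H^{(3)}_{\rm NoRes}$ according to whether the $z\bar z z$ (cutoff-carrying) monomial type is present, writing $K^{(3)}=K^{(3)}_0+K^{(3)}_\chi$ and $\H^{(3)}_{\rm NoRes}=\H^{(3)}_{{\rm NoRes},0}+\H^{(3)}_{{\rm NoRes},\chi}$, so that the four cross brackets yield $T_{\rm NoRes}^{(1)}$ (no $\chi$), $T_{\rm NoRes}^{(2)}$ (one $\chi$), and $T_{\rm NoRes}^{(3)}$ (two $\chi$'s) respectively. One small slip in your last paragraph: the cutoff factors are inherited from \emph{both} $K^{(3)}$ and $\H^{(3)}_{\rm NoRes}$, not just $K^{(3)}$---each contributes at most one---which you in fact already noted correctly earlier in the proposal.
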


\begin{proof}
The proof is given in Appendix \ref{section-lemma-T-nores-coeff}.
\end{proof}

\begin{proposition}
\label{lemma-T-res-coeff}
Recall that $S_{123}$ given by \eqref{S_123-defn} and $B_{123} := S_{123} \left(1-\chi_{\mathcal{N}_\mu} (k_1, k_2, k_3)\right)$. Then, $T_{\rm Res} = T_{\rm Res}^{(1)} + T_{\rm Res}^{(2)}$ with 
\begin{equation}
\label{T-res-1}
\begin{aligned}
T_{\rm Res}^{(1)} =&~  \frac{1}{32\pi} \left(  \frac{S_{12(-1-2)}}{\omega_1 + \omega_2 - \omega_{1+2}}   [\chi S]_{3(-3-4)4} + \frac{S_{34(-3-4)}}{\omega_3 + \omega_4 - \omega_{3+4}}  [\chi S]_{1(-1-2)2} \right)\\
& - \frac{1}{16 \pi (\omega_{3-1} + \omega_3 - \omega_{1})} \left( S_{(3-1)(-3)1} + S_{(-3)(3-1)1} \right) [\chi S]_{(4-2)(-4)2}\\
& - \frac{1}{16 \pi (\omega_{4-2} + \omega_2 - \omega_{4})} \left( S_{(4-2)2(-4)} + S_{2(4-2)(-4)} \right) [\chi S]_{(3-1)1(-3)},
\end{aligned}
\end{equation}
and 
\begin{equation}
\label{T-res-2}
\begin{aligned}
T_{\rm Res}^{(2)} =& \frac{1}{16\pi} \left( \frac{B_{(3-1)1(-3)}}{\omega_{3-1} + \omega_3 - \omega_{1}} [\chi S]_{(4-2)(-4)2} + \frac{B_{(4-2)(-4)2}}{\omega_{4-2} + \omega_2 - \omega_{4}} [\chi S]_{(3-1)1(-3)} \right)\\[3pt]
& - \frac{1}{64 \pi} \left( \frac{B_{1(-1-2)2}}{ \omega_1 + \omega_2 - \omega_{1+2}} [\chi S]_{3(-3-4)4} + \frac{B_{3(-3-4)4}}{ \omega_3 + \omega_4 - \omega_{3+4}} [\chi S]_{1(-1-2)2}  \right), 
\end{aligned}
\end{equation}
where we define 
\begin{equation*}
[\chi S]_{123} := B_{123} - S_{123} =  \chi_{\mathcal{N}_\mu} (k_1, k_2, k_3) S_{123}.  
\end{equation*}
\end{proposition}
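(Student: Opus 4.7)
The coefficient $T_{\rm Res}$ is the $z_1 z_2 \bar z_3 \bar z_4 \delta_{1+2-3-4}$ component of $\{K^{(3)},\H^{(3)}_{\rm Res}\}$, and the computation follows the same template as the preceding result for $T_{\rm NoRes}$, merely with $\H^{(3)}_{\rm Res}$ in place of $\H^{(3)}_{\rm NoRes}$ in the second slot of the bracket. My plan is to apply the Fourier form \eqref{poisson-bracket-formula} of the Poisson bracket to $K^{(3)}$ as written in \eqref{K3-fourier-z} and $\H^{(3)}_{\rm Res}$ as written in \eqref{H3-resonant}, and to extract the quartic monomials of $zz\bar z\bar z$ type.

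A preliminary count of $z$'s versus $\bar z$'s in each factor shows that only two pairings (and their complex conjugates) of the monomial classes in $K^{(3)}$ with those of $\H^{(3)}_{\rm Res}$ can produce a $zz\bar z\bar z$ monomial: the $zz\bar z$ line of $K^{(3)}$ with coefficient proportional to $S/(\omega_1+\omega_2-\omega_3)$, paired with $\H^{(3)}_{\rm Res}$; and the $z\bar z z$ line of $K^{(3)}$ with coefficient proportional to $(1-\chi)S/(\omega_1-\omega_2+\omega_3)=B/(\omega_1-\omega_2+\omega_3)$, paired with $\H^{(3)}_{\rm Res}$. These two families produce $T_{\rm Res}^{(1)}$ (the $S$-from-$K^{(3)}$ pieces) and $T_{\rm Res}^{(2)}$ (the $B$-from-$K^{(3)}$ pieces), respectively. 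In every case the factor coming from $\H^{(3)}_{\rm Res}$ carries the characteristic function $\chi_{\mathcal{N}_\mu}$ and is therefore the bracket $[\chi S]_{\cdot\cdot\cdot}$ appearing throughout \eqref{T-res-1}--\eqref{T-res-2}.

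Concretely, I would differentiate each monomial of $K^{(3)}$ and $\H^{(3)}_{\rm Res}$ with respect to $z_{k_1}$ or $\bar z_{-k_2}$, then use the three Dirac deltas (two from the monomial densities and one from the bracket integral) to collapse to a single $\delta(k_1+k_2-k_3-k_4)$ with one surviving integration variable. Relabelling that variable produces the specific argument patterns visible in the statement: the internal mode is identified as $k_1+k_2$ in contributions from the $zz\bar z$ line of $K^{(3)}$ (yielding $S_{12(-1-2)}$, $S_{34(-3-4)}$), and as $k_3-k_1$ or $k_4-k_2$ in contributions from the $z\bar z z$ line (yielding $S_{(3-1)(-3)1}$, $S_{(3-1)1(-3)}$, and cyclic variants, which are genuinely different since $S_{abc}$ is symmetric only under $a\leftrightarrow c$). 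Symmetrizing the integrand in the pairs $(1,2)$ and $(3,4)$, which is legitimate because $z_1 z_2 \bar z_3 \bar z_4$ is symmetric under these exchanges, regenerates the paired denominators $\tfrac{1}{\omega_1+\omega_2-\omega_{1+2}}+\tfrac{1}{\omega_3+\omega_4-\omega_{3+4}}$ and $\tfrac{1}{\omega_{3-1}+\omega_3-\omega_1}+\tfrac{1}{\omega_{4-2}+\omega_2-\omega_4}$ displayed in \eqref{T-res-1}--\eqref{T-res-2}. The overall prefactors $\tfrac{1}{32\pi}$, $\tfrac{1}{16\pi}$, $\tfrac{1}{64\pi}$ accumulate from the normalizations $\tfrac{1}{8i\sqrt{\pi}}$ in $K^{(3)}$ and $\tfrac{1}{8\sqrt{\pi}}$ in $\H^{(3)}_{\rm Res}$, the factor of $2$ in the $zz\bar z$ line of $K^{(3)}$, and the combinatorial multiplicities of $\partial_{z_{k_1}}(z_a z_b)$ that account for split sums like $S_{(3-1)(-3)1}+S_{(-3)(3-1)1}$.

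The principal obstacle is bookkeeping rather than any new conceptual step. One must faithfully track the placement of the internal mode among the three arguments of each $S$ or $B$ factor (since $S_{abc}$ is invariant only under $a\leftrightarrow c$, as seen from \eqref{S_123-defn}), record the signs coming from the $\pm$ structures in \eqref{K3-fourier-z} and \eqref{H3-resonant}, and keep the overall combinatorial weights consistent. Once the indices are properly labelled, the calculation is structurally identical to the proof of Proposition \ref{lemma-T-nores-coeff} in the referenced appendix: every term of $T_{\rm Res}^{(1)}$ and $T_{\rm Res}^{(2)}$ is the direct analogue of a term in \eqref{T-nores-2} with the substitution $B\mapsto[\chi S]$ in one of the two $B$-factors of the pairing, so an end-to-end verification can be performed by comparison with that proof.
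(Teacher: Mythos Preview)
Your proposal is correct and follows essentially the same route as the paper. The paper's proof simply splits $K^{(3)}=K^{(3)}_0+K^{(3)}_\chi$ (the $S$-lines versus the $B$-line of \eqref{K3-fourier-z}) and states that $\{K^{(3)}_0,\H^{(3)}_{\rm Res}\}$ yields $T_{\rm Res}^{(1)}$ while $\{K^{(3)}_\chi,\H^{(3)}_{\rm Res}\}$ yields $T_{\rm Res}^{(2)}$, referring to Appendix~\ref{section-lemma-T-nores-coeff} for the bookkeeping; your $z/\bar z$ count recovers exactly this decomposition and your identification of the internal modes, symmetrizations, and prefactors matches the appendix computation.
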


\begin{proof}
The proof is similar to the proof of Proposition \ref{lemma-T-nores-coeff} in Appendix \ref{section-lemma-T-nores-coeff}. In particular, using the decomposition of $K^{(3)}$ given by terms \eqref{K3-0-chi-defn}, we write
\begin{equation*}
\{K^{(3)}, \H_{\rm Res}^{(3)}\} = \{K^{(3)}_0, \H_{\rm Res}^{(3)}\} + \{K^{(3)}_\chi, \H_{\rm Res}^{(3)}\}. 
\end{equation*}
Then the first Poisson bracket leads to the coefficient \eqref{T-res-1}, while the second bracket implies \eqref{T-res-2}. 
\end{proof}




\subsection{Modulational Ansatz}

We restrict ourselves to solutions in the form of near-monochromatic waves with carrier
wavenumber $k_0 > 0$.
For this purpose, we introduce the modulational Ansatz 
\begin{equation}
\label{modulation}
k = k_0 + \varepsilon \lambda, \quad \text{where}  \quad  \frac{\lambda}{k_0} = \calO(1),\quad \varepsilon \ll 1,
\end{equation}
and, accordingly, a function $U$ is defined in the Fourier space as 
\begin{equation}
\label{U-in-fourier}
U(\lambda) = z(k_0 + \varepsilon \lambda), \quad \overline U(\lambda) = \overline z(k_0 + \varepsilon \lambda),
\end{equation}
where  the time dependence is omitted. In the physical space,
\begin{equation}
\label{z-u-relation-physical}
\begin{aligned}
z(x) & = \frac{1}{\sqrt{2\pi}} \int z(k) e^{ikx} dk =  \frac{\varepsilon}{\sqrt{2\pi}} \int U(\lambda) e^{i k_0 x} e^{i \lambda \varepsilon x} d\lambda = \varepsilon \, u(X) e^{ik_0 x}, 
\end{aligned}
\end{equation}
where $u$, as a function of  the long  scale  $X = \varepsilon \, x$, is the inverse Fourier transform of $U$. After the change of variables \eqref{modulation}, the following integral identity for any function $C$ holds
\begin{equation}
\label{quartic-z-to-quartic-U-integral}
\begin{aligned}
& \int C(k_1, k_2, k_3, k_4) z_1 z_2 \overline z_3 \overline z_4 \delta_{1+2-3-4} dk_{1234} \\
& = \varepsilon^3 \int \widetilde C(\lambda_1, \lambda_2, \lambda_3, \lambda_4) U_1 U_2 \overline U_3 \overline U_4 \delta_{1+2-3-4}^{(\lambda)} d\lambda_{1234},
\end{aligned}
\end{equation}
where $U_j$ stands for $U(\lambda_j)$, the Dirac distribution on the right-hand side is defined as $\delta_{1+2-3-4}^{(\lambda)} = (1/2\pi) \int e^{-i (\lambda_1+ \lambda_2 - \lambda_3 - \lambda_4)X} dX$ and $C(k_1, k_2, k_3, k_4) = \widetilde C(\lambda_1, \lambda_2, \lambda_3, \lambda_4)$.


\subsection{Vanishing of the resonant cubic term $\H^{(3)}_{\rm Res}$}
\label{section:vanishing-resonant-cubic-term}

We show that, under the modulational regime \eqref{modulation}, the resonant cubic Hamiltonian $\H^{(3)}_{\rm Res}$ given in \eqref{H3-resonant}
 is sufficiently small and can be neglected. The estimates will be done for a cubic term of type
\begin{equation*}
\mathcal{I} := \int T_{k_1, k_2, k_3} z_1 \overline{z}_{2} z_3 \delta(k_1-k_2+k_3) dk_{123},
\end{equation*}
which can be similarly repeated for each term of \eqref{H3-resonant}.
After the change of variables \eqref{modulation}, we have
\begin{equation*}
\mathcal{I} = \frac{\varepsilon^3}{2\pi} \int e^{-ik_0 x} \int \widetilde{T}_{\lambda_1, \lambda_2, \lambda_3} U_1 \overline{U}_2 U_3 e^{-i(\lambda_1 - \lambda_2 + \lambda_3) (\varepsilon x)} d\lambda_{123} dx,
\end{equation*}
where $\widetilde{T}_{\lambda_1, \lambda_2, \lambda_3} :={T}_{k_0+\varepsilon \lambda_1, k_0+ \varepsilon \lambda_2, k_0+ \varepsilon \lambda_3}$. We identify the inner integral above with the function $f(\varepsilon x)$ as
\begin{equation*}
f(\varepsilon x) := \int \widetilde{T}_{\lambda_1, \lambda_2, \lambda_3} U_1 \overline{U}_2 U_3 e^{-i(\lambda_1 - \lambda_2 + \lambda_3) (\varepsilon x)} d\lambda_{123}.
\end{equation*}
As a result, we have
\begin{equation}
\label{cubic-term-generic}
\mathcal{I} = \frac{\varepsilon^3}{2\pi} \int e^{-ik_0 x} f(\varepsilon x) dx. 
\end{equation} 
To show that the integral \eqref{cubic-term-generic} is negligible, we use the scale-separation Lemma 4.4 of \cite{GKS22-siam}.

\subsection{Quartic interactions in the modulational regime} 

We approximate the coefficients in Propositions \ref{lemma-T1-coeff}
and \ref{lemma-T-nores-coeff} under the modulational regime \eqref{modulation}.

\begin{lemma}
\label{lemma-T1-approximation}
Under the modulational Ansatz (\ref{modulation}), we have 
\begin{equation*}
\int T_0 z_1 z_2 \overline z_3 \overline z_4 \delta_{1+2-3-4} dk_{1234} = \varepsilon^3 \int \Big(c_0^l 
 + \varepsilon c_0^r  (\lambda_2 + \lambda_3) \Big)
U_1 U_2 \overline U_3 \overline U_4 \delta_{1+2-3-4}^{(\lambda)} d\lambda_{1234}  + \calO(\varepsilon^5),
\end{equation*}
where  $T_0$ is given in Proposition \ref{lemma-T1-coeff} and the coefficients are 
\begin{equation}
\label{c-0-coeff}
\begin{aligned}
& c_0^l = \frac{k_0^3}{8\pi} + \frac{k_0^6}{16 \pi \omega_0^2} \left(\frac{3}{2} \P - 5\D k_0^2 \right), \\
&c_0^r = \frac{3 k_0^2}{16\pi} + \frac{k_0^6}{16 \pi \omega_0^2} \left[ \left({\frac{3}{2} \P} - 5\D k_0^2 \right) \left(\frac{2}{k_0} + \frac{g+\P k_0^2 -3\D k_0^4}{2\omega_0^2} \right) -5\D k_0 \right].
\end{aligned}
\end{equation} 
\end{lemma}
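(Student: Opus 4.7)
My plan is to substitute the modulational change of variables $k_j = k_0 + \varepsilon \lambda_j$ directly into the quartic integral, apply the identity \eqref{quartic-z-to-quartic-U-integral} to pull out a factor of $\varepsilon^3$, and then Taylor expand $\widetilde T_0(\lambda) := T_0(k_0 + \varepsilon \lambda_1,\ldots,k_0 + \varepsilon \lambda_4)$ around the symmetric point $k_j = k_0$ to first order in $\varepsilon$:
\begin{equation*}
\widetilde T_0(\lambda) = T_0\big|_{k_j = k_0} + \varepsilon \sum_{j=1}^4 \lambda_j \,(\partial_{k_j} T_0)\big|_{k_j = k_0} + O(\varepsilon^2).
\end{equation*}
The zeroth-order term produces $c_0^l$, the first-order term will be rewritten as $\varepsilon c_0^r (\lambda_2 + \lambda_3)$ after using the symmetries of the integrand, and the $O(\varepsilon^2)$ remainder produces the error term $O(\varepsilon^5)$ once combined with the $\varepsilon^3$ prefactor.

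For the leading coefficient, I would compute $c_0^l = T_0^{(1)}|_0 + T_0^{(2)}|_0$ directly from the formulas in Proposition \ref{lemma-T1-coeff}. In $T_0^{(2)}$, the symmetric polynomial $k_1 k_2 - k_1 k_3 - k_1 k_4 - k_2 k_3 - k_2 k_4 + k_3 k_4$ collapses at $k_j = k_0$ to $-2 k_0^2$, and the prefactor reduces to $k_0^6/(32\pi \omega_0^2)$ using $a_0^2 = \omega_0/k_0$. For $T_0^{(1)}$, each of the six $V$-terms has prefactor $(32\pi)^{-1} k_0^2$ (since $a_{-k_0} = a_{k_0}$), and the factor $|k_1^V| + |k_4^V| - 2|k_3^V + k_4^V|$ equals $-2 k_0$ for the two index patterns yielding $|k_3^V + k_4^V| = 2 k_0$ and $+2 k_0$ for the four patterns with $|k_3^V + k_4^V| = 0$. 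Summing the six contributions with the signs prescribed in $T_0^{(1)}$ yields the expected $k_0^3/(8\pi)$.

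For the next-order coefficient, I would use the symmetries of the integrand $U_1 U_2 \overline U_3 \overline U_4 \delta^{(\lambda)}_{1+2-3-4}$ under $1 \leftrightarrow 2$ and $3 \leftrightarrow 4$ to replace $T_0$ by its symmetrization, after which $(\partial_{k_1} T_0)|_0 = (\partial_{k_2} T_0)|_0$ and $(\partial_{k_3} T_0)|_0 = (\partial_{k_4} T_0)|_0$. The linear combination $\sum_j \lambda_j (\partial_{k_j} T_0)|_0$ then reduces, via the delta constraint $\lambda_1 + \lambda_2 = \lambda_3 + \lambda_4$, to a single scalar multiple of $\lambda_2 + \lambda_3$ (equivalent to $\lambda_1 + \lambda_2$ or $\lambda_3 + \lambda_4$ modulo the integrand symmetries). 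The explicit value of $c_0^r$ is then computed by differentiating $T_0^{(1)} + T_0^{(2)}$ with respect to a single variable at the symmetric point, using $a_k^2 = \omega_k/|k|$ and $\omega_k' = (g - 3\mathcal{P} k^2 + 5\mathcal{D} k^4)/(2\omega_k)$. The combination $(g + \mathcal{P} k_0^2 - 3 \mathcal{D} k_0^4)/(2\omega_0^2)$ appearing in $c_0^r$ can be identified with $a_0^2 \partial_k (a_k^{-2})|_{k_0}$, i.e., it arises from the derivative of the Jacobian factor $1/a_k^2$ that multiplies $k$-dependent numerators in $T_0$.

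The principal obstacle lies in the algebraic treatment of the $T_0^{(1)}$ contribution to $c_0^r$. Each $V$-term contains the non-smooth factor $|k_3^V + k_4^V|$, and four of these arguments vanish at the symmetric point, producing sign-dependent derivatives. This apparent non-smoothness is resolved by pairing the $V$-terms that share the same absolute-value argument and invoking the delta relation $k_1 + k_2 = k_3 + k_4$ to identify $|k_2 - k_4|$ with $|k_1 - k_3|$ and $|k_2 - k_3|$ with $|k_1 - k_4|$; after symmetrization against the integrand, only even combinations of these sign functions contribute and the result is smooth. Assembling all partial-derivative contributions from $T_0^{(1)}$ and $T_0^{(2)}$ then yields $c_0^r$ in the factored form of \eqref{c-0-coeff}.
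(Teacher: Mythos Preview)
Your proposal is correct and follows essentially the same approach as the paper, which simply states that the proof is a direct expansion of $T_0$ under the modulational Ansatz using the identities \eqref{expansion-identities} and refers to \cite{GKS22-jfm} for details. You in fact supply considerably more detail than the paper does, including a correct treatment of the potentially non-smooth $|k_3^V+k_4^V|$ contributions: the four $V$-terms with vanishing argument produce $O(\varepsilon)$ pieces proportional to $|\lambda_1-\lambda_3|$, $|\lambda_2-\lambda_4|$, $|\lambda_1-\lambda_4|$, $|\lambda_2-\lambda_3|$, which cancel under the integral by Lemma~\ref{lemma-integral-identities-1} and the delta constraint, exactly as you outline.
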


\begin{proof}
The proof is based on the direct expansion of the coefficients \eqref{T-0} under the modulational regime \eqref{modulation} using the identities in \eqref{expansion-identities}. For more detailed computations, we refer to the proof of Lemma 5.2 in \cite{GKS22-jfm}. 
\end{proof}

\begin{lemma}
\label{lemma-T2-approximation}
Under the modulational Ansatz (\ref{modulation}), we have  
\begin{equation}
\label{T-nores-1-approximation}
\begin{aligned}
\int T_{\rm NoRes}^{(1)} z_1 z_2 \overline z_3 \overline z_4 \delta_{1+2-3-4} dk_{1234} & = \varepsilon^3 \int 
\big(c_1^l  + \varepsilon c_1^r  (\lambda_2 + \lambda_3) \big) U_1 U_2 \overline U_3 \overline U_4 \delta_{1+2-3-4}^{(\lambda)} d\lambda_{1234} \\
& \quad + \frac{\varepsilon^4 k_0^2}{4\pi} \int   |\lambda_1 - \lambda_3| 
U_1 U_2 \overline U_3 \overline U_4 \delta_{1+2-3-4}^{(\lambda)} d\lambda_{1234} +  \calO(\varepsilon^5),
\end{aligned}
\end{equation}


\begin{equation}
\label{T-nores-2-approximation}
\begin{aligned}
\int T_{\rm NoRes}^{(2)} z_1 z_2 \overline z_3 \overline z_4 \delta_{1+2-3-4} dk_{1234} =  
\calO(\varepsilon^5),
\end{aligned}
\end{equation}
\begin{equation}
\label{T-nores-3-approximation}
\begin{aligned}
& \int T_{\rm NoRes}^{(3)} z_1 z_2 \overline z_3 \overline z_4 \delta_{1+2-3-4} dk_{1234} \\
& = \varepsilon^3 \int \left(1-\chi_{\mathcal{N}_\mu} (k_1, -k_1-k_2, k_2)\right) \left(1-\chi_{\mathcal{N}_\mu} (k_3, -k_3-k_4, k_4)\right) \\
& \qquad \quad \times \big(c_2^l + \varepsilon c_2^r (\lambda_2 + \lambda_3)\big) 
 U_1 U_2 \overline U_3 \overline U_4 \delta_{1+2-3-4}^{(\lambda)} d\lambda_{1234} \\
& \quad + \frac{\varepsilon^4 k_0^2}{4\pi} \int \left(1-\chi_{\mathcal{N}_\mu} (k_3-k_1, k_1, -k_3)\right) \left(1-\chi_{\mathcal{N}_\mu} (k_4-k_2, -k_4, k_2)\right) \\
& \qquad \qquad \quad \times (1+ \sgn{\lambda_1-\lambda_3}) |\lambda_1 - \lambda_3|
U_1 U_2 \overline U_3 \overline U_4 \delta_{1+2-3-4}^{(\lambda)} d\lambda_{1234}  + \calO(\varepsilon^5),
\end{aligned}
\end{equation}
where
{\small
\begin{equation}
\label{c-l-r-coeff}
\begin{aligned}
&c_1^l = \frac{k_0^3 \omega_0^2}{4\pi \omega_{2k_0} (2\omega_0 + \omega_{2k_0})},\\[3pt]
& c_1^r = \frac{c_1^l}{2} \left( 
\frac{3g-5\P k_0^2 + 7\D k_0^4}{\omega_{0}^2} +
\frac{g+4\P k_0^2 -48 \D k_0^4}{\omega_{2k_0}^2} -
\frac{g-12\P k_0^2 +80 \D k_0^4}{\omega_{2k_0}(2\omega_0 + \omega_{2k_0})} - 
\frac{g-3\P k_0^2 +5 \D k_0^4}{\omega_{0}(2\omega_0 + \omega_{2k_0})}
\right),\\[3pt]
& c_2^l = -\frac{k_0^3 \omega_0^2}{4\pi \omega_{2k_0} (2\omega_0 - \omega_{2k_0})},\\[3pt]
& c_2^r = \frac{c_2^l}{2} \left( 
\frac{3g-5\P k_0^2 + 7\D k_0^4}{\omega_{0}^2} +
\frac{g+4\P k_0^2 -48 \D k_0^4}{\omega_{2k_0}^2} { +}
\frac{g-12\P k_0^2 +80 \D k_0^4}{\omega_{2k_0}(2\omega_0 - \omega_{2k_0})} - 
\frac{g-3\P k_0^2 +5 \D k_0^4}{\omega_{0}(2\omega_0 - \omega_{2k_0})}
\right).
\end{aligned}
\end{equation}}
\end{lemma}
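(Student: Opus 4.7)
The plan is to apply the modulational Ansatz (\ref{modulation}) to each integrand via the identity (\ref{quartic-z-to-quartic-U-integral}), so that each $T_{\rm NoRes}^{(j)}$ is evaluated at $k_j = k_0 + \varepsilon\lambda_j$ and Taylor-expanded in $\varepsilon$. The measure change furnishes a prefactor $\varepsilon^3$; the leading value at $k_j = k_0$ gives the $\varepsilon^3$ term, and the first-order $k$-derivatives produce the $\varepsilon^4$ correction that is eventually grouped as $\lambda_2 + \lambda_3$ using the constraint $\lambda_1+\lambda_2=\lambda_3+\lambda_4$. The real subtlety is that several summands in (\ref{T-nores-1})--(\ref{T-nores-3}) become singular as $k_3 \to k_1$, and the $B$-functions require a separate bookkeeping of the $(1-\chi_{\mathcal{N}_\mu})$ factor.

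\textbf{Sign-structure of the $S$-factors.}
Before any expansion, I would sift the nonzero summands. Under the Ansatz all $k_j$ are positive and near $k_0$, while $-k_1-k_2, -k_3-k_4 \approx -2k_0 < 0$; only $k_3-k_1=\varepsilon(\lambda_3-\lambda_1)$ can change sign. The factor $k_a k_c + |k_a||k_c|$ in (\ref{S_123-defn}) vanishes identically whenever $k_a$ and $k_c$ have opposite signs, which wipes out $S_{12(-1-2)}, S_{(-1-2)12}, S_{34(-3-4)}, S_{(-3-4)34}, S_{1(3-1)(-3)}, S_{(-4)(4-2)2}, S_{(-3)(3-1)1}$ and $S_{2(4-2)(-4)}$ in the modulational regime. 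In particular, the last summand of (\ref{T-nores-1}) drops out, and every product in $T_{\rm NoRes}^{(2)}$ reduces to a pair whose two factors are supported on the mutually exclusive sets $\{k_3>k_1\}$ and $\{k_3<k_1\}$ (which are equivalent via the constraint $\lambda_1+\lambda_2=\lambda_3+\lambda_4$). Hence $T_{\rm NoRes}^{(2)}\equiv 0$ in the modulational regime, which is precisely (\ref{T-nores-2-approximation}).

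\textbf{Regular terms.}
The surviving first lines of $T_{\rm NoRes}^{(1)}$ and $T_{\rm NoRes}^{(3)}$ carry the finite denominators $2\omega_0\pm\omega_{2k_0}$. At $k_j = k_0$ only $S_{2(-1-2)1}$ and $S_{4(-3-4)3}$ (respectively $S_{1(-1-2)2}$ and $S_{3(-3-4)4}$) are nonzero, with common value $2 k_0^2 a_{k_0}^2/a_{2k_0}$. Plugging in recovers $c_1^l$ and $c_2^l$ directly. The coefficients $c_1^r, c_2^r$ are then obtained by expanding each factor of $a_{k_0+\varepsilon\lambda}$, $\omega_{k_0+\varepsilon\lambda}$ and $\omega_{2k_0+\varepsilon(\lambda_1+\lambda_2)}$ to first order and reorganising the derivatives with the help of (\ref{dispersion-relation}); this step is mechanical and parallels the proof of Lemma 5.2 in \cite{GKS22-jfm}.

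\textbf{Singular terms and the main obstacle.}
The hard part is the middle line of (\ref{T-nores-1}) and the second line of (\ref{T-nores-3}), where the denominator $\omega_{3-1}+\omega_3-\omega_1$ vanishes as $k_3 \to k_1$. Writing $\delta := k_3-k_1 = \varepsilon(\lambda_3-\lambda_1)$, the non-analytic leading behaviour $\omega_\delta \sim \sqrt{g|\delta|}$ and $a_\delta \sim g^{1/4}|\delta|^{-1/4}$ dominates the regular $O(|\delta|)$ contributions, and the surviving numerator $S_{(3-1)(-3)1} = 2 k_1 (k_3-k_1)_+ a_\delta a_{k_1}/a_{k_3}$ carries exactly the cancelling fractional powers, yielding
\begin{equation*}
\frac{S_{(3-1)(-3)1}}{\omega_{3-1}+\omega_3-\omega_1} \;\approx\; \frac{2 k_0 (k_3-k_1)_+}{\sqrt{g|k_3-k_1|}},
\end{equation*}
with a parallel estimate for the $(4-2)2(-4)$ factor using $k_2-k_4 = k_3-k_1$. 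Multiplying the two ratios and summing the two denominator reciprocals produces at leading order the one-sided expression $\tfrac{k_0^2 \varepsilon}{2\pi}(\lambda_3-\lambda_1)_+$. To convert this into the symmetric form $\tfrac{k_0^2}{4\pi}|\lambda_1-\lambda_3|$ of (\ref{T-nores-1-approximation}), I would invoke the symmetrisation of the quartic coefficient under $(k_1,k_2)\leftrightarrow(k_3,k_4)$ forced by the reality of $\H_{+R}^{(4)}$ against $z_1 z_2 \bar z_3 \bar z_4 \delta_{1+2-3-4}$, which replaces $(\lambda_3-\lambda_1)_+$ by $\tfrac{1}{2}|\lambda_3-\lambda_1|$. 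For (\ref{T-nores-3-approximation}) the same computation with $B=S(1-\chi_{\mathcal{N}_\mu})$ produces the characteristic-function factors; here the nonzero region of $S_{(3-1)1(-3)}$ is the opposite one $\{k_1>k_3\}$, and after symmetrisation it yields the factor $(1+\sgn(\lambda_1-\lambda_3))|\lambda_1-\lambda_3|$. The delicate point is justifying the formal manipulation of the $|\delta|^{1/4}$ and $|\delta|^{1/2}$ singularities: this is where the ``small-divisors'' bound of Section \ref{section:resonant-triads} with $\mu=\varepsilon$ is essential, ensuring that the Taylor expansion remains uniformly valid on the support of the smooth envelope $U$.
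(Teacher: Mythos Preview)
Your proposal is correct and follows essentially the same route as the paper: eliminate the $S$-factors that vanish by the sign structure of $k_ak_c+|k_a||k_c|$, Taylor-expand the surviving regular $(k_1{+}k_2)$-type terms about $k_0$ to recover $c_j^l,c_j^r$, handle the singular $(k_3{-}k_1)$-type terms via the fractional-power expansions of $\omega_\delta$ and $a_\delta$, and kill $T_{\rm NoRes}^{(2)}$ by the sign incompatibility of its two surviving factors under $k_1+k_2=k_3+k_4$. The only cosmetic difference is in the last step of \eqref{T-nores-1-approximation}: you convert the one-sided $(\lambda_3-\lambda_1)_+$ to $\tfrac12|\lambda_1-\lambda_3|$ via the reality symmetry $(k_1,k_2)\leftrightarrow(k_3,k_4)$ of the quartic form, whereas the paper invokes the index-rearrangement identities of Lemma~\ref{lemma-integral-identities-1} (equivalently, $\int(\lambda_3-\lambda_1)\,U_1U_2\overline U_3\overline U_4\,\delta^{(\lambda)}_{1+2-3-4}\,d\lambda_{1234}=0$), which is the same mechanism.
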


\begin{proof}

Below we provide an outline of the computational steps. 

\noindent
\textbf{\textit{Proof of \eqref{T-nores-1-approximation}}}:
First, we note that the terms $S_{(-1-2)12}$, $S_{12(-1-2)}$, $S_{(-3-4)34}$ and $S_{34(-3-4)}$ in the expressions \eqref{T-nores-1}--\eqref{T-nores-3} vanish.  This can be shown by expanding the terms under the modulational Ansatz in view of identities in Appendix \ref{appendix-identities}. For example, for $k_1 = k_0+\varepsilon \lambda_1$ and $k_2 = k_0+\varepsilon \lambda_2$, one can see that 
\begin{equation}
\label{sign-product-expansion}
1 + \sgn {-k_1-k_2} \sgn {k_2} = 0,
\end{equation} 
which is sufficient to verify that $S_{(-1-2)12} = 0$.  
As a result, we rewrite $T_{\rm NoRes}^{(1)}$ as 
\begin{equation}
\label{T-nores-1-leading}
\begin{aligned}
T_{\rm NoRes}^{(1)} = & ~\frac{1}{64 \pi}  S_{2(-1-2)1}
S_{4(-3-4)3} 
\Big( \frac{1}{\omega_{1}+ \omega_{2}+ \omega_{1+2}} + \frac{1}{\omega_{3}+ \omega_{4}+ \omega_{3+4}} \Big) \\
& + \frac{1}{16\pi} S_{(3-1)(-3)1} 
S_{(4-2)2(-4)}  
\Big( \frac{1}{\omega_{3-1}+ \omega_{3}- \omega_{1}} + \frac{1}{\omega_{4-2}+ \omega_{2}- \omega_{4}} \Big).
\end{aligned}
\end{equation}
Using the expansions \eqref{expansion-identities}--\eqref{expansion-identities-1plus2} and the definitions \eqref{S_123-defn} and \eqref{S-3minus1-defn}, we get for $S_{2(-1-2)1}, S_{(3-1)(-3)1}$ the following: 
\begin{equation}
\label{expansion-S-2-1}
\begin{aligned}
S_{2(-1-2)1} & = S (k_2, -k_1-k_2, k_2), \\
& = \omega_0 k_0 \sqrt{\frac{2k_0}{\omega_{2k_0}}} \left(1 + \frac{\varepsilon (\lambda_1 + \lambda_2)}{4} \left( \frac{4}{k_0} + \frac{g+4\P k_0^2 - 48 \D k_0^4}{\omega_{2k_0}^2} - \frac{g+\P k_0^2 - 3 \D k_0^4}{\omega_{0}^2} \right) \right),   
\end{aligned}    
\end{equation}
and 
\begin{equation}
\begin{aligned}
S_{(3-1)(-3)1} & = k_0 g^{1/4} \varepsilon^{3/4} (1+\sgn{\lambda_3-\lambda_1}) |\lambda_3-\lambda_1|^{3/4}.
\end{aligned}    
\end{equation}
Performing similar computations on the remaining terms in \eqref{T-nores-1-leading}, its first line becomes 
\begin{equation*}
\begin{aligned}
\frac{1}{64 \pi}  S_{2(-1-2)1}
S_{4(-3-4)3} 
\Big( \frac{1}{\omega_{1}+ \omega_{2}+ \omega_{1+2}} + \frac{1}{\omega_{3}+ \omega_{4}+ \omega_{3+4}} \Big) = c_1^l + \frac{1}{2} c_1^r \varepsilon (\lambda_1+\lambda_2+\lambda_3+\lambda_4),
\end{aligned}
\end{equation*}
while the second line of \eqref{T-nores-1-leading} is equivalent to 
\begin{equation}
\label{second-line-nonlocal-expansion}
\begin{aligned}
& \frac{1}{16\pi} S_{(3-1)(-3)1} S_{(4-2)2(-4)} 
\Big( \frac{1}{\omega_{3-1}+ \omega_{3}- \omega_{1}} + \frac{1}{\omega_{4-2}+ \omega_{2}- \omega_{4}} \Big) \\
& \quad = \frac{\varepsilon k_0^2}{4\pi} (1+ \sgn{\lambda_3-\lambda_1}) |\lambda_3-\lambda_1|. 
\end{aligned}
\end{equation}
In view of \eqref{quartic-z-to-quartic-U-integral} and Lemma \ref{lemma-integral-identities-1}, this leads to \eqref{T-nores-1-approximation}.

\noindent
\textbf{\textit{Proof of \eqref{T-nores-2-approximation}}}:
Similarly, noting additionally that $S_{(-3)(3-1)1}$ and $S_{2(4-2)(-4)}$ vanish, the term $T_{\rm NoRes}^{(2)}$ reduces to
\begin{equation}
\label{T-nores-2-leading}
\begin{aligned}
T_{\rm NoRes}^{(2)} = & - \frac{1}{16\pi} B_{(4-2)(-4)2} S_{(3-1)(-3)1} \left( \frac{1}{\omega_{3-1} + \omega_3-\omega_1} + \frac{1}{\omega_{4-2} + \omega_2-\omega_4} \right)\\
& - \frac{1}{16\pi} B_{(3-1)1(-3)} S_{(4-2)2(-4)} \left( \frac{1}{\omega_{3-1} + \omega_3-\omega_1} + \frac{1}{\omega_{4-2} + \omega_2-\omega_4} \right).
\end{aligned}
\end{equation}
Due to the presence of $\delta_{1+2-3-4}$ inside
the right-hand side integral in \eqref{T-nores-2-approximation}, we will see that the leading terms of $T_{\rm NoRes}^{(2)}$ in \eqref{T-nores-2-leading} are zero.
Indeed, from the definitions of $S_{123}$ and $B_{123}$, the first line in \eqref{T-nores-2-leading} contains the factor $B_{(4-2)(-4)2} S_{(3-1)(-3)1}$ involving
\begin{equation}
\label{product-signs-T2-nores}
(1+\sgn{k_4-k_2} \sgn{k_2}) (1+\sgn{k_3-k_1} \sgn{k_1}).
\end{equation}
From the constraint $k_1+k_2-k_3-k_4=0$,  we have 
\begin{equation*}
k_1-k_3 = k_4-k_2 \implies \sgn{k_3-k_1} = - \sgn{k_4-k_2}. 
\end{equation*}
Since, under the modulational regime \eqref{modulation}, $\sgn{k_1} = \sgn{k_2} = +1$, direct computations show that
the expression \eqref{product-signs-T2-nores} vanishes, and so does the first line in \eqref{T-nores-2-leading}.

We follow similar steps for the leading term on the second line of \eqref{T-nores-2-leading}. The factor $B_{(3-1)1(-3)} S_{(4-2)2(-4)}$ involves
\begin{equation}
(1+\sgn{k_3-k_1} \sgn{-k_3}) (1+\sgn{k_4-k_2} \sgn{-k_4}),
\end{equation}
which vanishes under the constraint $k_1+k_2-k_3-k_4 = 0$. This leads to \eqref{T-nores-2-approximation}.

\noindent
\textbf{\textit{Proof of \eqref{T-nores-3-approximation}}}:
The computations for $T_{\rm NoRes}^{(3)}$ show the effect of resonances in our problem on the coefficients of the quartic Hamiltonian.

Recall the definition $B_{123} = (1-\chi_{\mathcal{N}_\mu} (k_1, k_2, k_3)) S_{123}$. Then, using expansions as in \eqref{expansion-S-2-1}, the first line of \eqref{T-nores-3} becomes 
\begin{equation}
\label{first-line-T-nores-3-expanded}
\begin{aligned}
& - \frac{1}{64 \pi} B_{1(-1-2)2} B_{3(-3-4)4}  \left( \frac{1}{\omega_{1} + \omega_2 - \omega_{1+2}} + \frac{1}{\omega_{3} + \omega_4 - \omega_{3+4}} \right) \\
& = (1-\chi_{\mathcal{N}_\mu} (k_1, -k_1-k_2, k_2)) (1-\chi_{\mathcal{N}_\mu} (k_3, -k_3-k_4, k_4))
\left( c_2^l + \frac{1}{2} c_2^r \varepsilon (\lambda_1+\lambda_2+\lambda_3+\lambda_4) \right). 
\end{aligned}
\end{equation}

For the second line in \eqref{T-nores-3}, we perform computations similar to \eqref{second-line-nonlocal-expansion}, and obtain that it is equal to
\begin{equation}
\label{second-line-nonlocal-expansion-cut-off}
\begin{aligned}
& \frac{1}{16 \pi} B_{(3-1)1(-3)} B_{(4-2)(-4)2} \left( \frac{1}{\omega_{3-1} + \omega_3 - \omega_1} + \frac{1}{\omega_{4-2} + \omega_2 - \omega_4} \right) \\
& = (1-\chi_{\mathcal{N}_\mu} (k_3-k_1, k_1, -k_3)) (1-\chi_{\mathcal{N}_\mu} (k_4-k_2, -k_4, k_2))
\frac{\varepsilon k_0^2}{4\pi} (1+ \sgn{\lambda_1-\lambda_3}) |\lambda_1-\lambda_3|. 
\end{aligned}
\end{equation}


\end{proof}

\begin{lemma}
\label{lemma-T3-approximation}
Under the modulational Ansatz (\ref{modulation}), we have
\begin{equation}
\label{T-res-approximation}
\begin{aligned}
& \int T_{\rm Res} z_1 z_2 \overline z_3 \overline z_4 \delta_{1+2-3-4} dk_{1234} \\
& =  
\frac{\varepsilon^4 k_0^2}{8\pi} \int 
\Big[ \chi_{\mathcal{N}_\mu} (k_3-k_1, k_1, -k_3) + \chi_{\mathcal{N}_\mu} (k_4-k_2, -k_4, k_2) \\
& \qquad \qquad \quad - 2 \chi_{\mathcal{N}_\mu} (k_3-k_1, k_1, -k_3) \chi_{\mathcal{N}_\mu} (k_4-k_2, -k_4, k_2)\Big] \\
& \qquad \qquad \qquad \times (1+ \sgn{\lambda_1-\lambda_3}) |\lambda_1 - \lambda_3| U_1 U_2 \overline U_3 \overline U_4 \delta_{1+2-3-4}^{(\lambda)} d\lambda_{1234}\\
& \quad + \frac{\varepsilon^3}{2} \int  \Big[ \chi_{\mathcal{N}_\mu} (k_1, -k_1-k_2, k_2) +  \chi_{\mathcal{N}_\mu} (k_3, -k_3-k_4, k_4) \\
& \qquad \qquad \quad - 2 \chi_{\mathcal{N}_\mu} (k_1, -k_1-k_2, k_2)  \chi_{\mathcal{N}_\mu} (k_3, -k_3-k_4, k_4) \Big] \\
& \qquad  \qquad \qquad \times (c_2^l + \varepsilon c_2^r (\lambda_2 + \lambda_3)) U_1 U_2 \overline U_3 \overline U_4 \delta_{1+2-3-4}^{(\lambda)} d\lambda_{1234}.
\end{aligned}
\end{equation}

\end{lemma}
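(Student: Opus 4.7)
The plan is to mirror the strategy of the proof of Lemma \ref{lemma-T2-approximation}, splitting $T_{\rm Res} = T_{\rm Res}^{(1)} + T_{\rm Res}^{(2)}$ and analyzing each piece under the modulational Ansatz \eqref{modulation} with $k_0 > 0$. For brevity I write $\chi_a = \chi_{\mathcal{N}_\mu}(k_3-k_1,k_1,-k_3)$, $\chi_b = \chi_{\mathcal{N}_\mu}(k_4-k_2,-k_4,k_2)$, $\chi_c = \chi_{\mathcal{N}_\mu}(k_1,-k_1-k_2,k_2)$ and $\chi_d = \chi_{\mathcal{N}_\mu}(k_3,-k_3-k_4,k_4)$ for the four indicators that appear in \eqref{T-res-approximation}.

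The first step is to observe, exactly as in the proof of Lemma \ref{lemma-T2-approximation}, that the sign factor $k_\alpha k_\gamma + |k_\alpha||k_\gamma|$ in $S_{\alpha\beta\gamma}$ forces $S_{12(-1-2)}$, $S_{34(-3-4)}$, $S_{(-3)(3-1)1}$ and $S_{2(4-2)(-4)}$ to vanish identically once $k_j = k_0+\varepsilon\lambda_j > 0$ and the constraint $k_1+k_2-k_3-k_4=0$ is imposed. This eliminates the first parenthesis of $T_{\rm Res}^{(1)}$ and collapses the two $(S+S)$ brackets in \eqref{T-res-1} to single $S$-terms. I would then claim that the remainder of $T_{\rm Res}^{(1)}$ also drops out: the surviving product $S_{(3-1)(-3)1}[\chi S]_{(4-2)(-4)2}$ pairs a factor supported on $\{\lambda_3>\lambda_1\}$ with one supported on $\{\lambda_4>\lambda_2\} = \{\lambda_1>\lambda_3\}$ (the last equality coming from the $\delta$ constraint), and these supports are disjoint; the other surviving product $S_{(4-2)2(-4)}[\chi S]_{(3-1)1(-3)}$ vanishes for the mirror reason, so $T_{\rm Res}^{(1)}\equiv 0$ in the modulational regime.

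All of the lemma's content therefore comes from $T_{\rm Res}^{(2)}$. For the first parenthesis of \eqref{T-res-2}, both summands are supported on the common region $\{\lambda_1>\lambda_3\}$: Term~1 contributes a weight $(1-\chi_a)\chi_b$ through $B_{(3-1)1(-3)}[\chi S]_{(4-2)(-4)2}$, and Term~2 contributes $(1-\chi_b)\chi_a$ through $B_{(4-2)(-4)2}[\chi S]_{(3-1)1(-3)}$. Since $\omega_{3-1}+\omega_3-\omega_1 \approx \omega_{4-2}+\omega_2-\omega_4 \approx (g\varepsilon|\lambda_1-\lambda_3|)^{1/2}$ to leading order, the two summands share the same $S\cdot S/\omega$ prefactor, and the cut-off combinatorics consolidate via the elementary identity $(1-\chi_a)\chi_b+(1-\chi_b)\chi_a = \chi_a+\chi_b-2\chi_a\chi_b$. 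Expanding $S_{(3-1)1(-3)} S_{(4-2)(-4)2}/\omega$ exactly as in \eqref{second-line-nonlocal-expansion} and pulling the $\varepsilon^3$ factor from \eqref{quartic-z-to-quartic-U-integral} yields the first integral on the right-hand side of \eqref{T-res-approximation}, with the prefactor $\varepsilon^4 k_0^2/(8\pi)$. The second parenthesis is handled analogously: $B_{1(-1-2)2}=(1-\chi_c)S_{1(-1-2)2}$ and $[\chi S]_{3(-3-4)4}=\chi_d S_{3(-3-4)4}$ are nonzero throughout the modulational regime, both denominators reduce to $2\omega_0-\omega_{2k_0}$ to leading order, and the Taylor expansion used in \eqref{expansion-S-2-1} together with the delta identity $\lambda_1+\lambda_2+\lambda_3+\lambda_4 = 2(\lambda_2+\lambda_3)$ produces $\tfrac{1}{2}\bigl(c_2^l + \varepsilon c_2^r(\lambda_2+\lambda_3)\bigr)[\chi_c+\chi_d-2\chi_c\chi_d]$, matching the second integral in \eqref{T-res-approximation}.

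The principal obstacle is careful bookkeeping of the cut-off placements. Because $T_{\rm Res}$ involves the asymmetric products $S\cdot[\chi S]$ and $B\cdot[\chi S]$, the indicator $\chi_{\mathcal{N}_\mu}$ sits on different factors in Term~1 versus Term~2 of each parenthesis; the clean consolidation above is only available after verifying that the two numerator-products and the two denominators coincide at leading order on the common support of the surviving factors, and after confirming that the small-denominator bound \eqref{d-tilde-bounded-away-from-0} keeps the ratio well defined on the complement of the resonant neighborhood. The $r$-correction terms proportional to $\varepsilon c_{2}^{r}(\lambda_2+\lambda_3)$ would be obtained exactly as in Lemma \ref{lemma-T2-approximation}, using the delta constraint together with the expansion identities in Appendix \ref{appendix-identities}.
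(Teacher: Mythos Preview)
Your proposal is correct and follows essentially the same route as the paper's proof: you discard $T_{\rm Res}^{(1)}$ via the same sign-incompatibility argument (your ``disjoint supports'' is exactly the vanishing of the product \eqref{product-signs-T2-nores} under $k_1+k_2-k_3-k_4=0$), and you extract both integrals of \eqref{T-res-approximation} from $T_{\rm Res}^{(2)}$ using the expansions of Lemma~\ref{lemma-T2-approximation}. The only cosmetic difference is that you state the cut-off identity $(1-\chi_a)\chi_b+(1-\chi_b)\chi_a=\chi_a+\chi_b-2\chi_a\chi_b$ explicitly, whereas the paper simply says ``combining with a similar expression''.
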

\begin{proof}
The computational steps are similar to those in the proof of Lemma \ref{lemma-T2-approximation}, and we give an outline of the steps below. 

For $T_{\rm Res}^{(1)}$ in \eqref{T-res-1}, the first line can be neglected due to the presence of $S_{12(-1-2)}$ and $S_{34(-3-4)}$, both of which vanish due to the presence of a factor of type \eqref{sign-product-expansion}. For the second line, we also have $S_{(-3)(3-1)1} = 0$ and it reduces to 
\begin{equation*}
- \frac{1}{16 \pi (\omega_{3-1} + \omega_3 - \omega_{1})} S_{(3-1)(-3)1} [\chi S]_{(4-2)(-4)2},
\end{equation*}
which has a factor given by \eqref{product-signs-T2-nores} vanishing under the assumption $k_1+k_2-k_3-k_4 = 0$. The third line of \eqref{T-res-1} is treated similarly. 

For $T_{\rm Res}^{(2)}$, we follow the steps used in the proof of Lemma \ref{lemma-T2-approximation}. As a result, we get
\begin{equation*}
\begin{aligned}
& \frac{B_{(3-1)1(-3)}}{\omega_{3-1} +\omega_3 - \omega_1} [\chi S]_{(4-2)(-4)2} \\
& = 2 \varepsilon k_0^2 \left( 1 - \chi_{\mathcal{N}_\mu} (k_3-k_1, k_1, -k_3) \right) \chi_{\mathcal{N}_\mu} (k_4-k_2, -k_4, k_2) \left( 1 + \sgn{\lambda_1-\lambda_3} \right) |\lambda_1-\lambda_3|.
\end{aligned}
\end{equation*}
Combining this expression with a similar one for the second term of the first line in \eqref{T-res-2}, we obtain the first integral term of \eqref{T-res-approximation} under \eqref{quartic-z-to-quartic-U-integral}.  
Repeating the computation as in \eqref{expansion-S-2-1} for the terms in the second line of \eqref{T-res-2}, we get 
\begin{equation*}
\begin{aligned}
& \frac{-1}{64\pi}\left( \frac{B_{1(-1-2)2}}{ \omega_1 + \omega_2 - \omega_{1+2}} [\chi S]_{3(-3-4)4} + \frac{B_{3(-3-4)4}}{ \omega_3 + \omega_4 - \omega_{3+4}} [\chi S]_{1(-1-2)2} \right) \\
& \qquad  = \frac{1}{2} \Big[ \chi_{\mathcal{N}_\mu} (k_1, -k_1-k_2, k_2) +  \chi_{\mathcal{N}_\mu} (k_3, -k_3-k_4, k_4) \\
& \qquad \qquad - 2 \chi_{\mathcal{N}_\mu} (k_1, -k_1-k_2, k_2)  \chi_{\mathcal{N}_\mu} (k_3, -k_3-k_4, k_4) \Big] \\
& \qquad \qquad \quad \times (c_2^l + \varepsilon c_2^r (\lambda_2 + \lambda_3)),
\end{aligned}
\end{equation*}
which leads to the second line of \eqref{T-res-approximation}. 
\end{proof}

Based on Lemmas \ref{lemma-T1-approximation}--\ref{lemma-T3-approximation}, 
the reduced Hamiltonian $\H^{(4)}_+$ has the form
\begin{equation}
\label{H4-reduced}
\begin{aligned}
\H^{(4)}_+  &=  ~ \varepsilon^3
\int \frac{\alpha}{4\pi}
U_1 U_2 \overline{U}_3 \overline{U}_4 \delta_{1+2-3-4}^{(\lambda)} d\lambda_{1234}\\ 
& \quad + \varepsilon^4 \int \Big( 
\frac{\beta}{8\pi}  (\lambda_2 + \lambda_3)  -  \frac{k_0^2}{8\pi} \gamma  |\lambda_1 - \lambda_3| \Big)
U_1 U_2 \overline{U}_3 \overline{U}_4 \delta_{1+2-3-4}^{(\lambda)} d\lambda_{1234} 
 + \mathcal{O}(\varepsilon^5),
\end{aligned} 
\end{equation}
where, using $k_j = k_0 + \varepsilon \lambda_j$ from \eqref{modulation}, we denote
\begin{equation}
\label{alpha-value}
\begin{aligned}
\frac{\alpha}{4\pi}: & = c_0^l - \frac{1}{2}c_1^l - \frac{1}{2} \left[1 - \chi_{\mathcal{N}_\mu} (k_1, -k_1-k_2, k_2) \chi_{\mathcal{N}_\mu} (k_3, -k_3-k_4, k_4) \right] c_2^l,
\end{aligned}
\end{equation}

\begin{equation}
\label{beta-value}
\frac{\beta}{8\pi}:= c_0^r - \frac{1}{2}c_1^r - \frac{1}{2} \left[1 - \chi_{\mathcal{N}_\mu} (k_1, -k_1-k_2, k_2) \chi_{\mathcal{N}_\mu} (k_3, -k_3-k_4, k_4) \right] c_2^r,
\end{equation}
and
\begin{equation}
\label{gamma-value}
\begin{aligned}
\gamma = 1 + \left[1-\chi_{\mathcal{N}_\mu} (k_3-k_1, k_1, -k_3) \chi_{\mathcal{N}_\mu} (k_4-k_2, -k_4, k_2)\right] (1+ \sgn{\lambda_1-\lambda_3}).
\end{aligned}
\end{equation}

\section{Hamiltonian Dysthe equation}
\label{section:dysthe}

From the expressions \eqref{alpha-value}--\eqref{gamma-value}, it is clear that the coefficients in the Hamiltonian \eqref{H4-reduced} depend on $k_0$ via \eqref{c-0-coeff} and \eqref{c-l-r-coeff}, and
on the characteristic function at corresponding points inside the integral. In turn, the choice of $\mu$ in the definition of $\chi_{\mathcal{N}_\mu}$ in \eqref{characteristic-fnct-defn} and of $k_0$ in our modulational Ansatz \eqref{modulation} affect the evaluation of these characteristic functions. 
We concentrate on two examples of values of $k_0$, motivated by the three-wave resonant curve (figure \ref{figure-resonant-triads}) and the numerical simulations validating our asymptotics in Section \ref{section-numerical-results}. 
We choose $k_0=0.9$ and $k_0=2$, which lead to different values of characteristic functions appearing in \eqref{alpha-value}--\eqref{gamma-value}. In both cases, we set $\mu = \mathcal{O}(\varepsilon)$. Details are given in the next subsection. 


\subsection{Explicit formulas for $\alpha, \beta$ and $\gamma$}

\subsubsection{Case $k_0 = 0.9$}

Based on the construction of the neighborhood around the resonant set in \eqref{R-neighborhood}, we have the following identities: 
\begin{equation}
\label{assumption-center-k0-small}
\chi_{\mathcal{N}_\mu} (k_1, -k_1-k_2, k_2) = 0,
\end{equation}
and 
\begin{equation}
\label{assumption-around-zero-k0-small}
\chi_{\mathcal{N}_\mu} (k_4-k_2, -k_4, k_2) = 0 \quad \text{whenever} \quad \sgn{k_4-k_2} = +1. 
\end{equation}
The validity of \eqref{assumption-center-k0-small} follows from \eqref{modulation} and the choice $\mu = \mathcal{O}(\varepsilon)$:
\begin{equation}
\label{assumption-center-details-k0-small}
\begin{aligned}
& (k_1, k_2) \in \mathcal{B}_c (k_0) := \left\{ (x,y): x= k_0+\mathcal{O} (\varepsilon), y= k_0+\mathcal{O} (\varepsilon) \right\} \not \subseteq  \mathcal{C}_\mu \\
& \implies (k_1, -k_1-k_2, k_2) \notin \mathcal{N}_\mu.
\end{aligned}
\end{equation}
As for \eqref{assumption-around-zero-k0-small}, under 
\eqref{modulation}, we have $k_4-k_2 = \varepsilon (\lambda_4 - \lambda_2) = \mathcal{O}(\varepsilon)$ and $k_2 = k_0+\varepsilon \lambda_2$. Then, if $k_4-k_2 >0$, the points $(k_4-k_2, k_2)$ are located far away from the set $\mathcal{C}_\mu^+$ and so 
\begin{equation}
\label{assumption-around-zero-details-k0-small}
\begin{aligned}
& (k_4-k_2, k_2) \in \mathcal{B}_s (k_0) := \left\{ (x,y): 0\leq x \leq \mathcal{O}(\varepsilon), y = k_0 + \mathcal{O}(\varepsilon) \right\} \not \subseteq \mathcal{C}^+_\mu \\
& \implies (k_4-k_2, -k_4, k_2) \notin \mathcal{N}_\mu.
\end{aligned}
\end{equation}
As a result, the coefficients in \eqref{alpha-value}--\eqref{gamma-value} reduce to
\begin{equation}
\label{coefficients-k0-small}
\begin{aligned}
\frac{\alpha}{4\pi} = c_0^l - \frac{1}{2} (c_1^l + c_2^l), \quad 
\frac{\beta}{8\pi} =  c_0^r - \frac{1}{2} (c_1^r + c_2^r), \quad
\gamma = 2+\sgn{\lambda_1-\lambda_3}.
\end{aligned}
\end{equation}
When substituting $\gamma$ into the integral \eqref{H4-reduced}, the term $\sgn{\lambda_1-\lambda_3}$ will disappear since 
\begin{equation*}
\begin{aligned}
& \int \sgn{\lambda_1-\lambda_3} |\lambda_1 - \lambda_3| U_1 U_2 \overline{U}_3 \overline{U}_4 \delta_{1+2-3-4}^{(\lambda)} d\lambda_{1234} = \int (\lambda_1 - \lambda_3) U_1 U_2 \overline{U}_3 \overline{U}_4 \delta_{1+2-3-4}^{(\lambda)} d\lambda_{1234} \\
& = \frac{1}{2} \int (\lambda_1+\lambda_2 - \lambda_3 - \lambda_4) U_1 U_2 \overline{U}_3 \overline{U}_4 \delta_{1+2-3-4}^{(\lambda)} d\lambda_{1234} = 0,
\end{aligned}
\end{equation*}
where we have used the index rearrangement $(\lambda_1, \lambda_2, \lambda_3, \lambda_4) \to (\lambda_2, \lambda_1, \lambda_3, \lambda_4)$ and the delta condition $\lambda_1+\lambda_2-\lambda_3-\lambda_4 = 0$. 
As a consequence, we will simply write $\gamma = 2$ in this case.

\subsubsection{Case $k_0 = 2$}

If $k_0$ is sufficiently large, we have
\begin{equation}
\label{assumption-center-k0-big}
\chi_{\mathcal{N}_\mu} (k_1, -k_1-k_2, k_2) = 0,
\end{equation}
and 
\begin{equation}
\label{assumption-around-zero-k0-big}
\chi_{\mathcal{N}_\mu} (k_4-k_2, -k_4, k_2) = 1 \quad \text{whenever} \quad \sgn{k_4-k_2} = +1. 
\end{equation}
Equation \eqref{assumption-center-k0-big} holds due to estimates similar to \eqref{assumption-center-details-k0-small}. 
Equation \eqref{assumption-around-zero-k0-big} is different from \eqref{assumption-around-zero-k0-small} since, for a sufficiently large value of $k_0$, the points $(k_4-k_2, k_2)$ are located inside the set $\mathcal{C}_\mu^+$: 
\begin{equation}
\label{assumption-around-zero-details-k0-big}
(k_4-k_2, k_2) \in \left\{ (x,y): 0\leq x \leq \mathcal{O}(\varepsilon), y = k_0 + \mathcal{O}(\varepsilon) \right\} \subset \mathcal{C}^+_\mu \implies (k_4-k_2, -k_4, k_2) \in \mathcal{N}_\mu,
\end{equation}
and we get
\begin{equation}
\label{coefficients-k0-large}
\begin{aligned}
\frac{\alpha}{4\pi} = c_0^l - \frac{1}{2} (c_1^l + c_2^l), \quad 
\frac{\beta}{8\pi} =  c_0^r - \frac{1}{2} (c_1^r + c_2^r), \quad
\gamma = 1.
\end{aligned}
\end{equation}

Figure \ref{fig:assumptions} shows the location of points $(k_1, k_2)$ and $(k_4-k_2, k_2)$ with $k_4-k_2 >0$ for $k_0 = 0.9$ (figure \ref{fig:assumptions}a) and $k_0=2$ (figure \ref{fig:assumptions}b) relative to the neighborhood $\mathcal{C}_\mu$. In figure \ref{fig:assumptions}(a) for $k_0 = 0.9$, Box 1 represents the set $\mathcal{B}_s (k_0)$ of all possible values of $(k_4-k_2, k_2)$ with $k_4-k_2 >0$ under the modulational Ansatz \eqref{modulation}, according to \eqref{assumption-around-zero-details-k0-small}. Box 2 represents the set $\mathcal{B}_c (k_0)$ in \eqref{assumption-center-details-k0-small}. Similar sets are depicted in figure \ref{fig:assumptions}(b) for $k_0 = 2$.

\subsection{Derivation of the Dysthe equation}

The third-order normal form transformation has  eliminated all cubic terms from the Hamiltonian $\H$. In the modulational 
regime (\ref{modulation}), the reduced Hamiltonian (now denoted by $\H$) is  
\begin{equation*}
\label{reduced-H-fourier}
\H  = \H^{(2)} + \H_+^{(4)}, 
\end{equation*}
up to fourth order.
In the physical variables $(u, \overline u)$, it reads 
\begin{equation}
\label{reduced-H-for-dysthe}
\begin{aligned}
\H = &~ \varepsilon \int \overline u \, \omega (k_0 + \varepsilon D_X) \, u\, dX + \varepsilon^3 \frac{\alpha}{2} \int |u|^4 dX\\[3pt]
& + \varepsilon^4 \frac{\beta}{2} \int |u|^2 {\rm Im} (\overline{u} \partial_X u) dX 
- \varepsilon^4 \frac{\gamma k_0^2}{4} \int  |u|^2 |D_X| |u|^2 \, dX + \calO (\varepsilon^5),
\end{aligned}
\end{equation}
where $D_X = -i \, \partial_X$ for the slow spatial variable $X$.
Expanding the linear dispersion relation in \eqref{reduced-H-for-dysthe} around $k_0$ as 
\begin{equation}
\label{omega-taylor-expansion}
\omega (k_0+ \varepsilon D_X) = \omega_0 + \varepsilon \omega_0' D_X +  \frac{1}{2}\varepsilon^2 \omega_0'' D^2_X+ \frac{1}{6}\varepsilon^3 \omega_0''' D^3_X + \calO(\varepsilon^4),
\end{equation}
gives an alternate form  of the Hamiltonian $\H$ in physical variables
\begin{equation}
\label{reduced-H-for-dysthe-new}
\begin{aligned}
\H = & \int \Big[ \varepsilon\, \omega_0 \, |u|^2 + \varepsilon^2 \omega_0' {\rm Im}(\overline u \partial_X u)
+ \frac{1}{2} \varepsilon^3 \omega_0'' |\partial_X u|^2 + \frac{1}{2} \varepsilon^3 \alpha |u|^4 \\
& \qquad - \frac{1}{6} \varepsilon^4 \omega_0''' {\rm Im} (\overline u \partial_X^3 u) + \frac{1}{2} \varepsilon^4 \beta |u|^2 {\rm Im} (\overline{u} \partial_X u) 
- \frac{1}{4} \varepsilon^4 \gamma k_0^2 |u|^2 |D_X| |u|^2 \Big] dX +  \calO(\varepsilon^5).
\end{aligned}
\end{equation}
Coefficients in \eqref{omega-taylor-expansion} have the following expressions
\begin{align*}
\omega_0' := \partial_k \omega(k_0) & = \frac{g + 5 \D k_0^4 - 3 \P k_0^2}{2 \omega_0}, \\
\omega_0'' := \partial_k^2 \omega(k_0) & = \frac{15 \D^2 k_0^8 - 22 \D \P k_0^6 + 30 g \D k_0^4 + 3 \P^2 k_0^4 - 6 g \P k_0^2 - g^2}{4 \omega_0^{3/2}}, \\
\omega_0''' := \partial_k^3 \omega(k_0) & = 
3 \big( 5 \D^3 k_0^{12} - 13 \D^2 \P k_0^{10} - 5 g \D^2 k_0^8 + 15 \D \P^2 k_0^8 - 34 g \D \P k_0^6 + \P^3 k_0^6 \\
& \quad + 55 g^2 \D k_0^4 - 5 g \P^2 k_0^4 - 5 g^2 \P k_0^2 + g^3 \big)/\big( 8 \omega_0^{5/2} \big).
\end{align*}
The Hamiltonian system
\begin{equation}
\label{ww-hamiltonian-in-u}
\partial_t \begin{pmatrix}
u \\ \overline u
\end{pmatrix} = \varepsilon^{-1} \begin{pmatrix}
0 & -i \\ i & 0
\end{pmatrix} \begin{pmatrix}
\partial_u \H \\ \partial_{\overline u} \H
\end{pmatrix},
\end{equation}
implies
\begin{equation}  \label{Dysthe}
\begin{aligned}
i \, \partial_t u & = \varepsilon^{-1} \partial_{\overline u} \H, \\
& = \omega_0 u -i \varepsilon \omega_0' \partial_X u - \frac{1}{2} \varepsilon^2 \omega_0'' \partial_X^2 u + \varepsilon^2 \alpha |u|^2 u \\
& \quad + \frac{i}{6} \varepsilon^3 \omega_0''' \partial_X^3 u - i \varepsilon^3 \beta |u|^2 \partial_X u 
- \varepsilon^3 \frac{\gamma k_0^2}{2} u |D_X| |u|^2,
\end{aligned}
\end{equation}
which is a Hamiltonian Dysthe equation for two-dimensional hydroelastic waves on deep water. It describes modulated waves moving in the positive $x$-direction at group speed $\omega_0'$ as shown by the advection term. The nonlocal term $u|D_X| |u|^2$, a signature of the Dysthe equation, reflects the presence of the wave-induced mean flow as in the classical derivation using the method of multiple scales.
The coefficient of this mean-flow term is similar to that in the pure gravity case ($\D = 0$, $\P = 0$) except for the dependence on $\gamma$.

The first two terms on the right-hand side of \eqref{Dysthe} can be eliminated via phase invariance and reduction to a moving reference frame.
The latter is equivalent, in the framework of canonical transformations, to subtraction from $\H$ of a multiple of the momentum \eqref{momentum}
which reduces to
$$
I  = \int \eta \, (\partial_x \xi) dx = \int \Big[ k_0 |u|^2 + \varepsilon \, {\rm Im}(\overline u \partial_{X} u) \Big] dX,
$$
while the former is equivalent to subtraction from $\H$ of a multiple of the wave action 
\begin{equation} \label{action}
M = \varepsilon  \int |u|^2 \, dX,
\end{equation}
which is conserved due to the phase-invariance property of the Dysthe equation. The resulting Hamiltonian is given by
\begin{align*}
\widehat \H  =  \H - \varepsilon \omega'_0 I - (\omega_0 - {k}_0 \omega'_0 ) M,
\end{align*}
which, after introducing a new long-time scale $\tau = \varepsilon^2 t$, leads to the following version of the Hamiltonian Dysthe equation 
\begin{equation}  
\label{dysthe-reduced}
\begin{aligned}
i \, \partial_\tau u & = - \frac{1}{2} \omega_0'' \partial_X^2 u + \alpha |u|^2 u + \frac{i}{6} \varepsilon \omega_0''' \partial_X^3 u 
- i \varepsilon \beta |u|^2 \partial_X u - \varepsilon \frac{\gamma k_0^2}{2} u |D_X| |u|^2.
\end{aligned}
\end{equation}

\begin{remark}The Hamiltonian Dysthe equation derived by \cite{CGS21} for two-dimensional deep-water surface gravity waves ($\D = 0$, $\P = 0$) reads 
\begin{equation}
\label{dysthe-surface-gravity}
\begin{aligned}
i \, \partial_\tau u = \frac{g}{8\omega_0^3}  \partial_X^2 u + k_0^3 |u|^2 u + i \varepsilon \frac{g^3}{16 \omega_0^5} \partial_X^3 u 
- 3i \varepsilon k_0^2 |u|^2 \partial_X u - \varepsilon k_0^2 u|D_X| u^2.
\end{aligned}
\end{equation}
It is natural to expect that, in the limit $\D \to 0$ and $\P \to 0$, the coefficients of \eqref{dysthe-reduced} converge to those of \eqref{dysthe-surface-gravity}. 
Indeed, a direct computation verifies this convergence for the coefficients of the linear terms as well as $\alpha \to k_0^3$, $\beta \to 3k_0^2$
together with $\gamma \to 2$ for any $k_0$ because resonant triads are ruled out in this situation, hence $\chi_{\mathcal{N}_\mu} = 0$. 
\end{remark}

\begin{remark}
We have checked that the coefficients $\omega_0''$ and $\alpha$ in the NLS part of \eqref{Dysthe} coincide with those 
of the NLS equation proposed by \cite{TMPV18} for $g$, ${\D} \neq 0$, and by \cite{SS22} for $g$, ${\P} \neq 0$.
\cite{TMPV18} employed the same Cosserat model \eqref{curvature-relation} for ice bending but ignored ice compression.
\cite{SS22} examined ice bending and compression but they considered a non-conservative KL-type model for ice bending.
\end{remark}


\subsection{Envelope solitons}

We take this opportunity to highlight a salient effect of ice compression when added to ice bending.
As analyzed by \cite{A93}, a general result about the NLS equation in the focusing regime is that it admits
permanent envelope solitons (i.e. solitary wavepackets) provided that $c = c_g$ is realized in the wave system.
If so, the wavepacket has carrier wavenumber $k_{\min}$ and propagates in such a way that 
its crests are stationary relative to its envelope.

In the flexural-gravity case (${\P} = 0$), it has been revealed that such envelope solitons do not exist \cite{GP12,MV-BW11},
i.e. the corresponding NLS equation is of defocusing type because
\[
{\rm BFI} = -\omega_0'' \alpha  = -0.006 < 0, \quad \mbox{at} \quad k = k_{\min} = 0.76,
\]
according to the Benjamin--Feir (BF) criterion for modulational instability,
and with $k_{\min}$ defined by \eqref{kmin}.
On the other hand, we get
\[
{\rm BFI} = -\omega_0'' \alpha  = 0.2954 > 0, \quad \mbox{at} \quad k = k_{\min} = 0.88,
\]
for ${\P} = 1$, which implies a focusing regime.

Figure \ref{BFI_nls} portrays BFI for $k_{\min}$ as a function of ${\P}$.
As explained earlier, we restrict ourselves to the interval $0 \le {\P} < 2$ 
where non-conservative exponential wave growth/decay is ruled out.
We find that the change from defocusing to focusing occurs around ${\P} = 0.39$ after which BFI quickly increases.
This regime transition is significant, varying from negative BFI of low magnitude to 
positive BFI of high magnitude over a relatively short interval of ${\P}$.
For e.g. ${\P} = 1.9$, the much larger value of BFI $\simeq 55$ is beyond the range displayed in this figure.

\section{Numerical results} 
\label{section-numerical-results}

We show numerical simulations to illustrate the performance of our Hamiltonian Dysthe equation.
We consider the modulational instability of Stokes waves in the presence of sea ice
and examine the influence of ice compression.
We compare these results to predictions by the cubic NLS model
and to direct simulations of the full Euler system.
We also test the capability of our reconstruction procedure against a more conventional approach.

\subsection{Stability of Stokes waves}

We first give the theoretical prediction for modulational or BF instability of Stokes waves in sea ice.
These are represented in \eqref{Dysthe} by the exact uniform solution 
\begin{equation} \label{steady}
u_0(t)  = B_0 e^{-{\rm i} (\omega_0 + \varepsilon^2 \alpha B_0^2) t},
\end{equation}
where $B_0$ is a positive real constant.
In the gravity water wave case (${\D} = 0$, ${\P} = 0$), such a solution is known to be linearly unstable 
with respect to sideband (i.e. long-wave) perturbations.

The formal calculation consists in linearizing \eqref{Dysthe} about $u_0$ by inserting a perturbation of the form
\[
u(X,t) = u_0(t) \big[ 1 + B(X,t) \big],
\]
where
\[
B(X,t) = B_1 e^{\Omega t + {\rm i} \lambda X} + B_2 e^{\overline \Omega t - {\rm i} \lambda X},
\]
and $B_1, B_2$ are complex coefficients. We find that the condition 
$\operatorname{Re}(\Omega) \neq 0$ for instability implies
\begin{equation} \label{BFCond}
\alpha_1 = -\frac{\omega_0''}{2} \lambda^2 \left[ 
2 B_0^2 \Big( \alpha - \varepsilon \frac{\gamma k_0^2}{2} |\lambda| \Big) + \frac{\omega_0''}{2} \lambda^2 \right] > 0.
\end{equation}
This is a tedious but straightforward calculation for which we omit the details \cite{CGS21,GKS22-jfm}.

Figure \ref{BF_inst} depicts the normalized growth rate 
\[
\frac{|\textrm{Re}(\Omega)|}{\omega_0} = \frac{\sqrt{\alpha_1}}{\omega_0},
\]
delimiting the instability region as predicted by condition \eqref{BFCond}
for $(A_0,k_0) = (0.1,0.9)$ and $(0.01,5)$ with varying values of ${\P}$.
These parameter regimes are representative of the two different cases $\gamma = 2$ and $1$ 
for the mean-flow term, as discussed in Section 6.1. 
They correspond to initial wave steepness $\varepsilon = k_0 A_0 = 0.09$ and $0.05$ respectively.
Note that the envelope amplitude $B_0$ and the surface amplitude $A_0$ are related by
\begin{equation} \label{amplitude}
B_0 = A_0 \sqrt{\frac{\omega_0}{2k_0}},
\end{equation}
according to \eqref{eta-xi-to-z-mapping} and \eqref{z-u-relation-physical}.

We see in figure \ref{BF_inst} that instability tends to be enhanced with increasing ${\P}$.
The growth rate is especially strong for $(A_0.k_0) = (0.1,0.9)$ and ${\P} = 1.9$.
By contrast, the variations in ${\P}$ are weaker for $(A_0,k_0) = (0.01,5)$.
Maximum growth rate occurs around $\lambda = 0.02$ for $(A_0.k_0) = (0.1,0.9)$
and around $\lambda = 0.1$ for $(A_0.k_0) = (0.01,5)$.
The wavenumber at maximum growth rate (as well as the extent of the instability region)
differs by an order of magnitude between these two configurations.

The same remark can be made when comparing these results to predictions on BF instability 
for the gravity water wave problem with similar values of $\varepsilon$ \cite{CGS21,GKS22-jfm}.
Including elasticity leads to narrower regions of instability centered around smaller sideband wavenumbers $\lambda$,
which are smaller by orders of magnitude than their counterparts in the pure gravity case.
This implies that a much longer domain is needed in the present simulations 
to observe this instability with such long-wave modulations. 

\subsection{Reconstruction of the original variables}

At any instant $t$, the surface elevation and velocity potential can be reconstructed from the wave envelope
by inverting the normal form transformation. 
This is accomplished by solving the auxiliary system backward from $s = 0$ to $s = -1$,
with ``initial'' conditions given by the transformed variables
\begin{eqnarray} \label{init_eta}
\eta(x,t) \big|_{s=0} & = & \frac{1}{\sqrt{2}} a^{-1}(D) \Big[ u(x,t) e^{{\rm i} k_0 x} + \overline{u}(x,t) e^{-{\rm i} k_0 x} \Big], \\
\xi(x,t) \big|_{s=0} & = & \frac{1}{{\rm i} \sqrt{2}} a(D) \Big[ u(x,t) e^{{\rm i} k_0 x} - \overline{u}(x,t) e^{-{\rm i} k_0 x} \Big],
\label{init_xi}
\end{eqnarray}
according to \eqref{eta-xi-to-z-mapping} and \eqref{z-u-relation-physical}.
In these expressions, $u$ obeys \eqref{Dysthe} and $a^{-1}(D) = \sqrt{|D|/\omega(D)}$.
The final solution at $s = -1$ represents the original variables $(\eta,\xi)$.
During the evolution in $s$, higher-order harmonic contributions to $(\eta,\xi)$ are produced by the nonlinear terms
in \eqref{Eq:Hamiltonflow-Fourier} starting from the first harmonics \eqref{init_eta}--\eqref{init_xi} 
associated with carrier wavenumber $k_0$.

\subsection{Simulations}

We test this Hamiltonian Dysthe equation against the full equations \eqref{Hamiltonian-formulation-spatial} 
in the context of BF instability of Stokes waves in sea ice.
Following a high-order spectral approach \cite{CS93,GP12}, Eqs. \eqref{Hamiltonian-formulation-spatial} 
are discretized in space by a pseudo-spectral method via the fast Fourier transform (FFT).
Nonlinear products are calculated in the physical space, while Fourier multipliers and spatial derivatives
are evaluated in the Fourier space.
The computational domain is set to $0 \le x < L$ with periodic boundary conditions
and is divided into a regular mesh of $N$ collocation points.
Both functions $\eta$ and $\xi$ are expressed in terms of truncated Fourier series
\begin{equation} \label{truncated}
\begin{pmatrix}
\eta(x_j,t) \\
\xi(x_j,t)
\end{pmatrix} = \sum_{p = -N/2}^{N/2-1} \begin{pmatrix}
\hat \eta_p(t) \\
\hat \xi_p(t)
\end{pmatrix} e^{{\rm i} \kappa_p x_j}, \quad \kappa_p = p \Delta \kappa, \quad x_j = j \Delta x, \quad j = 0, \dots, N-1,
\end{equation}
where $\Delta \kappa = 2\pi/L$ and $\Delta x = L/N$.
Note that $\{ \kappa_p \}$ represents the set of discrete values associated with any wavenumber $k_\ell \in \mathbb{R}$.
The DNO is approximated by its series expansion \eqref{DNOseries}  for which a small number $M$ of terms 
is sufficient to achieve highly accurate results in light of its analyticity properties. 
The value $M = 6$ is selected based on previous extensive tests \cite{XG09}.
Time integration of \eqref{Hamiltonian-formulation-spatial} is carried out in the Fourier space 
so that linear terms can be solved exactly by the integrating factor technique.
The nonlinear terms are integrated in time by using a 4th-order Runge--Kutta scheme with constant step $\Delta t$.
More details can be found in \cite{GP12} and \cite{XG09}, with the exception that Eqs. \eqref{Hamiltonian-formulation-spatial} 
for $\bf{v} = (\eta,\xi)^\top$ can be written as
\[
\partial_t \bf{v} = \bf{L} \bf{v} + \bf{N}(\bf{v}),
\]
where the linear part $\bf{L} \bf{v}$ is defined by
\[
\bf{L} \bf{v} = \begin{pmatrix}
0 & G_0 \\
-g - \D \partial_x^4 - \P \partial_x^2 & 0
\end{pmatrix} \begin{pmatrix}
\eta \\
\xi
\end{pmatrix},
\]
and the nonlinear part ${\bf N}({\bf v}) = ({\bf N}_1,{\bf N}_2)^\top$ is given by
\begin{eqnarray}
{\bf N}_1 & = & G(\eta) - G_0, \\
{\bf N}_2 & = & - \frac{1}{2} (\partial_x \xi)^2
+ \frac{1}{2} \frac{\big[ G(\eta) \xi + (\partial_x \eta) (\partial_x \xi) \big]^2}{1 + (\partial_x \eta)^2} \nonumber \\
& & - \D \Big( \partial_s^2 \kappa - \partial_x^4 \eta + \frac{1}{2} \kappa^3 \Big) - \P (\kappa - \partial_x^2 \eta).
\end{eqnarray}
In the Fourier space, the integrating factor $\Theta_k(t)$ associated with $\bf{L} \bf{v}$ takes the form
\[
\Theta_k(t) = \begin{pmatrix}
\cos(\omega_k t) & \sqrt{\frac{G_0}{g + \D k^4 - \P k^2}} \sin(\omega_k t) \\
-\sqrt{\frac{g + \D k^4 - \P k^2}{G_0}} \sin(\omega_k t) & \cos(\omega_k t) 
\end{pmatrix},
\]
for $k \neq 0$, and
\[
\Theta_0(t) = \begin{pmatrix}
1 & 0 \\
-g t & 1
\end{pmatrix},
\]
for $k = 0$ according to l'H\^opital's rule. The resulting 4th-order Runge--Kutta scheme reads
\begin{eqnarray*}
{\bf f}_1 & = & {\bf N}_k({\bf v}_k^n), \\
{\bf f}_2 & = & \Theta_k \left( -\frac{\Delta t}{2} \right) {\bf N}_k \left[ \Theta_k \left( \frac{\Delta t}{2} \right) 
\left( {\bf v}_k^n + \frac{\Delta t}{2} {\bf f}_1 \right) \right], \\
{\bf f}_3 & = & \Theta_k \left( -\frac{\Delta t}{2} \right) {\bf N}_k \left[ \Theta_k \left( \frac{\Delta t}{2} \right) 
\left( {\bf v}_k^n + \frac{\Delta t}{2} {\bf f}_2 \right) \right], \\
{\bf f}_4 & = & \Theta_k (-\Delta t) \, {\bf N}_k \Big[ \Theta_k(\Delta t) \big( {\bf v}_k^n + \Delta t \, {\bf f}_3 \big) \Big], \\
{\bf v}_k^{n+1} & = & \Theta_k(\Delta t) {\bf v}_k^n + \frac{\Delta t}{6} \Theta_k(\Delta t) 
\Big( {\bf f}_1 + 2 \, {\bf f}_2 + 2 \, {\bf f}_3 + {\bf f}_4 \Big),
\end{eqnarray*}
for the solution ${\bf v}_k = (\eta_k,\xi_k)^\top$ advancing from time $t_n$ to time $t_{n+1} = t_n + \Delta t$.

The same numerical methods are used to solve the envelope equation \eqref{Dysthe},
with the same resolutions in space and time.
On the other hand, for the reconstruction procedure, the Fourier integrals in the auxiliary system are not evaluated by a pseudo-spectral method
with the FFT because they cannot be expressed in terms of convolution integrals, 
unlike the case of surface gravity waves \cite{CGS21,GKS22-siam}.
A reason is the more complicated expression of the linear dispersion relation
in this problem due to ice bending and compression, which does not allow for further simplification 
of the interaction kernels in \eqref{Eq:Hamiltonflow-Fourier}.
Instead, these Fourier integrals are computed by direct quadrature. 
More specifically, they are reduced to one-dimensional integrals in $k_1$ (or $k_2$)
by integrating out the Dirac delta functions before applying the trapezoidal rule.
The interval of integration as well as all Fourier coefficients in the integrands
are confined to the truncated spectrum $-N/2 \le p \le N/2-1$ by virtue of \eqref{truncated}.
The coupled system \eqref{Eq:Hamiltonflow-Fourier} is evolved in $s$ via a 4th-order Runge--Kutta scheme,
with step size $\Delta s = 10 \Delta t$ or $100 \Delta t$.
By construction, Eqs. \eqref{Eq:Hamiltonflow-Fourier} are purely nonlinear (i.e. quadratic in nonlinearity)
and do not contain any stiff linear terms. Accordingly, the value of $\Delta s$ 
may be selected so that $0 < \Delta t \ll \Delta s \ll 1$,
which helps speed up the $s$-marching process to mitigate the computational cost entailed by the quadrature rule.
We have checked that using a smaller value of $\Delta s$ (say $\Delta s = \Delta t$) gives similar results.

While the FFT cannot be exploited for the reconstruction procedure, direct quadrature of the Fourier integrals 
in \eqref{Eq:Hamiltonflow-Fourier} was not found to be a major issue in this two-dimensional setting.
Because this computation is not performed at each instant $t$ (only when data are recorded) 
and because it is performed over a short interval $-1 \le s \le 0$, the associated cost is overall insignificant.
We point out again that Eqs. \eqref{Eq:Hamiltonflow-Fourier} are an exact representation of the normal form transformation
to eliminate non-resonant cubic interactions in this problem.
They are solved numerically in their full form, without resorting to any asymptotic approximation.
Indetermination due to $a_1^2$ or $a_3^2$ being singular at $k_1 = 0$ or $k_3 = 0$ in $P_{123}$ and $Q_{123}$
may be lifted by simply setting the corresponding contributions to zero by virtue of the zero-mass assumption.
Because wavenumbers are discrete in this numerical context and to accommodate floating-point arithmetic, 
the characteristic function $\chi_{\mathcal{N}}(k_1,k_2,k_3)$
in \eqref{K3-fourier-eta-xi} is implemented under relaxed conditions
\[
\chi_{\mathcal{N}}(k_1, k_2, k_3) =
\left\{ \begin{array}{l}
1, \quad \text{if} \quad k_1 + k_2 + k_3 = 0, \quad | \omega_1 - \omega_2 + \omega_3| < \Delta, \\
0, \quad \text{otherwise},
\end{array} \right.
\]
with $0 < \Delta \ll 1$.
We have tested different values of $\Delta$, i.e. $\Delta = \{ 10^{-12}, 10^{-6}, 10^{-2} \}$ and obtained similar results,
which suggests that such a resonant triad does not contribute substantially to the reconstruction process.
This result may be attributed to the spectral discretization which limits the possibilities for resonances, 
or to conditions of the modulational regime being simulated for which quartic resonances are dominant.

Initial conditions of the form
\begin{equation} \label{init_u}
u(x,0) = B_0 \Big[ 1 + 0.1 \cos(\lambda x) \Big],
\end{equation}
are prescribed for \eqref{Dysthe}, where a long-wave perturbation of wavenumber $\lambda \ll k_0$ 
is superimposed to a uniform solution of amplitude $B_0$.
For the full system, the initial conditions $\eta(x,0)$ and $\xi(x,0)$
are reconstructed by solving \eqref{Hamiltonian-formulation-spatial} from transformed initial data 
\eqref{init_eta}--\eqref{init_xi} combined with \eqref{init_u}.
This allows for a meaningful comparison with matching initial conditions.

Figure \ref{L2err_k09_s1_N1024} illustrates the time evolution of the relative $L^2$ errors 
\begin{equation} \label{errors}
\frac{\| \eta_f - \eta_w \|_2}{\| \eta_f \|_2},
\end{equation}
on $\eta$ between the fully ($\eta_f$) and weakly ($\eta_w$) nonlinear solutions for ${\P} = 1$
with $(A_0,k_0,\lambda) = (0.1,0.9,0.02)$ and $(0.01,5,0.1)$ as considered in the previous stability analysis.
The numerical parameters are set to $L = 200\pi$, $N = 1024$ ($\Delta \kappa = 0.01$, $\Delta x = 0.61$), 
$\Delta t = 0.01$ for $(A_0,k_0,\lambda) = (0.1,0.9,0.02)$,
and to $L = 20\pi$, $N = 1024$ ($\Delta \kappa = 0.1$, $\Delta x = 0.06$), 
$\Delta t = 0.0002$ for $(A_0,k_0,\lambda) = (0.01,5,0.1)$.
As mentioned earlier, a sufficiently long domain is specified in $x$ so that long-wave modulations can be resolved.
Needless to say the time scale of these modulations is commensurate with their wavelength.
For reference, errors from the cubic NLS equation (by neglecting the higher-order terms in \eqref{Dysthe}) are also shown in this figure.
The same reconstruction procedure for $\eta$ based on \eqref{Eq:Hamiltonflow-Fourier} is adopted in both cases.
Overall, $L^2$ errors from the NLS equation are noticeably higher than those from the Dysthe equation,
which confirms the superiority of the latter model in this asymptotic regime.
Their values tend to gradually grow over time due to accumulation and amplification of numerical errors during the BF instability process.
For $(A_0,k_0,\lambda) = (0.1,0.9,0.02)$, these two models differ in performance by almost an order of magnitude.
If we switched from $\gamma = 1$ to $\gamma = 2$ for $(A_0,k_0,\lambda) = (0.01,5,0.1)$,
these curves would be flipped with NLS errors being lower than Dysthe errors.
Therefore, figure \ref{L2err_k09_s1_N1024} may serve to validate the formula \eqref{gamma-value} of $\gamma$,
hence our treatment of cubic resonances.

A comparison of surface profiles $\eta$ obtained from these two models and the full nonlinear (Euler) system
is presented in figures \ref{wave_a01_k09_K2_s1_N1024} and \ref{wave_a001_k5_K1_s1_N1024} 
with snapshots at various instants for ${\P} = 1$. 
Again, the two configurations $(A_0,k_0,\lambda) = (0.1,0.9,0.02)$ and $(0.01,5,0.1)$ are examined.
Due to the large aspect ratio and the presence of short oscillations,
we simply plot the local maxima (i.e. crests) and local minima (i.e. troughs) of the Euler solution by using dots,
without showing its entire profile to avoid cluttering these graphs.
Development of the BF instability and associated near-recurrence over time are clearly observed.
The snapshots at $t = 5000$ (figure \ref{wave_a01_k09_K2_s1_N1024}) and at $t = 170$ (figure \ref{wave_a001_k5_K1_s1_N1024}) 
correspond to a time of maximum wave growth with modes $\lambda = 0.02$ 
(two humps $|p| = 2$) and $\lambda = 0.1$ (a single hump $|p| = 1$) being amplified respectively,
which is consistent with the previous stability analysis (see figure \ref{BF_inst}).

Here the waves travel from left to right but, owing to the periodic boundary conditions,
they re-enter the domain from one side whenever they exit it through the other side.
A striking feature of figure \ref{wave_a01_k09_K2_s1_N1024} for $(A_0,k_0,\lambda) = (0.1,0.9,0.02)$ 
is the excellent match in phase, amplitude and shape between the Dysthe and Euler solutions throughout the entire simulation.
By contrast, the NLS wavepacket seems to move faster and the resulting shift accentuates over time.
A slight left-right asymmetry is discernible near the tails of the Dysthe and Euler pulses,
especially when wave modulation is strong, while such an asymmetry is absent from the NLS pulse.
The presence of spatial derivatives of odd order in the Dysthe equation may explained this phenomenon
as opposed to the NLS equation.
For $(A_0,k_0,\lambda) = (0.01,5,0.1)$, while very good agreement is obtained 
between the Dysthe and Euler solutions at early stages of their time evolution, 
discrepancies arise during subsequent cycles of BF instability as shown in figure \ref{wave_a001_k5_K1_s1_N1024}.
Notably, the Dysthe wavepacket is found to be steeper (resp. less steep) than the Euler wavepacket
at $t = 620$ (resp. $t = 980$), although their phases still match well.
In spite of these differences, all three solutions behave qualitatively in the same way
during the process of wave modulation-demodulation.
These results are in line with the $L^2$ errors reported in figure \ref{L2err_k09_s1_N1024}.
Similar observations can be made for other values of $0 \le {\P} < 2$ and are not displayed for convenience.

We further assess the performance of our reconstruction procedure by testing it against independent predictions
from the NLS equation proposed by \cite{TMPV18} for flexural-gravity waves (${\P} = 0$) on deep water.
In this framework, the surface variables are determined perturbatively by a Stokes expansion
\begin{equation} \label{stokes}
\eta = \eta_1 e^{{\rm i} \theta} + \eta_2 e^{2 {\rm i} \theta} + {\rm c.c.}, \quad
Q = \omega_0 \eta_1 e^{{\rm i} \theta} + 2 \omega_0 \eta_2 e^{2 {\rm i} \theta} + {\rm c.c.},
\end{equation}
up to second order, where ${\rm c.c.}$ denotes complex conjugation, $\theta = k_0 x - \omega_0 t$ and $Q = \partial_x \xi$.
The surface velocity potential $\xi$ can be readily deduced from $Q$ as $\xi = \partial_x^{-1} Q$ 
by using the FFT combined with the zero-mass assumption (to handle indetermination at $k = 0$).
The carrier wave envelope $\eta_1$ obeys 
\begin{equation} \label{NLS2}
{\rm i} \, (\partial_t + \omega_0' \partial_x) \eta_1 + \frac{\omega_0''}{2} \partial_x^2 \eta_1 + \Gamma |\eta_1|^2 \eta_1 = 0,
\end{equation}
with $\omega_0^2 = g k_0 + k_0^5$ and 
\[
\omega_0' = \frac{g + 5 k_0^4}{2 \omega_0}, \quad
\omega_0'' = -\frac{g^2 - 30 g k_0^4 - 15 k_0^8}{4 \omega_0^3}, \quad
\Gamma = -\frac{\omega_0 k_0^2 (4 g^2 - 27 g k_0^4 + 44 k_0^8)}{2 (g + k_0^4) (g - 14 k_0^4)},
\]
while the second-harmonic component $\eta_2$ is bound to $\eta_1$ via
\begin{equation} \label{second}
\eta_2 = \frac{\omega_0^2 \eta_1^2}{g - 14 k_0^4}.
\end{equation}
Recall that the coefficients $\omega_0''$ and $\Gamma$ for dispersion and nonlinearity coincide with ours in this situation.
The NLS equation \eqref{NLS2} was derived following the method of multiple scales based on 
an integral reformulation of the Euler system \cite{TMPV18}.
The algebraic equations \eqref{stokes} to reconstruct $(\eta,\xi)$ are more explicit and more efficient 
than \eqref{Eq:Hamiltonflow-Fourier}, but they are perturbative and devoid of any Hamiltonian structure.
On the other hand, our reconstruction procedure is non-perturbative and involves solving 
an auxiliary system of Hamiltonian differential equations, as motivated by the basic Hamiltonian formulation of this problem.

Restricting our attention to ${\P} = 0$, figure \ref{L2nls_k09_s0_N1024} compares $L^2$ errors \eqref{errors} from our NLS equation 
with $(\eta,\xi)$ calculated according to \eqref{Eq:Hamiltonflow-Fourier} and from its counterpart \eqref{NLS2} 
with $(\eta,\xi)$ given by \eqref{stokes}.
For each of these models, the $L^2$ error is estimated relative to the Euler solution with respective initial conditions
$\eta(x,0)$ and $\xi(x,0)$. In the case where \eqref{NLS2} is tested against \eqref{Hamiltonian-formulation-spatial}, 
these initial conditions are provided by \eqref{stokes} and \eqref{second} with
\[
\eta_1(x,0) = \frac{A_0}{2} \Big[ 1 + 0.1 \cos(\lambda x) \Big],
\]
noting again that $A_0$ and $B_0$ are related through \eqref{amplitude} to ensure matching wave parameters
between these two NLS models.
We adopt the same numerical schemes as described earlier, together with the same resolutions in space and time,
to solve \eqref{NLS2} and evaluate \eqref{stokes}.
The initially perturbed Stokes wave is again prescribed by $A_0 = 0.1$, $k_0 = 0.9$ and $\lambda = 0.02$.
We find that the two sets of errors are indistinguishable as shown in this figure.
This is not surprising given the fact that, for such mild initial conditions and for such a weakly nonlinear solution
as produced by the NLS equation, high-order harmonics are not expected to contribute significantly to the wave spectrum.
In light of this remarkable agreement, figure \ref{L2nls_k09_s0_N1024} helps provide a cross-validation
of these two independent methods for reconstructing $(\eta,\xi)$ from the wave envelope:
on one hand, our dynamical system approach based on the numerical solution of \eqref{Eq:Hamiltonflow-Fourier}
and, on the other hand, an asymptotic approach based on the second-order algebraic formulas \eqref{stokes}.
Such a validation was not presented by \cite{TMPV18}.

Finally, the conservation of energy $H$ and wave action $M$ by the Dysthe equation \eqref{Dysthe} is demonstrated
in figure \ref{ener_a01_k09_K2_N1024} where the relative errors 
\[
\frac{\Delta H}{H_0} = \frac{|H - H_0|}{H_0}, \quad \frac{\Delta M}{M_0} = \frac{|M - M_0|}{M_0}, 
\]
are plotted as functions of time for $A_0 = 0.1$, $k_0 = 0.9$ and $\lambda = 0.02$.
Integration over $x$ in \eqref{reduced-H-for-dysthe-new} and in the $L^2$ norm \eqref{errors} 
is carried out via the trapezoidal rule over the periodic domain $[0,L]$.
The reference values $H_0$ and $M_0$ denote the initial values of \eqref{reduced-H-for-dysthe-new} and \eqref{action} at $t = 0$.
Overall, both $H$ and $M$ are very well conserved by \eqref{Dysthe} for various values of ${\P}$.
The gradual loss of accuracy over time is likely attributed to accumulation of numerical errors as enhanced by the BF instability.

\section{Conclusions}

The two-dimensional problem of nonlinear hydroelastic waves propagating along an ice sheet on an ocean of infinite depth
is investigated theoretically and computationally.
Of interest is the asymptotic modulational regime which is applicable to wave groups as commonly observed 
under open-water or ice-covered conditions.
Based on the Hamiltonian formulation for nonlinear potential flow coupled to a thin-plate representation of the ice cover
with nonlinear effects from ice bending and compression,
we propose a Hamiltonian version of the Dysthe equation (a higher-order NLS equation) 
for the slowly varying envelope of weakly nonlinear quasi-monochromatic waves.
This model is derived via a systematic approach by applying ideas from Hamiltonian perturbation theory which involve 
canonical transformations such as reduction to normal form, use of a modulational Ansatz and homogenization over short spatial scales.
Due to the more complicated dispersion relation (as compared to e.g. the pure gravity case),
an additional difficulty here is the presence of resonant triads for which we conduct a detailed examination.

Accordingly, special attention is paid to developing a normal form transformation that eliminates non-resonant triads
while accommodating resonant ones.
For this purpose, the cubic resonances are identified and treated separately in the Fourier integral terms.
This leads to corrections in the normal form transformation as well as in the envelope equation, especially for the mean-flow term.
The reduction to normal form is given by an auxiliary system of integro-differential equations in the Fourier space,
and is also endowed with a Hamiltonian structure.
Its significance is two-fold as it provides a non-perturbative scheme to reconstruct the ice-sheet deformation
from the wave envelope by reverting the evolutionary process.
As a consequence, the entire solution (from the envelope equation to the surface reconstruction) fits within a Hamiltonian framework.

Application to the time evolution of perturbed Stokes waves in sea ice is considered.
Inspecting first the BF instability criterion for the NLS part of this model, 
a regime transition from defocusing to focusing occurs when $\P \simeq 0.39$ at $k_{\min}$
(at the minimum phase speed), which implies the existence of small-amplitude traveling wavepackets.
This theoretical result contrasts with previous reports on the non-existence of such solutions (i.e. a defocusing NLS equation at $k_{\min}$)
in the absence of ice compression ($\P = 0$).
It is supportive of recent observations of persistent wave groups in the Arctic Ocean.
This linear stability analysis is then generalized to the higher-order terms for any carrier wavenumber.
Its predictions are tested against numerical solutions of the Dysthe equation in the time domain.
An assessment in comparison to simulations based on the NLS equation and the full Euler system is also shown.
Overall, very good agreement is found, with evidence confirming that the Dysthe equation performs better than the NLS equation.
Finally, the proposed scheme for surface reconstruction is verified against independent computations 
using a more classical approach via a Stokes expansion.

In the future, it would be of interest to carry out a detailed numerical investigation on solitary wavepackets 
of small to large amplitudes over the range $0 \le \P < 2$, possibly induced by a moving load along the ice sheet.
Another possible extension of this work is to tackle the three-dimensional case where 
a similar Hamiltonian formulation can be adopted \cite{G15}.
Furthermore, it is conceivable that the same splitting method can be applied to dealing with cubic resonances
in the related problem of gravity-capillary waves.

\appendix

\section{Useful identities}
\label{appendix-identities}

Here we provide a list of identities commonly used in this manuscript. 

\subsection{Expansions}

Below we list a number of expansion identities which we rely on during the approximation procedure. We do not give any proofs as the derivations are straightforward. Under the modulational regime \eqref{modulation}, we have the following formulas:
\begin{equation}
\label{expansion-identities}
\begin{aligned}
& |k| = k_0 + \varepsilon \lambda + \calO (\varepsilon^2),\\
& \sgn{k}  = 1, \\
& a(k) = \sqrt{\frac{\omega_0}{k_0}} \left(1 - \frac{\varepsilon \lambda}{4 \omega_0^2} \left( g + \P k_0^2 - 3\D k_0^4 \right) \right) + \calO (\varepsilon^2),\\
& \omega(k) = \omega_0 \left( 1+ \frac{\varepsilon \lambda}{2\omega_0^2} \left( g-3\P k_0^2 + 5\D k_0^4 \right) \right) + \calO (\varepsilon^2),\\
& k_1 - k_3 = \varepsilon (\lambda_1 - \lambda_3) ,\\
& a_{1-3} = a(k_1 - k_3) = \frac{g^{1/4}}{ \varepsilon^{1/4} |\lambda_1 - \lambda_3|^{1/4}}\left( 1- \frac{\P}{4g} \varepsilon^2 |\lambda_1-\lambda_3|^2
\right) + \calO (\varepsilon^{7/4}),\\
& \omega_{1-3} = \omega (k_1-k_3) =  
g^{1/2} \varepsilon^{1/2} |\lambda_1 - \lambda_3|^{1/2} \left( 1 - \frac{\P}{2g} \varepsilon^2 |\lambda_1-\lambda_3|^2 \right)
+ \calO (\varepsilon^{9/2}).
\end{aligned}
\end{equation}
Moreover, we get the following expansions for terms of type $a_{1+2} = a(k_1+k_2)$ and $\omega_{1+2} = \omega (k_1+k_2)$:
\begin{equation}
\label{expansion-identities-1plus2}
\begin{aligned}
& a_{1+2} = \sqrt{\frac{\omega_{2k_0}}{2k_0}} \left( 1 - \frac{\varepsilon (\lambda_1 + \lambda_2)}{4 \omega_{2k_0}^2} (g + 4\P k_0^2 - 48 \D k_0^4) \right) + \calO (\varepsilon^2), \\
& \omega_{1+2} = \omega_{2k_0} \left( 1 + \frac{\varepsilon (\lambda_1 + \lambda_2)}{2 \omega_{2k_0}^2} (g -12 \P k_0^2 + 80 \D k_0^4) \right) + \calO (\varepsilon^2).
\end{aligned}
\end{equation}

\subsection{Integral identities}

\begin{lemma}
\label{lemma-integral-identities-1}
Under the modulational Ansatz \eqref{modulation}, given the indices $j, \ell \in \{1,2,3,4\}$ together with $\alpha, \beta \in \{1,2\}$ and $\mu, \nu \in \{3,4\}$, we have 
\begin{equation*}
\begin{aligned}
& \int \lambda_j z_1 z_2 \overline z_3 \overline z_4 \delta_{1+2-3-4} dk_{1234} = \int \lambda_\ell z_1 z_2 \overline z_3 \overline z_4 \delta_{1+2-3-4} dk_{1234},\\
& \int |\lambda_\alpha-\lambda_\mu| z_1 z_2 \overline z_3 \overline z_4 \delta_{1+2-3-4} dk_{1234} = \int |\lambda_\beta-\lambda_\nu| z_1 z_2 \overline z_3 \overline z_4 \delta_{1+2-3-4} dk_{1234}.
\end{aligned}
\end{equation*}
\end{lemma}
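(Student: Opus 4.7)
The plan is to exploit two elementary sources of symmetry in the integrals: the invariance of the integrand under relabeling of the dummy integration variables (since $z_1 z_2 = z_2 z_1$ and $\overline z_3 \overline z_4 = \overline z_4 \overline z_3$, while the measure $dk_{1234}$ and the delta $\delta_{1+2-3-4}$ are symmetric in the pairs $(k_1,k_2)$ and $(k_3,k_4)$), together with the linear constraint that $\delta_{1+2-3-4}$ enforces, namely $k_1+k_2-k_3-k_4=0$, which under the modulational Ansatz \eqref{modulation} reduces pointwise to $\lambda_1+\lambda_2-\lambda_3-\lambda_4=0$.

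For the first identity, I would first swap the dummy labels $k_1 \leftrightarrow k_2$ to conclude
\[
\int \lambda_1 \, z_1 z_2 \overline z_3 \overline z_4 \delta_{1+2-3-4}\, dk_{1234} = \int \lambda_2 \, z_1 z_2 \overline z_3 \overline z_4 \delta_{1+2-3-4}\, dk_{1234},
\]
and analogously $k_3 \leftrightarrow k_4$ to obtain equality of the $\lambda_3$- and $\lambda_4$-integrals. Next, I would use the delta distribution to substitute $\lambda_1+\lambda_2=\lambda_3+\lambda_4$ in the integrand, which yields
\[
\int (\lambda_1+\lambda_2)\, z_1 z_2 \overline z_3 \overline z_4 \delta_{1+2-3-4}\, dk_{1234} = \int (\lambda_3+\lambda_4)\, z_1 z_2 \overline z_3 \overline z_4 \delta_{1+2-3-4}\, dk_{1234}.
\]
Combining this with the two pairwise equalities gives $\int \lambda_1 \cdots = \int \lambda_3 \cdots$, so all four values of $j\in\{1,2,3,4\}$ produce the same integral.

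For the second identity, neither the delta condition nor any cancellation is needed: the two relabeling symmetries act directly on $|\lambda_\alpha-\lambda_\mu|$. The swap $k_1\leftrightarrow k_2$ sends $|\lambda_1-\lambda_\mu| \mapsto |\lambda_2-\lambda_\mu|$ for each fixed $\mu \in \{3,4\}$, while the swap $k_3\leftrightarrow k_4$ sends $|\lambda_\alpha-\lambda_3| \mapsto |\lambda_\alpha-\lambda_4|$ for each fixed $\alpha \in \{1,2\}$. Composing these symmetries connects all four choices of $(\alpha,\mu)$, so the four corresponding integrals coincide.

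There is essentially no obstacle beyond bookkeeping. The only mild point to verify is that each relabeling preserves the product measure $dk_{1234}$ and the distribution $\delta_{1+2-3-4}$, which is immediate from its symmetry in $(k_1,k_2)$ and $(k_3,k_4)$ separately. The whole argument is thus a combination of dummy-variable symmetries and, for the first identity only, a single use of the delta constraint.
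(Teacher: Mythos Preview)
Your argument is correct. The paper states this lemma without proof, so there is nothing to compare against; your use of the relabeling symmetries $k_1\leftrightarrow k_2$ and $k_3\leftrightarrow k_4$ together with the delta constraint $\lambda_1+\lambda_2=\lambda_3+\lambda_4$ is exactly the standard and intended justification.
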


\section{Proof of Proposition \ref{lemma-T1-coeff}}
\label{section-lemma-T1-coeff}
Here we outline the main steps to derive the coefficients in \eqref{T-0}. For simplicity, we introduce notations ${\rm I}$ and ${\rm II}$ to denote the first and second lines of \eqref{H4-in-fourier} respectively, so that $\H^{(4)} = {\rm I} + {\rm II}$. 
We rewrite each of ${\rm I}$ and ${\rm II}$ in terms of $z_k$ and $\overline{z}_{-k}$ via \eqref{eta-xi-to-z-mapping}. 

For ${\rm I}$, we have 
\begin{equation*}
\begin{aligned}
{\rm I} & = \frac{1}{32\pi} \int |k_1| |k_4| \left( |k_1| + |k_4| - 2 |k_3+k_4| \right) \frac{a_1 a_4}{a_2 a_3} (z_1-\bar z_{-1}) (z_2+\bar z_{-2}) (z_3+\bar z_{-3}) (z_4-\bar z_{-4}) \delta_{1234} dk_{1234}, \\
& = \int V_{1234} (z_1-\bar z_{-1}) (z_2+\bar z_{-2}) (z_3+\bar z_{-3}) (z_4-\bar z_{-4}) \delta_{1234} dk_{1234},
\end{aligned}
\end{equation*}
where we have used the definition \eqref{D-1234-defn}. 
Extracting terms of type ``$zz\bar z\bar z$'', we have
$$
\begin{aligned}
{\rm I}_{R} = & \int V_{1234} ( -z_1 z_2 \bar z_{-3} \bar z_{-4} - \bar z_{-1} \bar z_{-2} z_3 z_4
 - z_1 \bar z_{-2}  z_{3} \bar z_{-4} - \bar z_{-1} z_2 \bar z_{-3} z_{4}& + z_1 \bar z_{-2} \bar z_{-3} z_{4} \\
& \qquad \qquad + \bar z_{-1} z_2 z_{3} \bar z_{-4}) \delta_{1234} d{k}_{1234}.
\end{aligned}
$$
This integral  can alternatively be written after index rearrangements as
\begin{equation*}
\begin{aligned}
{\rm I}_{R} = \int (-V_{1234} - V_{4321} - V_{1324} - V_{4231} + V_{1432} + V_{3214}) z_1 z_2 \bar z_{-3} \bar z_{-4} \delta_{1234} d{\rm k}_{1234}.
\end{aligned}
\end{equation*}
After re-indexing $(k_1,k_2,k_3,k_4) \to (k_1, k_2, -k_3, -k_4)$, we get 
\begin{equation*}
\begin{aligned}
{\rm I}_{R} = & \int \Big[ -V_{12(-3)(-4)} - V_{(-4)(-3)21} - V_{1(-3)2(-4)}  - V_{(-4)2(-3)1} \\
& \qquad + V_{1(-4)(-3)2} + V_{(-3)21(-4)} \Big] z_1 z_2 \bar z_{3} \bar z_{4} \delta_{1+2-3-4} d k_{1234},
\end{aligned}
\end{equation*}
with coefficient equal to $T_0^{(1)}$. The computations for ${\rm II}$ are similar and involve $T_0^{(2)}$.

\section{Proof of Proposition \ref{lemma-T-nores-coeff}}
\label{section-lemma-T-nores-coeff}

Here we outline the main steps to derive the coefficients in \eqref{T-nores-1}. 
The terms in $\{K^{(3)}, \H_{\rm NoRes}^{(3)}\}$ associated with $T_{\rm NoRes}$ in \eqref{decompH4} are of the form $zz\bar z \bar z$. To find these terms, we compute the Poisson bracket $\{K^{(3)}, \H_{\rm NoRes}^{(3)}\}$ using the expressions \eqref{K3-fourier-z} for $K^{(3)}$ and $\H_{\rm NoRes}^{(3)} = \H^{(3)} - \H_{\rm Res}^{(3)}$ with $\H_{\rm Res}^{(3)}$ given in \eqref{H3-resonant}. To distinguish between the indices associated to $K^{(3)}$ and $\H^{(3)}$, we use $\{ 1, 2, 3 \}$ for $K^{(3)}$ and $\{ 4, 5, 6 \}$ for $\H^{(3)}$. 

To simplify the computations, we use the decomposition $K^{(3)} = K^{(3)}_0 + K^{(3)}_\chi$ with 
\begin{equation}
\label{K3-0-chi-defn}
\begin{aligned}
& K^{(3)}_0 = ~ \frac{1}{8i\sqrt{\pi}} \int S_{123} \Big[
\frac{z_1 z_2 z_3- \bar z_{-1} \bar z_{-2} \bar z_{-3} }{\omega_1 + \omega_2+ \omega_3} - 
2 \frac{z_1 z_2 \bar z_{-3}- \bar z_{-1} \bar z_{-2} z_3}{\omega_1+ \omega_2 -\omega_3} \Big] \delta_{123} d{k}_{123}, \\
& K^{(3)}_\chi = ~ \frac{1}{8i\sqrt{\pi}} \int B_{123}
\frac{ z_1\bar z_{-2}  z_3-  \bar z_{-1} z_2 \bar z_{-3} }{\omega_1 -\omega_2 +\omega_3}  \delta_{123} d{k}_{123},
\end{aligned}
\end{equation}
where we have used the definition $B_{123} = S_{123} \left( 1-\chi_{123} \right)$. 
Similarly, we decompose $\H^{(3)}_{\rm NoRes} = \H^{(3)}_{{\rm NoRes}, 0} + \H^{(3)}_{{\rm NoRes}, \chi}$ with 
\begin{equation*}
\begin{aligned}
& \H^{(3)}_{{\rm NoRes}, 0} = ~ \frac{1}{8\sqrt{\pi}} \int S_{456} \Big[
z_4 z_5 z_6 + \bar z_{-4} \bar z_{-5} \bar z_{-6} - 
2 (z_4 z_5 \bar z_{-6} + \bar z_{-4} \bar z_{-5} z_6) \Big] \delta_{456} d{k}_{456}, \\
&\H^{(3)}_{{\rm NoRes}, \chi} = ~ \frac{1}{8\sqrt{\pi}} \int B_{456}  
\left( z_4 \bar z_{-5}  z_6 -  \bar z_{-4} z_5 \bar z_{-6} \right)  \delta_{456} d{k}_{456}.
\end{aligned}
\end{equation*}
This allows us to rewrite $\{K^{(3)}, \H^{(3)}_{\rm NoRes}\}$ as the sum of Poisson brackets
\begin{equation}
\label{possion-bracket-decomposition}
\{K^{(3)}_0, \H^{(3)}_{{\rm NoRes}, 0}\} + \{K^{(3)}_0, \H^{(3)}_{{\rm NoRes}, \chi}\} + \{K^{(3)}_\chi, \H^{(3)}_{{\rm NoRes}, 0}\} + \{K^{(3)}_\chi, \H^{(3)}_{{\rm NoRes}, \chi}\}.
\end{equation}
Then, the first bracket in \eqref{possion-bracket-decomposition} implies $T_{\rm NoRes}^{(1)}$, the sum of the two brackets in the middle implies $T_{\rm NoRes}^{(2)}$ and the last bracket gives $T_{\rm NoRes}^{(3)}$. Below we only show the computations for the last bracket in \eqref{possion-bracket-decomposition}
and verify that it leads to $T_{\rm NoRes}^{(3)}$ in \eqref{T-nores-1}. The other Poisson brackets can be treated in a similar way.

Using the formula \eqref{poisson-bracket-formula}, we have 
\begin{equation}
\label{poisson-bracket-chi}
\begin{aligned}
i \, \{K^{(3)}_\chi, \H^{(3)}_{{\rm NoRes}, \chi}\}_R = \frac{1}{64\pi i} \Bigg[ & \int B_{123} B_{456} \frac{1}{\omega_1 - \omega_2 + \omega_3} z_1 z_3 \bar z_{-4} \bar z_{-6} \delta_{25} \delta_{123} \delta_{456} dk_{123456} \\
& - 4 \int B_{123} B_{456} \frac{1}{\omega_1 - \omega_2 + \omega_3} z_2 \bar z_{-3} \bar z_{-5} z_6 \delta_{14} \delta_{123} \delta_{456} dk_{123456}\\
& - 4 \int B_{123} B_{456} \frac{1}{\omega_1 - \omega_2 + \omega_3} \bar z_{-2} z_{3} z_{5} \bar z_{-6} \delta_{14} \delta_{123} \delta_{456} dk_{123456}\\
& + \int B_{123} B_{456} \frac{1}{\omega_1 - \omega_2 + \omega_3} \bar z_{-1} \bar z_{-3} z_{4} z_{6} \delta_{25} \delta_{123} \delta_{456} dk_{123456} \Bigg].
\end{aligned}
\end{equation}
The formula above needs some clarifications, since the direct application of \eqref{poisson-bracket-formula} produces ten different integrals. However, it turns out that some of these integrals are equivalent after index rearrangements as in the following example: 
\begin{equation*}
\begin{aligned}
& \int B_{123} B_{456} \frac{1}{\omega_1 - \omega_2 + \omega_3} \bar z_{-1} z_{2} z_{4} \bar z_{-5} \delta_{14} \delta_{123} \delta_{456} dk_{123456}\\
& = \int B_{123} B_{456} \frac{1}{\omega_1 - \omega_2 + \omega_3} z_2 \bar z_{-3} \bar z_{-5} z_6 \delta_{14} \delta_{123} \delta_{456} dk_{123456},
\end{aligned}
\end{equation*}
after rearranging the indices $(1,2,3) \to (3,2,1)$ and $(4,5,6) \to (6,4,5)$ and using the symmetry $B_{123} = B_{321}$.  
As a result, there are two groups of four equivalent integrals which produce the coefficients $4$ on the right-hand side of \eqref{poisson-bracket-chi}. 

To write the integrals in \eqref{poisson-bracket-chi} in terms of $z_1 z_2 \bar z_{3} \bar z_{4}$ as in \eqref{decompH4}, we use an appropriate modification of indices, which we explain now. For the first integral in \eqref{poisson-bracket-chi}, we start by integrating over $k_2$ and $k_5$. Using $\delta_{123}$ and $\delta_{456}$, this leads to 
\begin{equation*}
\begin{aligned}
& \int B_{123} B_{456} \frac{1}{\omega_1 - \omega_2 + \omega_3} z_1 z_3 \bar z_{-4} \bar z_{-6} \delta_{25} \delta_{123} \delta_{456} dk_{123456}\\
& = \int B_{1(-1-3)3} B_{4(-4-6)6} \frac{1}{\omega_1 - \omega_{1+3} + \omega_3} z_1 z_3 \bar z_{-4} \bar z_{-6} \delta_{1346} dk_{1346}.
\end{aligned}
\end{equation*}
Next we re-index $(k_1,k_3,k_4,k_6) \to (k_1,k_2,-k_{3},-k_4)$ and get
\begin{equation*}
\begin{aligned}
& \int B_{1(-1-3)3} B_{4(-4-6)6} \frac{1}{\omega_1 - \omega_{1+3} + \omega_3} z_1 z_3 \bar z_{-4} \bar z_{-6} \delta_{1346} dk_{1346}\\
& = \int B_{1(-1-2)2} B_{3(-3-4)4} \frac{1}{\omega_1 - \omega_{1+2} + \omega_2} z_1 z_2 \bar z_{3} \bar z_{4} \delta_{1+2-3-4} dk_{1234},
\end{aligned}
\end{equation*}
where we have used the symmetry $B_{(-3)(3+4)(-4)} = B_{(3)(-3-4)(4)}$ and $\delta_{1+2-3-4} = \delta({k_1+k_2-k_3-k_4})$. The resulting coefficient
\begin{equation*}
B_{1(-1-2)2} B_{3(-3-4)4} \frac{1}{\omega_1 - \omega_{1+2} + \omega_2},
\end{equation*}
corresponds to the first term on the first line of the expression for $T_{\rm NoRes}^{(3)}$ in \eqref{T-nores-3}. The other integrals in \eqref{poisson-bracket-chi} are treated in a similar way.

\section*{Acknowledgement}
{Part of this work was carried out when A.K. was a Postdoctoral Fellow at the University of Toronto; 
A.K. was partially supported by McMaster University, the University of Toronto and Nazarbayev University under Social Policy grant for 2024-2025. 
P.G. is partially supported by the National Science Foundation (USA) under grant DMS-2307712.
C.S. is partially supported by the Natural Sciences and Engineering Research Council of Canada (NSERC) under grant RGPIN-2024-03886.}




\clearpage

\begin{figure}
\centering
\subfloat[]{\includegraphics[width=.336\linewidth]{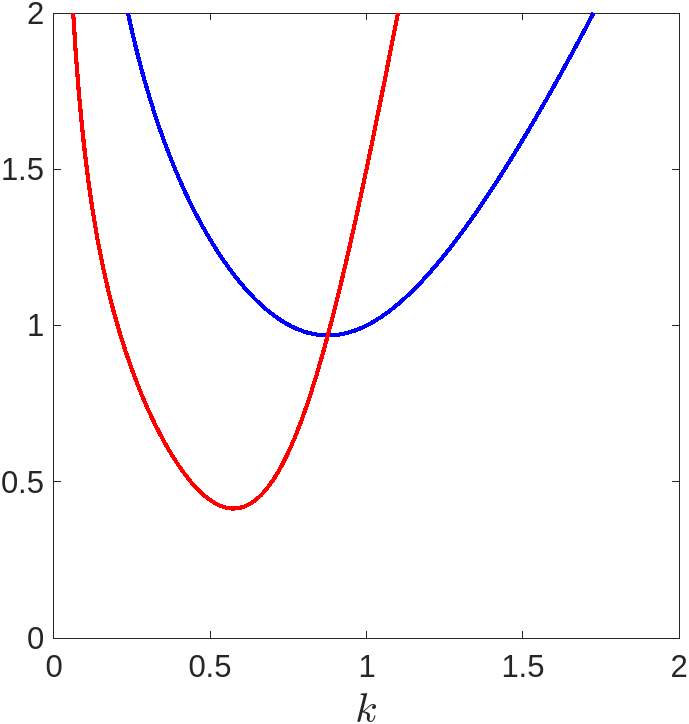}}
\hfill
\subfloat[]{\includegraphics[width=.33\linewidth]{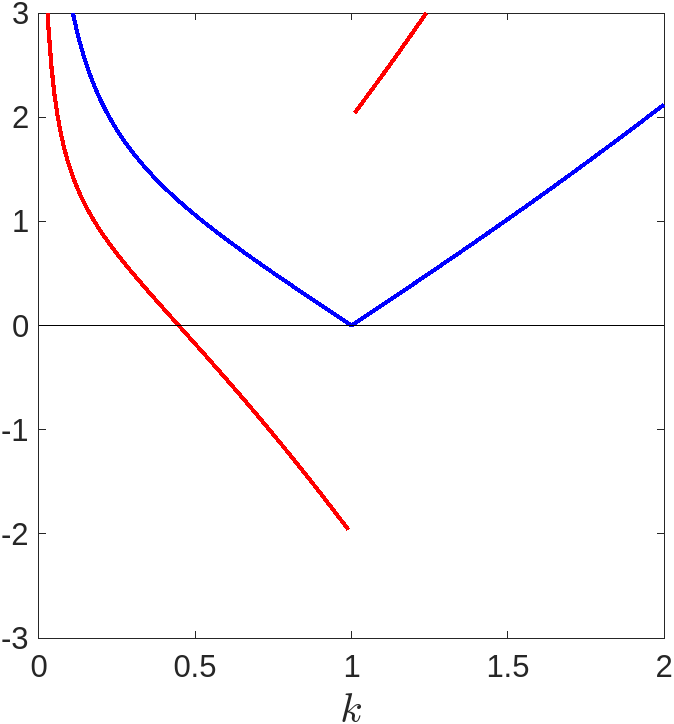}}
\hfill
\subfloat[]{\includegraphics[width=.33\linewidth]{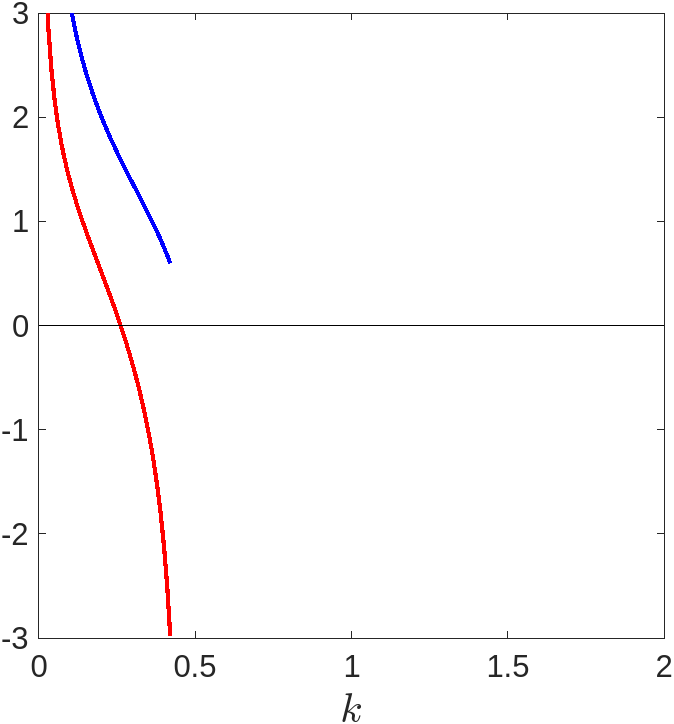}}
\caption{Phase speed $c(k)$ (blue) and group speed $c_g(k)$ (red) as functions of $k$ 
for (a) $\P = 1$, (b) $\P = 2$, (c) $\P = 5$.}
\label{coeff-graph}
\end{figure}

\begin{figure}
\centering
\includegraphics[width=.5\linewidth]{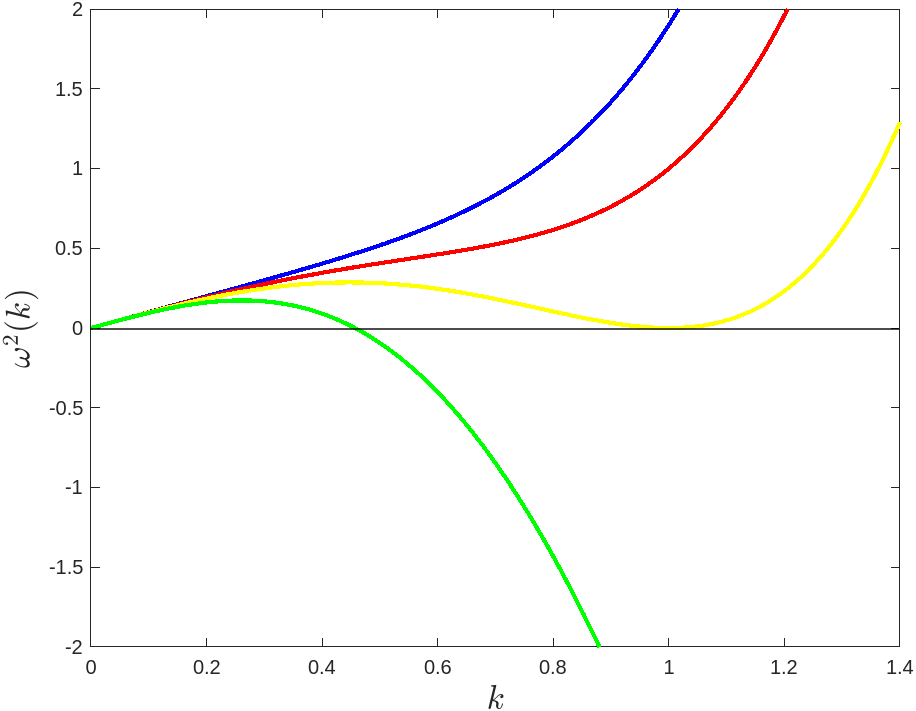}
\caption{Linear dispersion relation $\omega^2(k)$ as a function of $k$ 
for $\P = 0.1$ (blue), $\P = 1$ (red), $\P = 2$ (yellow), $\P = 5$ (green).}
\label{figure-omega2-k}
\end{figure}

\clearpage

\begin{figure}
\centering
\includegraphics[width=.4\linewidth]{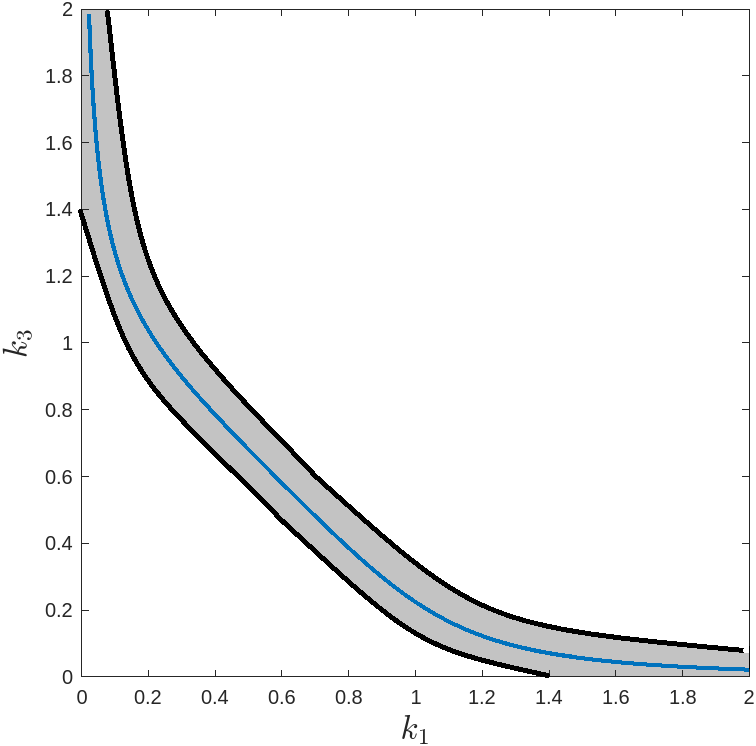}
\caption{Solution curve $\mathcal{C}^+$ (blue curve) for positive roots $(k_1, k_3)$ of \eqref{tilde-d13=0}
and its neighborhood $\mathcal{C}_\mu^+$ (gray area) for $\P = 1$.}
\label{figure-resonant-triads}
\end{figure}

\begin{figure}
\centering
\subfloat[]{\includegraphics[width=.45\linewidth]{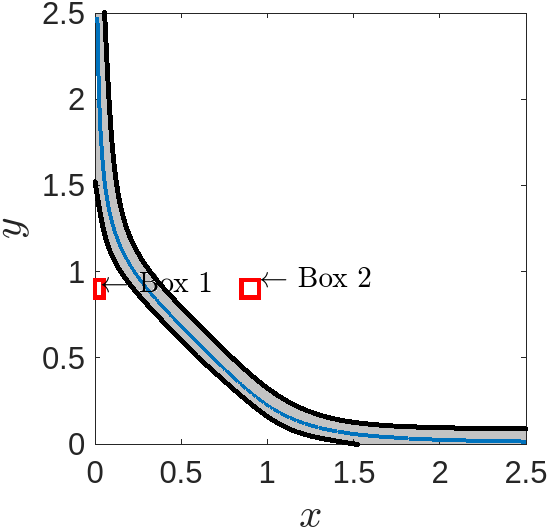}}
\hfill
\subfloat[]{\includegraphics[width=.45\linewidth]{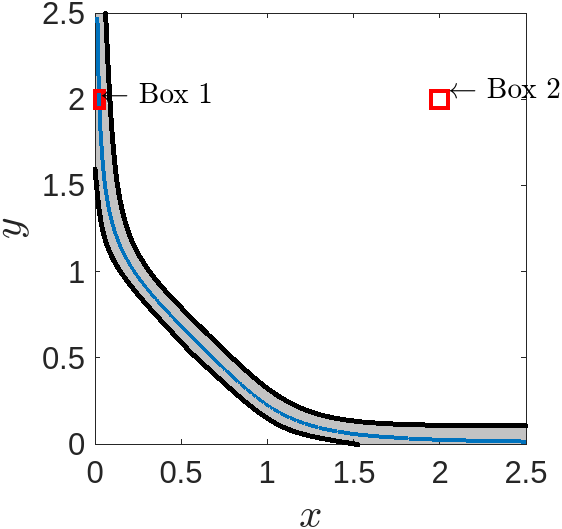}}
\caption{Location of points $(k_1, k_2)$ and $(k_4-k_2, k_2)$ with $k_4-k_2 >0$ 
in the case (a) $k_0 = 0.9$ and (b) $k_0=2$ relative to the neighborhood $\mathcal{C}_\mu$. 
Box 1 represents the set $\mathcal{B}_s (k_0)$ and Box 2 represents the set $\mathcal{B}_c (k_0)$.}
\label{fig:assumptions}
\end{figure}

\clearpage

\begin{figure}
\centering
\includegraphics[width=.5\linewidth]{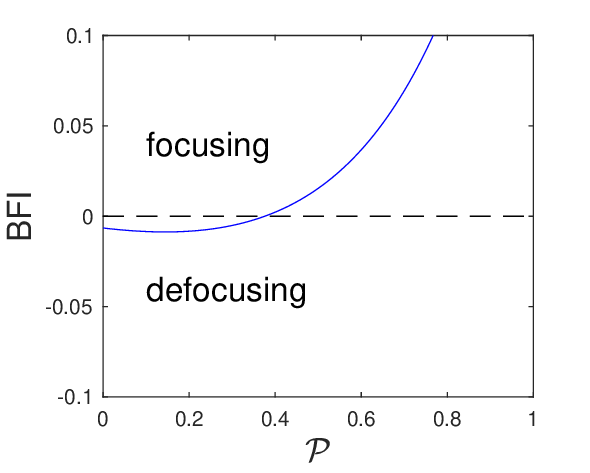}
\caption{BF instability/stability criterion at $k_{\min}$ for the NLS equation as a function of $\P$.}
\label{BFI_nls}
\end{figure}

\clearpage

\begin{figure}
\centering
\subfloat[]{\includegraphics[width=.5\linewidth]{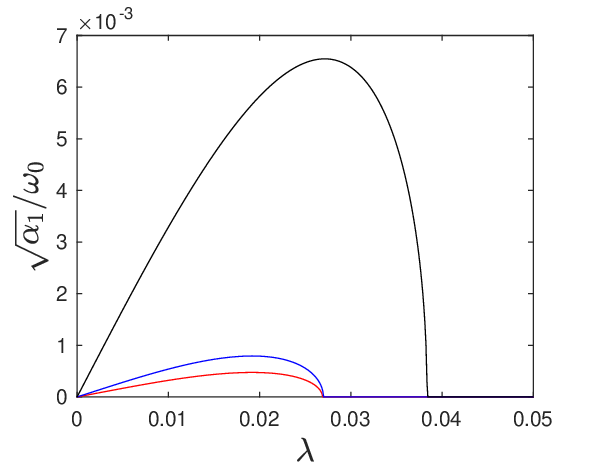}}
\hfill
\subfloat[]{\includegraphics[width=.5\linewidth]{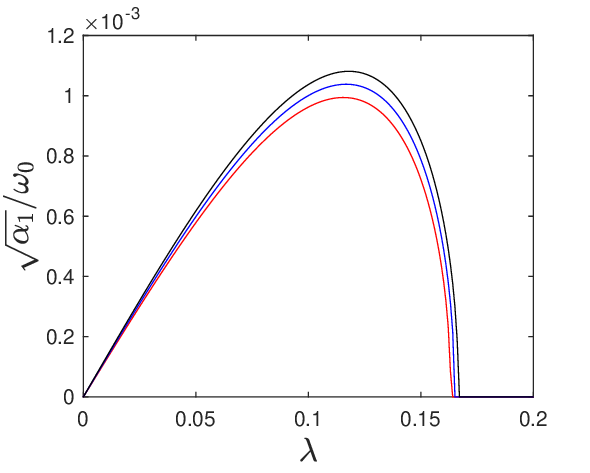}}
\caption{Regions of BF instability according to \eqref{BFCond} for
(a) $(A_0,k_0) = (0.1,0.9)$ and (b) $(A_0,k_0) = (0.01,5)$.
The various curves represent $\P = 0$ (red), $\P = 1$ (blue), $\P = 1.9$ (black).}
\label{BF_inst}
\end{figure}

\begin{figure}
\centering
\subfloat[]{\includegraphics[width=.5\linewidth]{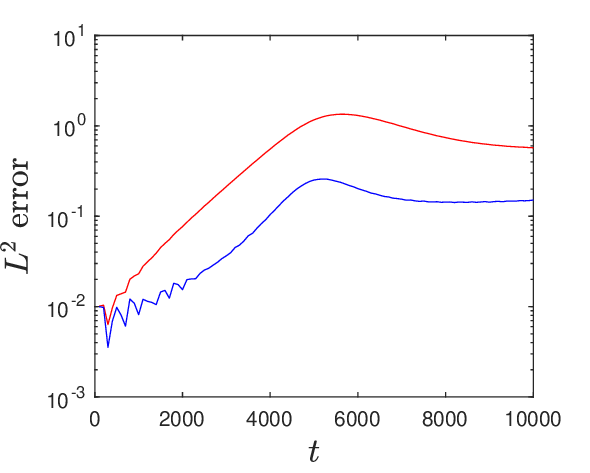}}
\hfill
\subfloat[]{\includegraphics[width=.5\linewidth]{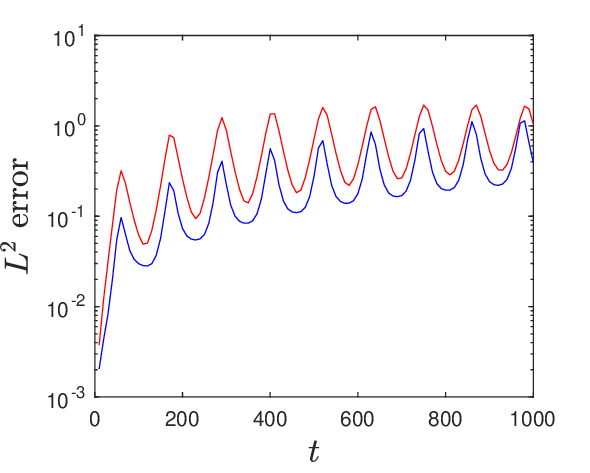}}
\caption{Relative errors on $\eta$ between fully and weakly nonlinear solutions for $\P = 1$
with (a) $(A_0,k_0,\lambda) = (0.1,0.9,0.02)$ and (b) $(A_0,k_0,\lambda) = (0.01,5,0.1)$.
Blue curve: Dysthe equation.
Red curve: NLS equation.}
\label{L2err_k09_s1_N1024}
\end{figure}

\clearpage

\begin{figure}
\centering
\subfloat[]{\includegraphics[width=.5\linewidth]{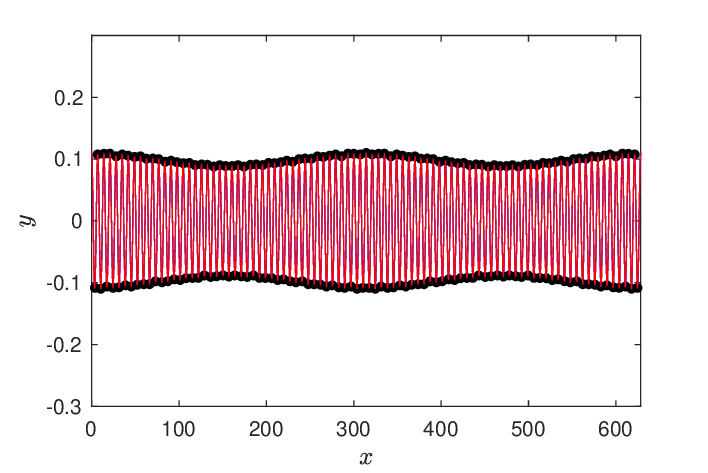}}
\hfill
\subfloat[]{\includegraphics[width=.5\linewidth]{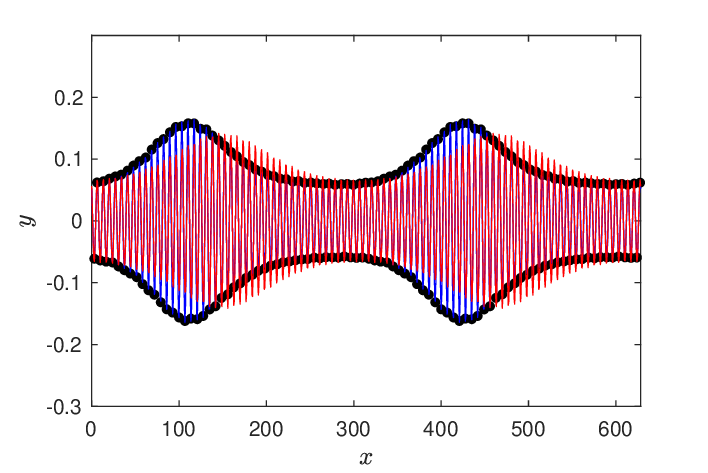}}
\hfill
\subfloat[]{\includegraphics[width=.5\linewidth]{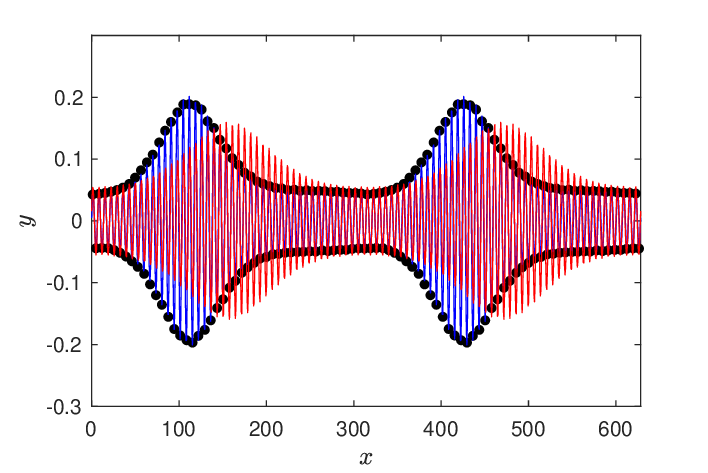}}
\hfill
\subfloat[]{\includegraphics[width=.5\linewidth]{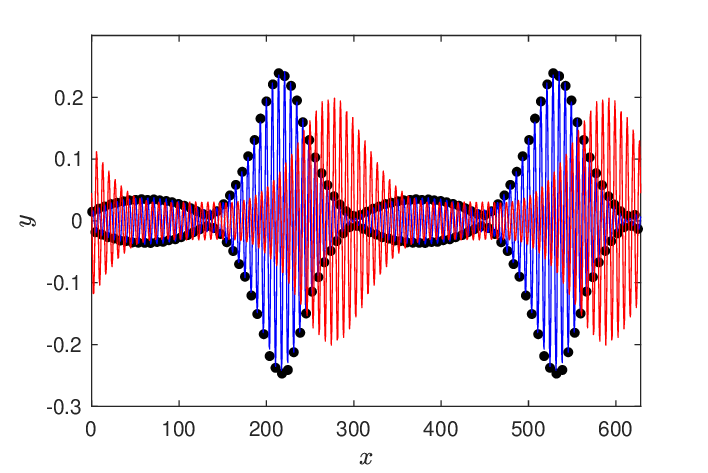}}
\hfill
\subfloat[]{\includegraphics[width=.5\linewidth]{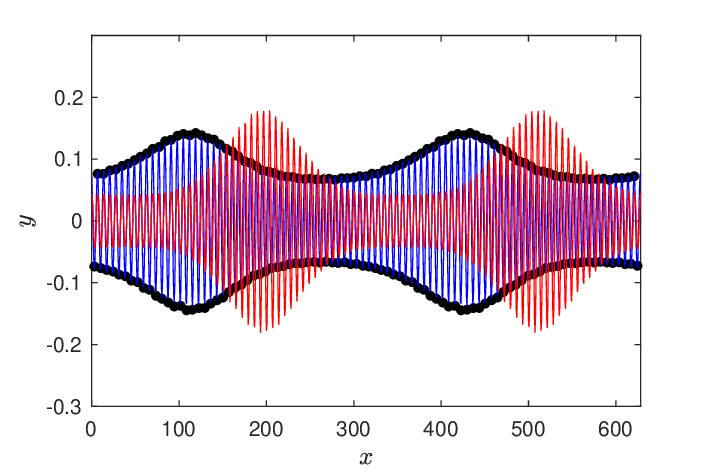}}
\hfill
\subfloat[]{\includegraphics[width=.5\linewidth]{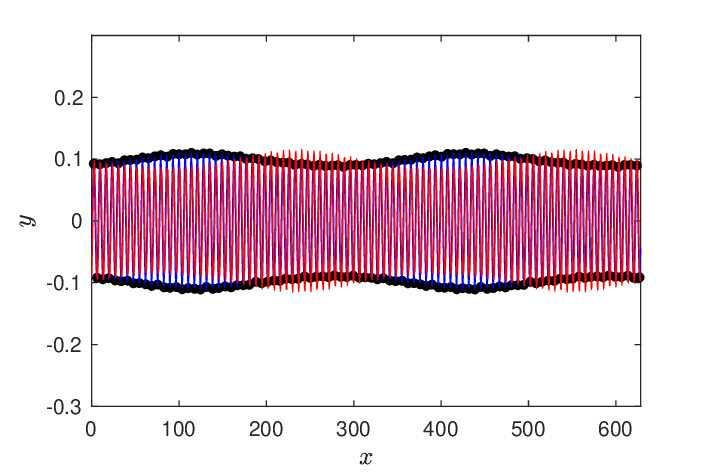}}
\caption{Comparison on $\eta$ between fully and weakly nonlinear solutions
for $(A_0,k_0,\lambda) = (0.1,0.9,0.02)$ and $\P = 1$ at 
(a) $t = 0$, (b) $t = 3400$, (c) $t = 4000$, (d) $t = 5000$, (e) $t = 7000$, (f) $t = 10000$.
Blue curve: Dysthe equation.
Red curve: NLS equation.
Black dots: Euler system.}
\label{wave_a01_k09_K2_s1_N1024}
\end{figure}

\clearpage

\begin{figure}
\centering
\subfloat[]{\includegraphics[width=.5\linewidth]{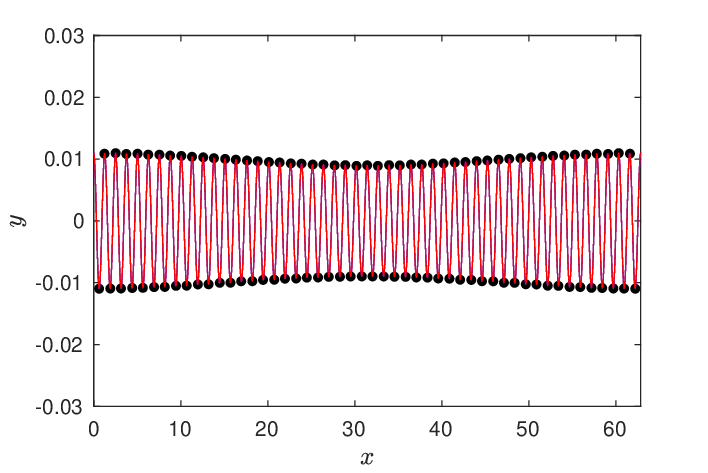}}
\hfill
\subfloat[]{\includegraphics[width=.5\linewidth]{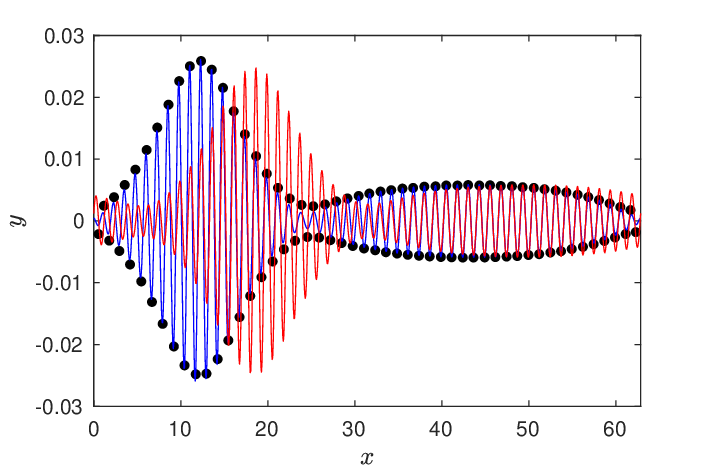}}
\hfill
\subfloat[]{\includegraphics[width=.5\linewidth]{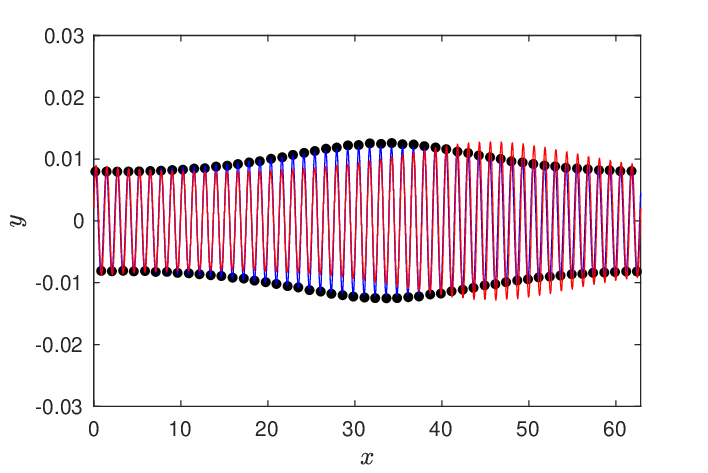}}
\hfill
\subfloat[]{\includegraphics[width=.5\linewidth]{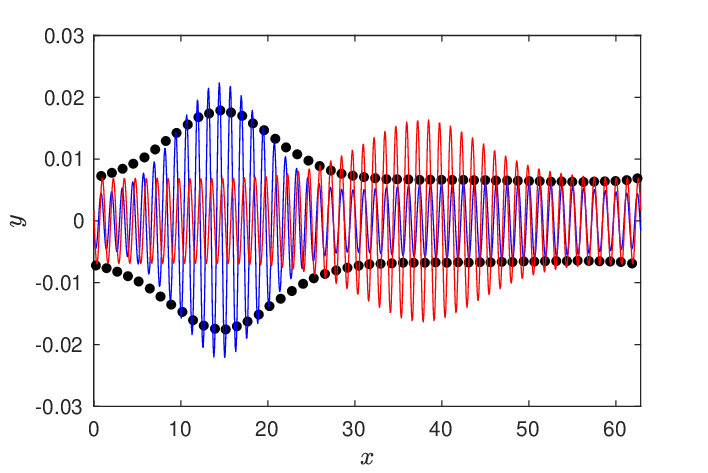}}
\hfill
\subfloat[]{\includegraphics[width=.5\linewidth]{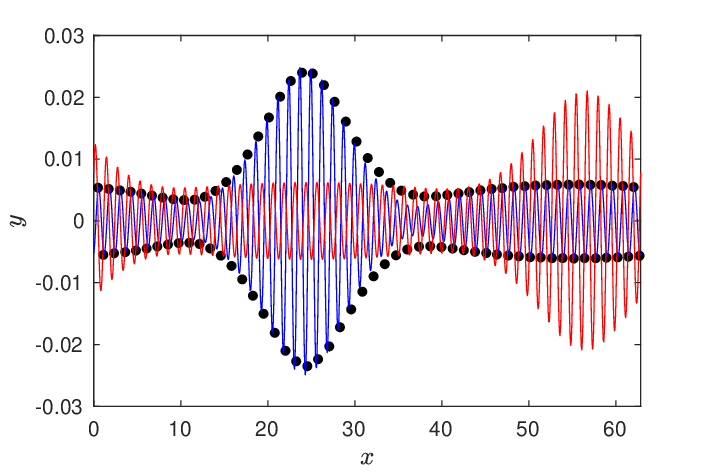}}
\hfill
\subfloat[]{\includegraphics[width=.5\linewidth]{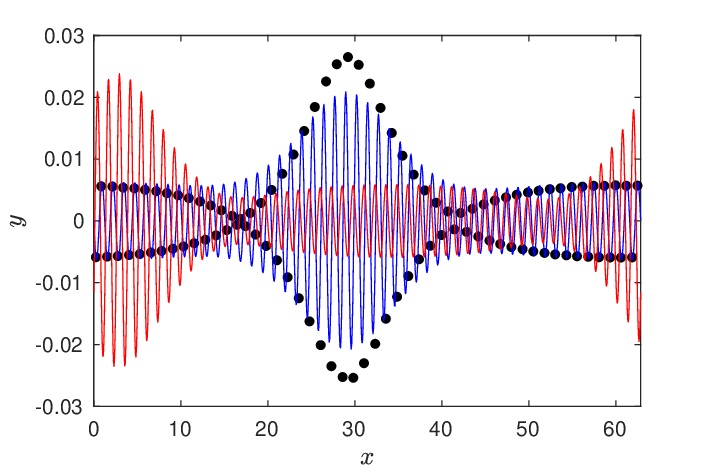}}
\caption{Comparison on $\eta$ between fully and weakly nonlinear solutions
for $(A_0,k_0,\lambda) = (0.01,5,0.1)$ and $\P = 1$ at 
(a) $t = 0$, (b) $t = 170$, (c) $t = 320$, (d) $t = 620$, (e) $t = 860$, (f) $t = 980$.
Blue curve: Dysthe equation.
Red curve: NLS equation.
Black dots: Euler system.}
\label{wave_a001_k5_K1_s1_N1024}
\end{figure}

\clearpage

\begin{figure}
\centering
\includegraphics[width=.5\linewidth]{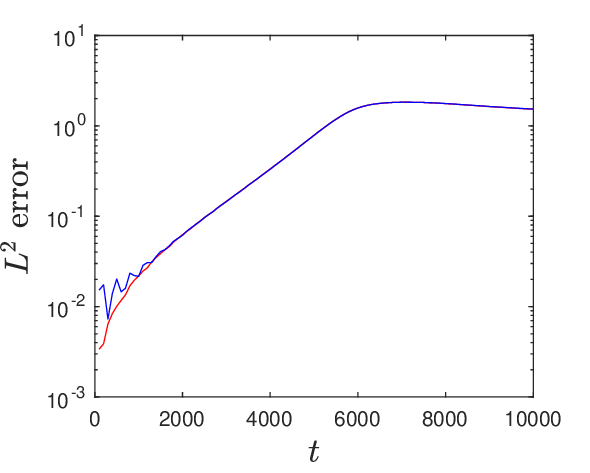}
\caption{Relative errors on $\eta$ between fully and weakly nonlinear solutions for $\P = 0$
with $(A_0,k_0,\lambda) = (0.1,0.9,0.02)$.
Blue curve: our NLS equation.
Red curve: NLS equation from \cite{TMPV18}.}
\label{L2nls_k09_s0_N1024}
\end{figure}

\begin{figure}
\centering
\subfloat[]{\includegraphics[width=.5\linewidth]{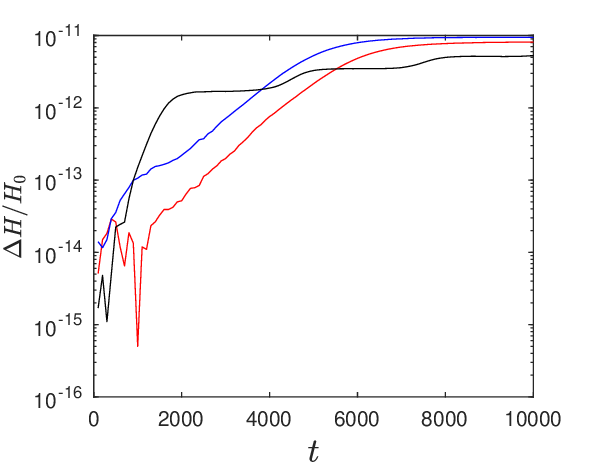}}
\hfill
\subfloat[]{\includegraphics[width=.5\linewidth]{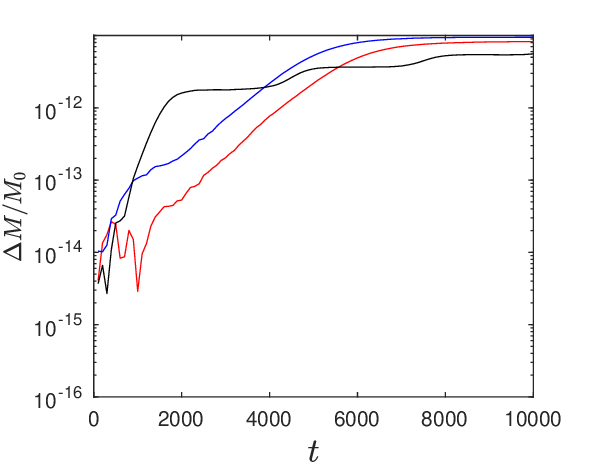}}
\caption{Relative errors on (a) $H$ and (b) $M$ for the Dysthe equation with $(A_0,k_0,\lambda) = (0.1,0.9,0.02)$
and $\P  = 0$ (red), $\P = 1$ (blue), $\P = 1.9$ (black).}
\label{ener_a01_k09_K2_N1024}
\end{figure}

\end{document}